\newtheorem{theorem}{Theorem}[section]
\newtheorem{lemma}{Lemma}[section]
\newtheorem{prop}{Proposition}[section]
\newtheorem{remark}{Remark}
\newcommand{\weak}{\stackrel{w}{\longrightarrow}}
\newcommand{\prob}{\stackrel{P}{\longrightarrow}}
\newcommand{\eid}{\stackrel{d}{=}}
\newcommand{\one}{{\bf 1}}
\newcommand{\reals}{{\mathbb R}}
\newcommand{\bbr}{\reals}
\newcommand{\vep}{\varepsilon}
\newcommand{\bbz}{\protect{\mathbb Z}}
\newcommand{\bbn}{\protect{\mathbb N}}
\newcommand{\bbc}{\protect{\mathbb C}}
\def\Var{{\rm Var}}
\DeclareMathOperator{\Tr}{Tr} 
\numberwithin{equation}{section}
\begin{document} 
\bibliographystyle{abbrvnat}

\title[Wigner matrices with dependent entries]{Limiting spectral
  distribution for Wigner matrices with dependent entries}

\author[A. Chakrabarty]{Arijit Chakrabarty}
\address{Theoretical Statistics and Mathematics Unit, Indian Statistical Institute, New Delhi}
\email{arijit@isid.ac.in}

\author[R. S. Hazra]{Rajat Subhra Hazra}
\address{Institut f\"ur Mathematik,Universit\"at  Z\"urich, Z\"urich}
\email{rajatmaths@gmail.com}

\author[D. Sarkar]{Deepayan Sarkar}
\address{Theoretical Statistics and Mathematics Unit, Indian Statistical Institute, New Delhi}
\email{deepayan@isid.ac.in}

\keywords{Random matrix,  semicircular law, non-crossing partitions, Stieltjes transform, linear process, free multiplicative convolution}

\subjclass[2010]{Primary 60B20; Secondary 60B10, 46L53}

\begin{abstract}
  In this article we show the existence of limiting spectral distribution of
  a symmetric random matrix whose entries come from a stationary Gaussian
  process with covariances satisfying a summability condition. We
  provide an explicit description of the moments of the limiting
  measure.  We also show that in some special cases the Gaussian assumption
  can be relaxed.  The description of the limiting measure can also be
  made via its Stieltjes transform which is characterized as the
  solution of a functional equation. In two special cases, we get a
  description of the limiting measure - one as a free product
  convolution of two distributions, and the other one as a dilation of
  the Wigner semicircular law.
\end{abstract}

\maketitle

\section{Introduction}
\label{sec:intro}


In his seminal paper, \citet{wig58} showed that for a symmetric random
matrix with independent on and off diagonal entries satisfying some
moment conditions, the empirical spectral distribution (henceforth
ESD) converges to the Wigner semicircle law (defined in
\eqref{example.eq2}, henceforth WSL).  Subsequent work has tried to
obtain a better understanding of the spectrum of such matrices, which
plays an important role in physics as well as other branches of
mathematics such as operator algebras.  Recently, there has been
interest in how far the independence assumption and the moment
conditions can be relaxed.  The reader may refer to the recent review
article by \citet{BA2011} and the references therein for an overview
of currently available results.

Relaxation of the independence assumption has been investigated by
\citet{ KM2008, RTWR, GT,cha}.  \cite{OE2012, adam}, and
\citet{HLN} have studied the sample covariance matrix imposing some
dependence on the rows and columns.  However, the limiting spectral
distributions (henceforth LSD) obtained by considering symmetric
matrices with the independence assumption weakened have stayed within
the WSL regime for the most part.  One exception is
\cite{AZ}, who considered the LSD of Wigner matrices where on and off
diagonal elements form a finite-range dependent random field; in
particular, the entries are assumed to be independent beyond a finite
range, and within the finite range the correlation structure is given
by a kernel function.

\subsection*{Motivation} 

We begin with a few examples to motivate the problem studied in this
article.  In each of the following examples, a random field
$\{Z_{i,j}:i,j\ge1\}$ is developed. For $n \ge 1$, let $A_n$ be the
$n\times n$ matrix whose $(i,j)$-th entry is $Z_{i\wedge j,i\vee
  j}$. The question is whether the ESD of $A_n/\sqrt n$ converges as
$n\to\infty$, and if so, where.

\noindent{\bf Example 1.} Let $\{Z_{i,j}:i,j\ge1\}$ be a mean zero
Gaussian process such that
$E\left[Z_{i,j}Z_{i+k,j+l}\right]=\rho^{|k|+|l|}$ for integers
$i,j,k,l$ such that $i,j,i+k,j+l \ge 1$,
where $|\rho|<1$ is fixed. This process can be thought of as a ``two
dimensional AR($1$) process'', because $\{Z_{i,j}:j\ge1\}$ is an
AR($1$) process for fixed $i$, as is $\{Z_{i,j}:i\ge1\}$ for fixed
$j$.

\noindent{\bf Example 2.} Assume that $\{G_{i,j}:i,j\ge1\}$ are
i.i.d. standard Gaussian random variables, and $N$ is a fixed positive
integer. Define
\[
 Z_{i,j}:=\sum_{k=0}^N\sum_{l=0}^NG_{i+k,j+l},\,i,j\ge1\,.
\]

\noindent{\bf Example 3.} Suppose that $(G_n:n\in\bbz)$ is a mean zero
variance one stationary Gaussian process. Let $(G_n^{i}:n\in\bbz)$ be
i.i.d. copies of $(G_n:n\in\bbz)$ for $i=\ldots,-2,-1,0,1,2,\ldots$.
Set $
Z_{i,j}:=G^{i-j}_{i},\,i,j\in\bbz\,.$


\noindent{\bf Example 4.} Let $\{c_{k,l}\}$ be real numbers such that
\begin{eqnarray*}
\sum_{k=-\infty}^\infty\sum_{l=-\infty}^\infty c_{k,l}^2&<&\infty\,,\\
c_{k,l}&=&c_{l,k}\mbox{ for all }k,l\in\bbz\,,\\
 \sum_{l=-\infty}^\infty c_{k,l}c_{k^\prime,l}&=&0\mbox{ for all }k\neq k^\prime\,.
\end{eqnarray*}
 As in Example 2, let  $\{G_{i,j}:i,j\ge1\}$ be i.i.d. standard Gaussian random variables. Define
\[
 Z_{i,j}:=\sum_{k,l\in\bbz}c_{k,l}G_{i-k,j-l},\,\,i,j\in\bbz\,.
\]

It is shown later in Section \ref{sec:example} that for Examples 1 and 2, the LSD of $A_n/\sqrt n$
is the free product convolution of the WSL with a distribution
supported on a compact subset of $[0,\infty)$, and for Examples 3
(under the additional assumption that
$\sum_{n=1}^\infty|E(G_0G_n)|<\infty$) and 4, the LSD is a dilation of
the WSL.  To the best of our understanding, Example 2 is the only one
of the above examples where the result follows from the work of
\cite{AZ}, because that is the only example where two entries are
independent if their distance is above a threshold.

Motivated by these examples, this article considers a random matrix
model where on and off diagonal entries form a stationary Gaussian
field, with the covariance of the entries satisfying a summability
condition.  
It is shown using the method of moments that the ESD
converges to a non-degenerate measure.  The combinatorial approach we
have adopted for calculating the traces of powers of the matrices
avoids the use of independence in any stage.
Unlike \cite{AZ} where the use of independence
facilitates the negligibility of certain partitions, sharper estimates
are needed on a class of partitions.  These sharper estimates on the
set of partitions and Wick's formula are used to derive the limiting
moments. The assumption of Gaussianity, although important in the proof, is relaxed
to allow for a fairly general class of input sequences using the
Lindeberg type argument developed in \citet{chatterjee:2005}.  An
interpretation of the limiting moments in terms of functions of
non-crossing pair partitions is used to derive the Stieltjes transform
of the measure.  The form of the Stieltjes transform indicates a
relationship with operator-valued semicircular variables studied in
\citet{speiO} (for the application of free probability to random
matrices, see the recent review by \citet{spei-review}).  



\subsection*{Outline of our contribution}

Let $(Z_{i,j}:i,j\in\bbz)$ be a stationary, mean zero, variance one
Gaussian process. Stationarity here means that for $k,l\in\bbz$,
\[
 (Z_{i+k,j+l}:i,j\in\bbz)\eid (Z_{i,j}:i,j\in\bbz)\,.
\]
For $i,j\ge1$, set
\[
 X_{i,j}:=Z_{i\wedge j,i\vee j}\,,
\]
and let 
\begin{equation}\label{gaussian.defA_n}
 A_n:=((X_{i,j}))_{n\times n},\,n\ge1\,.
\end{equation}
Let $\lambda_1\le\ldots\le\lambda_n$ be the eigenvalues of $A_n$,
which are real because $A_n$ is symmetric, and denote
\begin{equation}\label{gaussian.defmu_n}
 \mu_n:=\frac1n\sum_{i=1}^n\delta_{\{\lambda_i/\sqrt n\}}\,.
\end{equation}
The main result of this article is Theorem~\ref{gaussian.t1}, stated
in Section \ref{sec:mainresult} along with an outline of the proof,
which gives a set of conditions on the covariance of $\{X_{i,j}\}$
under which the ESD $\mu_n$ converges weakly in probability. In
Section \ref{sec:combinatorics}, some combinatorial results are
proven, which are used in Section \ref{sec:gaussian} for the proof of
Theorem~\ref{gaussian.t1}. In Section \ref{sec:inv}, we show that by
specializing on an infinite order moving average process with
independent inputs satisfying the Pastur condition,
Theorem~\ref{gaussian.t1} and an invariance principle can be used to
establish the convergence of the ESD.  In Section \ref{sec:stieltjes},
an explicit description of the Stieltjes transform is provided using
the moment formula and some properties of the Kreweras complement.
In Section \ref{sec:example}, two explicit examples are
described where we get better descriptions of the limit: Theorem
\ref{example.t1} gives conditions under which the LSD is the free
multiplicative convolution of the WSL and another distribution.
Theorem \ref{example.t2} gives conditions under which the LSD is the
WSL. Finally, in Section \ref{sec:ext}, Theorem \ref{example.t1} is extended to entail the case where the correlations are not necessarily summable. That assumption is replaced there by the weaker assumption of absolute continuity of the spectral measure.

\section{The main result}\label{sec:mainresult} 

In this section, we state the main result, and give an outline of the
proof. Let the $n\times n$ random symmetric matrix $A_n$ be as in
\eqref{gaussian.defA_n}, and set $\mu_n$ to be ESD of $A_n/\sqrt n$, as defined in \eqref{gaussian.defmu_n}.
Before stating the main result, we need a few more notations and
assumptions.  Define
\begin{equation}\label{gaussian.defR}
 R(u,v):=E\left[Z_{0,0}Z_{-u,v}\right],\,u,v\in\bbz\,.
\end{equation}
%

The assumptions are the following.\\
{\bf Assumption 1:} $R(\cdot,\cdot)$ is symmetric, that is,
\begin{equation}\label{eq.symmetry}
 R(u,v)=R(v,u)\mbox{ for all }u,v\in\bbz\,.
\end{equation}
{\bf Assumption 2:} $R(\cdot,\cdot)$ is absolutely summable, that is,
\begin{equation}\label{eq.defrbar}
 \bar R:=\sum_{u,v\in\bbz}|R(u,v)|<\infty\,.
\end{equation}

An immediate consequence of Assumption 1 and stationarity is that
\begin{equation}\label{gaussian.eq13}
 R(u,v)=R(-v,-u),\,u,v\in\bbz\,.
\end{equation}

A consequence of Assumption 2 is the following.  Fix $\sigma\in
NC_2(2m)$, the set of non-crossing pair partitions of
$\{1,\ldots,2m\}$. Let $(V_1,\ldots,V_{m+1})$ denote the Kreweras
complement of $\sigma$, which is the maximal partition
$\overline\sigma$ of $\{\overline 1,\ldots,\overline {2m}\}$ such that
$\sigma\cup\overline\sigma$ is a non-crossing partition of
$\{1,\overline1,\ldots,2m,\overline{2m}\}$. For $1\le i\le m+1$,
denote
\begin{equation}\label{eq.defV}
 V_i:=\{v_1^i,\ldots,v_{l_i}^i\}\,.
\end{equation}
Define
\begin{equation}\label{eq.defS}
 S(\sigma):=\left\{(k_1,\ldots,k_{2m})\in\bbz^{2m}:\sum_{j=1}^{l_s}k_{v^s_j}=0,\,s=1,\ldots,m+1\right\}\,.
\end{equation}
If $\sigma=\{(u_1,u_{m+1}),\ldots,(u_m,u_{2m})\}$, then notice that
\begin{eqnarray}
\nonumber&& \sum_{(k_1,\ldots,k_{2m})\in S(\sigma)}\prod_{(u,v)\in\sigma}|R(k_u,k_v)|
\end{eqnarray}
\begin{eqnarray}
\nonumber&=&\sum_{i\in\bbz^{2m}}\Biggl[\#\Bigl\{k\in S(\sigma):\mbox{ there exists a permutation }\pi\mbox{ of }\{1,\ldots,m\}\\
\nonumber&&\,\,\,\mbox{ such that }k_{u_{j}}=i_{\pi(j)}\mbox{ and }k_{u_{m+j}}=i_{m+\pi(j)},\,j=1,\ldots,m\Bigr\}\\
\nonumber&&\,\,\,\,\,\,\,\,\,\,\,\,\prod_{j=1}^m|R(i_j,i_{m+j})|\Biggr]\\
\nonumber&\le&\sum_{i\in\bbz^{2m}}\Biggl[m!\prod_{j=1}^m|R(i_j,i_{m+j})|\Biggr]\\
\label{eq.ub}&=&m!{\bar R}^m<\infty\,.
\end{eqnarray}
In view of the above calculation, it makes sense to define
\begin{equation}\label{eq.defbeta}
 \beta_{2m}:=\sum_{\sigma\in NC_2(2m)}\sum_{k\in S(\sigma)}\prod_{(u,v)\in\sigma}R(k_u,k_v),\,m\ge1\,.
\end{equation}

The main result of this article is the following.

\begin{theorem}\label{gaussian.t1}
  Under Assumption 1 and Assumption 2, $\mu_n$ converges weakly in
  probability to a distribution $\mu$. The $k$-th moment of $\mu$ is
  zero if $k$ is odd, and $\beta_{k}$ if $k$ is even. Furthermore,
  $\mu$ is uniquely determined by its moments, that is, if a
  distribution has the same moments as that of $\mu$, then the
  distribution equals $\mu$.
\end{theorem}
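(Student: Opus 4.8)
The plan is a method-of-moments argument. Writing $m_k(\nu)$ for the $k$-th moment of a measure $\nu$ and using
\[
\int x^k\,d\mu_n=\frac1n\sum_{i=1}^n\Bigl(\frac{\lambda_i}{\sqrt n}\Bigr)^k=\frac{1}{n^{1+k/2}}\Tr(A_n^k),
\]
it suffices to establish three facts: (i) $E\!\int x^k\,d\mu_n\to0$ for odd $k$ and $\to\beta_k$ for even $k$; (ii) $\Var\bigl(\int x^k\,d\mu_n\bigr)\to0$ for every $k$; and (iii) the moment sequence $(\beta_{2m})_{m\ge1}$ determines a unique probability measure. Facts (i) and (ii) give convergence of every moment in probability (via Chebyshev), and combined with (iii) the standard method-of-moments theorem yields $\mu_n\weak\mu$ in probability with the asserted moments. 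Fact (iii) is immediate from the bound $\beta_{2m}\le m!\,\bar R^{\,m}$ already obtained in \eqref{eq.ub}: by Stirling's formula $\beta_{2m}^{1/(2m)}=O(\sqrt m)$, so $\sum_m\beta_{2m}^{-1/(2m)}=\infty$ and Carleman's condition holds.

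For (i) I would expand the trace over closed walks,
\[
E\Tr(A_n^k)=\sum_{i_1,\ldots,i_k=1}^n E\bigl[X_{i_1i_2}X_{i_2i_3}\cdots X_{i_ki_1}\bigr]
\]
(indices read cyclically), and apply Wick's formula to the jointly Gaussian factors. If $k$ is odd there are no pair partitions of $\{1,\ldots,k\}$, so every summand vanishes and the odd moments are $0$. If $k=2m$, Wick's formula expresses each summand as $\sum_{\pi}\prod_{(a,b)\in\pi}E\bigl[X_{i_ai_{a+1}}X_{i_bi_{b+1}}\bigr]$, the sum being over all pair partitions $\pi$ of $\{1,\ldots,2m\}$. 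By stationarity together with $X_{i,j}=Z_{i\wedge j,\,i\vee j}$, each pairwise covariance equals a value of $R$ depending only on differences of the walk indices, so every summand becomes a product of $R$-values.

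The core of the proof is to identify the leading pairings; I would show that only non-crossing pairings survive the normalization $n^{-(m+1)}$. For $\sigma\in NC_2(2m)$ the associated closed walk is a tree traversed twice, and the Kreweras complement of $\sigma$ organizes the walk indices into $m+1$ groups which, to leading order, range freely over $\{1,\ldots,n\}$ (supplying the factor $n^{m+1}$), while the within-group displacements form a vector $k\in S(\sigma)$; summing the covariance product $\prod_{(u,v)\in\sigma}R(k_u,k_v)$ over $k\in S(\sigma)$ converges by Assumption 2 and, after division by $n^{m+1}$, reproduces exactly the $\sigma$-term of \eqref{eq.defbeta}. Summing over $\sigma$ gives $\beta_{2m}$. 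The main obstacle is the negligibility of crossing pairings: in the independent Wigner setting these vanish because distinct edges are uncorrelated, but here one must prove that every crossing pairing forces strictly fewer than $m+1$ free groups, so that---even after bounding the ensuing covariance sums by factors of $\bar R$ via Assumption 2---its contribution is $o(n^{m+1})$. Establishing these sharp counting estimates on the set of pair partitions is the technical heart of the argument and would be carried out via the combinatorial lemmas developed below.

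Finally, for (ii) I would apply Wick's formula to $E\bigl[(\Tr A_n^k)^2\bigr]$, now a sum over pair partitions of the $2k$ Gaussian factors arising from two superimposed closed walks. Partitions that pair factors within each walk separately reconstitute $\bigl(E\Tr A_n^k\bigr)^2$; the remaining partitions, which link the two walks, are controlled by the same counting-and-summability estimates and shown to contribute $o(n^{k+2})$. Hence $\Var\bigl(n^{-(1+k/2)}\Tr A_n^k\bigr)\to0$, completing the verification of (ii) and the proof.
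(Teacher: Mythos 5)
Your proposal follows essentially the same route as the paper: method of moments with Wick's formula, reduction to non-crossing pairings via sharp counting estimates, the Kreweras-complement parametrization producing $n^{m+1}$ free indices and the displacement set $S(\sigma)$, uniqueness via Carleman's condition from the bound $\beta_{2m}\le m!\,\bar R^{\,m}$, and a trace-squared computation over two superimposed walks for the variance. The parts you defer to ``combinatorial lemmas developed below'' are exactly the paper's Section \ref{sec:combinatorics} (offspring and compound offspring words, showing crossing or walk-linking pairings admit at most $m$, respectively $2m+1$, free indices even with nonzero prescribed displacements) together with Lemma \ref{gaussian.l8}, which identifies each pairwise covariance with $R(k_u,k_v)$ despite the $\wedge/\vee$ in $X_{i,j}=Z_{i\wedge j,i\vee j}$ and is where Assumption 1 enters; your plan correctly anticipates where these are needed.
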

\begin{remark}
 As is common in the literature, the phrase ``$\mu_n$ converges weakly in probability to a distribution $\mu$'' means that
\[
 L\left(\mu_n,\mu\right)\prob0\,,
\]
as $n\to\infty$, where $L$, the L\'evy distance, is defined by
\begin{equation}\label{eq.deflevy}
 L(\nu_1,\nu_2):=\inf\Bigl\{\vep>0:\nu_1\left((-\infty,x-\vep]\right)-\vep\le\nu_2\left((-\infty,x]\right)\le
\end{equation}
\[
 \nu_1\left((-\infty,x+\vep]\right)+\vep\mbox{ for all }x\in\bbr\Bigr\}\,,
\]
for probability measures $\nu_1,\nu_2$ on $\bbr$.
\end{remark}

We end this section with a brief outline of the proof of the above result. As is standard in a proof by the method of moments, what needs to be shown is that for fixed $m\ge1$,
\begin{equation}\label{main.eq1}
 \lim_{n\to\infty}n^{-(m+1)}\sum_{i_1,\ldots,i_{2m}=1}^nE\left[X_{i_1,i_2}\ldots X_{i_{2m-1},i_{2m}}X_{i_{2m},i_1}\right]=\beta_{2m}\,.
\end{equation}
As in the proof of the classical Wigner's result, the first step is to get rid of the ``non-pair matched'' tuples $i=(i_1,\ldots i_{2m})$ in the above sum. Fix $N\ge1$, and say that a tuple $i\in\{1,\ldots,n\}^{2m}$ is $N$-pair matched if there exists a pair partition $\pi$ of $\{1,\ldots,2m\}$ such that for all $(u,v)\in\pi$,
\[
 \left|i_{u-1}\wedge i_u-i_{v-1}\wedge i_v\right|\vee\left|i_{u-1}\vee i_u-i_{v-1}\vee i_v\right|\le N\,,
\]
with the convention $i_0:=i_{2m}$. It needs to be shown that if $C_{N,n}$ denotes the set of tuples in $\{1,\ldots,n\}^{2m}$ which are not $N$-pair matched, then
\begin{equation}\label{main.eq2}
 \lim_{N\to\infty}\limsup_{n\to\infty}n^{-(m+1)}\left|\sum_{i\in C_{N,n}}^nE\left[X_{i_1,i_2}\ldots X_{i_{2m-1},i_{2m}}X_{i_{2m},i_1}\right]\right|=0\,.
\end{equation}
Unlike in the classical Wigner's result, this is a non-trivial step in
our situation because not only does the above sum not vanish for $N$
large, even showing that the expectation in modulus is less than some
$\vep$ is not enough because $\#C_{N,n}\sim n^{2m}$ as $n\to\infty$,
and the sum is scaled only by $n^{(m+1)}$. This is precisely the step
where Assumption 2 plays an important role.

Once \eqref{main.eq2} is established, what remains to be shown for \eqref{main.eq1} is that for fixed $N$,
\begin{equation}\label{main.eq3}
 \lim_{n\to\infty}n^{-(m+1)}\sum_{i\in C_{N,n}^c}^nE\left[X_{i_1,i_2}\ldots X_{i_{2m-1},i_{2m}}X_{i_{2m},i_1}\right]
\end{equation}
\[
 =\sum_{\sigma\in NC_2(2m)}\,\sum_{k\in S(\sigma):\max_j|k_j|\le N}\,\prod_{(u,v)\in\sigma}R(k_u,k_v)\,.
\]
By standard combinatorial arguments, the sum over $C_{N,n}^c$ can be shown to be asymptotically equivalent to the sum over all tuples that are Catalan with respect to some $\sigma\in NC_2(2m)$, that is,  whenever $(j,k)\in\sigma$,
\begin{equation*}
 |i_{j-1}-i_{k}|\vee|i_j-i_{k-1}|\le N\,.
\end{equation*}
The final step is to show that for fixed $\sigma\in NC_2(2m)$, if $D_\sigma$ denotes the set of tuples in $\{1,\ldots,n\}^{2m}$ which are Catalan with respect to $\sigma$, then
\[
 \lim_{m\to\infty}\sum_{i\in D_\sigma}E\left[X_{i_1,i_2}\ldots X_{i_{2m-1},i_{2m}}X_{i_{2m},i_1}\right]
\]
\[
 =\sum_{k\in S(\sigma):\max_j|k_j|\le N}\,\prod_{(u,v)\in\sigma}R(k_u,k_v)\,.
\]
This follows by computing the expectation via Wick's formula, and observing that in that formula, the contribution of all the pair partitions excluding $\sigma$ is asymptotically negligible.
This final step establishes \eqref{main.eq3}.

\section{Some combinatorics}\label{sec:combinatorics}

In this section, we recall some elementary combinatorial notions, and
prove a few results related to them. The results of this section are
not of independent interest, but will be used in Section
\ref{sec:gaussian}.

There is an infinite totally ordered set called the ``alphabet'' whose
elements are called ``letters''. The order with which the alphabet is
endowed is the ``alphabetical ordering''. A ``word'' is an ordered
finite collection of not necessarily distinct letters. While the
actual description of the alphabet is irrelevant, to fix ideas, we
shall consider the set of natural numbers with the natural ordering to
be the alphabet. Two words are ``distinct'' if one cannot be obtained
from the other by relabeling letters. For example, the words 1112 and
2224 are not distinct, and the words 4612 and 2181 are distinct. If
the lengths of two words are different, then they are necessarily
distinct.

Let $P(2m)$ and $NC_2(2m)$ denote the set of pair partitions and
non-crossing pair partitions of $\{1,\ldots,2m\}$
respectively. Clearly, $NC_2(2m)$ is a proper subset of $P(2m)$ for
$m\ge2$. For example, $\{(1,3),(2,4),(5,6)\}\in P(6)\setminus NC_2(6)$
and $\{(1,4),(2,3),(5,6)\}\in NC_2(6)$. Recall that for $\sigma\in
NC_2(2m)$, the Kreweras complement $K(\sigma)$ is the maximal
partition $\overline\sigma$ of $\{\overline 1,\ldots,\overline {2m}\}$
such that $\sigma\cup\overline\sigma$ is a non-crossing partition of
$\{1,\overline1,\ldots,2m,\overline{2m}\}$. For example,
\[
K\left(\{(1,4),(2,3),(5,6)\}\right)=\{(\overline 1,\overline
3),(\overline 2),(\overline 4,\overline 6),(\overline 5)\}\,.
\]

For $m\ge1$, $\pi$ is a ``pairing'' of $\{1,\ldots,2m\}$ if it is a
permutation of that set satisfying
\[
 \pi(j)\neq j=\pi(\pi(j)),\,1\le j\le2m\,.
\]
Call $\pi$ to be an ``almost pairing'' of $\{1,\ldots,2m\}$ if it is a
permutation satisfying
\[
 \#\{1\le j\le2m:\pi(j)=j\}=2\,,
\]
and
\[
 \pi(\pi(j))=j,\,1\le j\le2m\,.
\]
There is a clear bijection between the set of pairings and $P(2m)$,
namely for any pairing $\pi$, $\{(j,\pi(j)):1\le j\le2m\}\in
P(2m)$. Keeping this bijection in mind, we shall use the words pairing
and pair partition interchangeably.

Recall that a word $A:=a_1\ldots a_{2m}$ is ``pair matched'' if there
exists a pairing $\pi$ of $\{1,\ldots,2m\}$ such that
\[
 a_j=a_{\pi(j)},\,1\le j\le2m\,.
\]
If there exists an almost pairing $\pi$ satisfying the above, then $A$
is ``almost pair matched''. Examples of pair matched words are 1212,
122221 etc., and that of almost pair matched words are 1231, 2111,
1221 etc. An example of a non-pair matched word is 111222. Notice that
while the set of pairings and almost pairings are disjoint, a pair
matched word is necessarily almost pair matched. Another useful
observation is that a word is almost pair matched if and only if it be
can made pair matched by changing at most one letter.

Recall that a pair matched word is a ``Catalan word'' if successive
deletions of double letters lead to the empty word. Examples of
Catalan words are 1221, 123321, 122122 etc., while 1212 is an example
of a pair-matched word which is not a Catalan word.

The conventions that we now discuss will be assumed throughout the
article. Any tuple $i\in\bbz^k$ is taken to be of the form
$i:=(i_1,\ldots,i_k),$ and furthermore, implicitly defines
$i_0:=i_k$.  The same convention also applies to words, that is, for
a word $A=a_1\ldots a_k$,
$a_0:=a_k$.
The next convention is that, for alphabets or integers $a,b,c,d$, we say
\begin{equation}\label{eq.conv2}
 (a,b)\approx(c,d)
\end{equation}
if $a\wedge b = c\wedge d$ and $a\vee b = c\vee d$.  For integers
``$\wedge$'' and ``$\vee$'' have the usual interpretation of minimum
and maximum respectively, whereas for letters $u$ and $v$, $u\wedge v$
and $u\vee v$ mean the one that comes first in the alphabetical
ordering, and the one that comes later, respectively.

Given words $A:=a_1\ldots a_m$ and $B:=b_1\ldots b_m$ { of the same
  length}, say that $B$ is an ``offspring'' of $A$ if the following is
true. Whenever $a_j=a_k$ for some $1\le j,k\le m$, it holds that
\[
\left(b_{j-1},b_j\right)\approx\left(b_{k-1},b_k\right)\,.
\]
For example, 1213 is an offspring of both 1221 and 5789, and both 7565
and 1111 are offsprings of 2211.

The next notion we need is that of a ``compound offspring
word''.  A word $B=b_1\ldots b_{2m}$ is a compound offspring of
$A=a_1\ldots a_{2m}$ if $b_1\ldots b_m$ and $b_{m+1}\ldots b_{2m}$ are
offsprings of $a_1\ldots a_m$ and $a_{m+1}\ldots a_{2m}$ respectively,
and furthermore, whenever $a_j=a_k$, it holds that
\[
 \left(b_{j}, b_{\gamma(j)}\right)\approx\left(b_{k}, b_{\gamma(k)}\right)\,,
\]
where
\[
 \gamma(j):=\left\{
\begin{array}{ll}
 j-1,&j\in\{1,\ldots,2m\}\setminus\{1,m+1\}\,,\\
m,&j=1\,,\\
2m,&j=m+1\,.
\end{array}
\right.
\]

For a word $A$, we denote by $\#A$ the number of distinct letters in $A$ (and {\bf not the length} of $A$). 

The following result is well known in the literature, but in different settings. One can look at, for example, equation (34) in the proof of Theorem 4 in \cite{bose:sen:2008} where the same claim has been restated in a slightly different language. Hence we omit the proof.

\begin{lemma}\label{comb.l0}
(a) Let $A$ be a pair matched word of length $2m$ for some $m\ge1$. Then $A$ has an offspring word $B$ with
\begin{equation}\label{comb.l0.eq1}
 \#B=m+1
\end{equation}
if and only if $A$ is a Catalan word with
\[
 \#A=m\,.
\]
In this case, the offspring word is unique upto relabeling of letters.\\
(b) Assume that $A_1$ and $A_2$ are distinct Catalan words of length $2m$, that is, one cannot be obtained from the other by relabeling letters. If $B_1$ and $B_2$ are offsprings of $A_1$ and $A_2$ respectively such that
\[
 \#B_1=\#B_2=m+1\,,
\]
then $B_1$ and $B_2$ are distinct.\\
(c) Furthermore, if $B=b_1\ldots b_{2m}$ is an offspring of $A=a_1\ldots a_{2m}$ satisfying \eqref{comb.l0.eq1}, then it is necessary that whenever $a_j=a_k$ for some $j<k$, it holds that
\begin{eqnarray}
 b_{j-1}&=&b_k\,,\mbox{ and }\label{comb.l0.eq2}\\
b_j&=&b_{k-1}\,.\label{comb.l0.eq3}
\end{eqnarray}
\end{lemma}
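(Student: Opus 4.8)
The plan is to recast the entire lemma as a statement about closed walks on graphs, which is what exposes the tree structure hidden in the offspring condition. Given an offspring word $B=b_1\ldots b_{2m}$, I would form the graph $G_B$ whose vertices are the distinct letters of $B$ (so $|V(G_B)|=\#B$) and whose edges are the unordered pairs $\{b_{j-1},b_j\}$ for $j=1,\ldots,2m$, using the cyclic convention $b_0=b_{2m}$. Then $B$ traces a closed walk $b_0\to b_1\to\cdots\to b_{2m}=b_0$ of length $2m$, so $G_B$ is connected. The defining condition for $B$ to be an offspring of $A$ says precisely that whenever $a_j=a_k$ the two steps $j$ and $k$ traverse the same edge, i.e.\ $\{b_{j-1},b_j\}=\{b_{k-1},b_k\}$; in particular the number of distinct edges of $G_B$ is at most the number of distinct letters $\#A\le m$ of the pair matched word $A$.

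For part (a), I would first treat the forward implication. Since $G_B$ is connected with $\#B=m+1$ vertices it has at least $m$ edges, while the observation above gives at most $m$; hence $G_B$ is a tree with exactly $m$ edges, and the map sending a letter of $A$ to its edge is a bijection. Thus $\#A=m$, so each letter of $A$ occurs exactly twice and its letter classes are exactly the $m$ pairs of its pairing $\pi$. Every edge of a tree is a bridge, so a closed walk crosses it an even number of times; as the $2m$ steps distribute over $m$ edges with each count even and $\ge2$, every edge is traversed exactly twice, and $\pi$ pairs the two traversals of each edge. The edge-pairing of a closed walk crossing each edge of a tree exactly twice is non-crossing — equivalently, each leaf produces an adjacent matched pair $a_j=a_{j+1}$ whose deletion prunes that leaf — so $A$ is a Catalan word. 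For the converse, together with existence and uniqueness up to relabeling, I would induct using the double-letter deletion that defines Catalan words: a Catalan $A$ has an adjacent matched pair $a_j=a_{j+1}$, and the offspring condition $(b_{j-1},b_j)\approx(b_j,b_{j+1})$ forces $b_{j-1}=b_{j+1}\ne b_j$ (the alternative $b_{j-1}=b_j$ would collapse letters and make $\#B=m+1$ impossible). Deleting positions $j,j+1$ removes a leaf and reduces to a Catalan word of length $2m-2$ with an offspring of $m$ letters, while re-attaching a leaf at $b_{j-1}=b_{j+1}$ with a fresh letter $b_j$ reconstructs $B$; this simultaneously builds an offspring and shows it is forced up to the labelling of letters.

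Part (c) is then immediate from the tree structure: if $a_j=a_k$ with $j<k$, the edge $e=\{b_{j-1},b_j\}$ is a bridge traversed exactly twice, at steps $j$ and $k$. Were both traversals in the same direction, say $b_{j-1}=b_{k-1}$ and $b_j=b_k$, the walk would have to return from the endpoint reached at step $j$ to the endpoint occupied at step $k-1$, forcing a third crossing of the bridge $e$, a contradiction. Hence the two crossings are opposite, giving $b_{j-1}=b_k$ and $b_j=b_{k-1}$. For part (b), I would note that an offspring $B$ with $\#B=m+1$ determines the pairing of $A$: by the above, $\pi(j)=k$ iff $j\ne k$ and $\{b_{j-1},b_j\}=\{b_{k-1},b_k\}$, and this $k$ is unique since each edge is crossed exactly twice. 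Since a pair matched word with $\#A=m$ is determined up to relabeling by its pairing, $B$ determines $A$ up to relabeling, so distinct Catalan words cannot share an offspring and $B_1,B_2$ are distinct. The main obstacle is the bookkeeping underlying the tree picture — establishing connectivity, the exact edge count, and the parity statement that each edge is crossed exactly twice — together with tracking the cyclic index convention and the phrase ``up to relabeling'' consistently through the induction.
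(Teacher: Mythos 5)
Your proof is correct; the paper itself omits the argument for this lemma, deferring to equation (34) in the proof of Theorem 4 of Bose and Sen (2008), and your closed-walk-on-a-tree formulation is essentially the standard argument behind that citation (connected graph on $m+1$ vertices with at most $m$ edges is a tree, each edge is a bridge crossed exactly twice, leaves give adjacent double letters, and the two crossings of a bridge in a closed walk must be in opposite directions). The only bookkeeping worth making explicit is that a step with $b_{j-1}=b_j$ would be a self-loop, and since a connected graph on $m+1$ vertices needs $m$ non-loop edges while you have at most $m$ edges in total, no loops can occur --- after which the tree structure, the parity argument for (c), and the leaf-pruning induction for existence and uniqueness all go through as you describe.
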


As mentioned at the beginning of the section, the remaining results will be used for later results in this section or the ones in Section \ref{sec:gaussian}. 

\begin{lemma}\label{gaussian.l0}
Let $B=b_1\ldots b_{2k}$ be a compound offspring word of $A=a_1\ldots a_{2k}$ for some $k\ge2$. Define
\[
 n_j:=\left\{\begin{array}{ll}
	      \#(b_kb_1\ldots b_{j})-\#(a_1\ldots a_{j}),&1\le j\le k-1\,,\\
	      \#(b_{2k}b_1\ldots b_{j})-\#(a_1\ldots a_{j}),&k+1\le j\le 2k-1\,.
             \end{array}
\right.
\]
Then,
\begin{equation}\label{gaussian.l0.mainclaim}
1\ge n_1\ge\ldots\ge n_{k-1}\ge n_{k+1}-1\ge\ldots\ge n_{2k-1}-1\,.
\end{equation}
Consequently, if $D$ is an offspring word of $C:=c_1\ldots c_k$ for some $k\ge2$, then
\begin{equation}\label{gaussian.l0.claim}
 \#D\le1+\#(c_1\ldots c_{k-1})\,.
\end{equation}
\end{lemma}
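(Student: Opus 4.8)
The plan is to read each word left to right and control how the ``excess'' $n_j$ changes one unit at a time. For consecutive indices $j,j+1$ lying in the same half, I would write
\[
 n_{j+1}-n_j=\Delta^B_{j+1}-\Delta^A_{j+1}\,,
\]
where $\Delta^A_{j+1}\in\{0,1\}$ records whether $a_{j+1}$ is a letter not already among $a_1,\ldots,a_j$, and $\Delta^B_{j+1}\in\{0,1\}$ records whether $b_{j+1}$ is not already among the prepended prefix ($b_kb_1\ldots b_j$ in the first half, $b_{2k}b_1\ldots b_j$ in the second). Since both increments lie in $\{0,1\}$, their difference lies in $\{-1,0,1\}$, so to obtain $n_{j+1}\le n_j$ it suffices to show that $a_{j+1}$ repeating an earlier letter forces $b_{j+1}$ to repeat as well.

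For the first block, $1\le j\le k-2$, if $a_{j+1}=a_l$ with $l\le j$ then the offspring relation on $b_1\ldots b_k$ gives $(b_j,b_{j+1})\approx(b_{l-1},b_l)$ (reading $b_0=b_k$), so $b_{j+1}\in\{b_{l-1},b_l\}\subseteq\{b_k,b_1,\ldots,b_j\}$ and $\Delta^B_{j+1}=0$; this yields $n_1\ge\cdots\ge n_{k-1}$, while the base case $n_1=\#(b_kb_1)-1\le1$ gives the leftmost inequality. For the second block, $k+1\le j\le 2k-2$, the matching index $l$ with $a_{j+1}=a_l$ may lie in either half. If $l$ lies in the second half, the offspring relation on $b_{k+1}\ldots b_{2k}$ (with cyclic predecessor $b_{2k}$ for the index $k+1$) pins $b_{j+1}$ inside $\{b_{2k},b_1,\ldots,b_j\}$. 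If instead $l\le k$, the offspring relation does not apply, and here I would invoke the compound condition $(b_{j+1},b_{\gamma(j+1)})\approx(b_l,b_{\gamma(l)})$ with $\gamma(j+1)=j$ to conclude $b_{j+1}\in\{b_l,b_{\gamma(l)}\}\subseteq\{b_{2k},b_1,\ldots,b_j\}$. This gives $n_{k+1}\ge\cdots\ge n_{2k-1}$, that is, $n_{k+1}-1\ge\cdots\ge n_{2k-1}-1$.

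The crux is the middle transition $n_{k-1}\ge n_{k+1}-1$, where the prepended letter switches from $b_k$ to $b_{2k}$. Writing $n_{k+1}-n_{k-1}=U-W$, where $U\in\{0,1,2\}$ counts the new letters contributed by $\{b_{k+1},b_{2k}\}$ relative to $\{b_1,\ldots,b_k\}$ and $W\in\{0,1,2\}$ counts the new letters contributed by $\{a_k,a_{k+1}\}$ relative to $\{a_1,\ldots,a_{k-1}\}$, the desired bound $U-W\le1$ fails only when $U=2$ and $W=0$. I would rule this out by contradiction: $W=0$ forces $a_{k+1}=a_l$ for some $l\le k-1$, and the compound condition $(b_{k+1},b_{\gamma(k+1)})\approx(b_l,b_{\gamma(l)})$ with $\gamma(k+1)=2k$ gives $\{b_{k+1},b_{2k}\}=\{b_l,b_{\gamma(l)}\}\subseteq\{b_1,\ldots,b_k\}$, so both $b_{k+1}$ and $b_{2k}$ are old letters, contradicting $U=2$. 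I expect this to be the main obstacle, together with keeping the cyclic predecessor conventions $\gamma(1)=k$ and $\gamma(k+1)=2k$ correctly aligned with the prepended letters in the definition of $n_j$; these conventions are exactly what makes the old/new bookkeeping close, and misaligning them is the easiest way to break the argument.

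Finally, the consequence follows by applying only the first block to the single offspring pair $(C,D)$: setting $m_j:=\#(d_kd_1\ldots d_j)-\#(c_1\ldots c_j)$ for $1\le j\le k-1$, the same offspring computation gives $1\ge m_1\ge\cdots\ge m_{k-1}$, and since $m_{k-1}=\#(d_1\ldots d_k)-\#(c_1\ldots c_{k-1})=\#D-\#(c_1\ldots c_{k-1})$, the bound $\#D\le1+\#(c_1\ldots c_{k-1})$ is immediate.
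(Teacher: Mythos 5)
Your proof is correct and follows essentially the same route as the paper: the same one-step bookkeeping of how $\#(\text{prefix of }B)$ and $\#(\text{prefix of }A)$ grow, with the case split on whether $a_{j+1}$ repeats an earlier letter, and the same deduction of \eqref{gaussian.l0.claim} from $n_{k-1}\le 1$. The only difference is that you spell out the second-half and middle-transition cases (via the compound condition and $\gamma(k+1)=2k$) that the paper compresses into ``similar arguments,'' and your treatment of those cases is accurate.
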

\begin{proof}
 It is easy to see that the inequality $n_{k-1}\le1$ in \eqref{gaussian.l0.mainclaim} implies \eqref{gaussian.l0.claim}. So the former claim is the one that needs a proof. The leftmost inequality in \eqref{gaussian.l0.mainclaim} is trivial. For the subsequent inequalities, fix $1\le j\le k-2$, and we shall show that
\begin{equation}\label{gaussian.l0.eq1}
 n_{j+1}\le n_j\,.
\end{equation}
The proof will be separate for the two cases:
\[
 \mbox{\bf(Case 1) } a_{j+1}\neq a_i\mbox{ for all }1\le i\le j\,,
\]
and
\[
 \mbox{\bf(Case 2) } a_{j+1}=a_i\mbox{ for some }1\le i\le j\,.
\]
Observe that in Case 1,
\begin{eqnarray*}
 n_{j+1}&=&\#(b_{k}b_1\ldots b_{j+1})-\#(a_1\ldots a_{j+1})\\
&\le&1+\#(b_{k}b_1\ldots b_{j})-\#(a_1\ldots a_{j+1})\\
&=&\#(b_{k}b_1\ldots b_{j})-\#(a_1\ldots a_{j})\\
&=&n_j\,.
\end{eqnarray*}
In Case 2, if $i\ge2$, then $b_{j+1}$ equals $b_i$ or $b_{i-1}$, and if $i=1$, then $b_{j+1}$ equals $b_i$ or $b_k$.
Hence
\begin{eqnarray*}
 \#(b_kb_1\ldots b_{j+1})&=&\#(b_kb_1\ldots b_{j})\,,\\
\mbox{and }\#(a_1\ldots a_{j+1})&=&\#(a_1\ldots a_{j})\,,
\end{eqnarray*}
which shows that
\[
 n_{j+1}=n_j\,.
\]
This establishes \eqref{gaussian.l0.eq1} for $1\le j\le k-2$. Similar arguments establish the same claim for $k+1\le j\le2k-2$, and that
\[
 n_{k+1}\le1+n_{k-1}\,.
\]
This completes the proof of \eqref{gaussian.l0.mainclaim}, and thereby establishes the lemma.
\end{proof}

\begin{lemma}\label{gaussian.l1}
 Suppose that $A$ is an almost pair matched word of length $2m$ with $m\ge2$, and $B$ is an offspring of $A$. Then, 
\[
 \#B\le m+1\,.
\]
\end{lemma}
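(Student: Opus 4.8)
The plan is to encode an offspring $B=b_1\ldots b_{2m}$ as a closed walk and read off $\#B$ as the number of vertices it visits. I would take the distinct letters occurring in $B$ to be the vertices, and for each $j=1,\ldots,2m$ regard the unordered pair $\{b_{j-1},b_j\}$ (with the cyclic convention $b_0=b_{2m}$) as an edge traversed at the $j$-th step. Since $b_0=b_{2m}$, this is a closed walk of length $2m$, so the traced graph $G$ is connected and its vertex set $V$ has exactly $\#B$ elements. The offspring condition enters as follows: whenever $a_j=a_k$ we have $(b_{j-1},b_j)\approx(b_{k-1},b_k)$, which by the definition of $\approx$ means $\{b_{j-1},b_j\}=\{b_{k-1},b_k\}$, i.e. steps $j$ and $k$ traverse the same edge of $G$.

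First I would bound the number $E$ of distinct edges of $G$. Fix an almost pairing $\pi$ of $\{1,\ldots,2m\}$ witnessing that $A$ is almost pair matched; it has exactly two fixed points and pairs up the remaining $2m-2$ positions into $m-1$ genuine pairs. Each such pair $(j,k)$ forces steps $j$ and $k$ to share an edge, so these $2m-2$ steps contribute at most $m-1$ distinct edges, while the two fixed points contribute at most $2$ further edges; any additional coincidence among the letters of $A$ merely identifies more edges and can only lower the count. Hence $E\le(m-1)+2=m+1$. Because $G$ is connected we obtain $\#B=V\le E+1\le m+2$, and it remains only to exclude $V=m+2$.

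The crux, and the step I expect to be the main obstacle, is ruling out $V=m+2$. If $V=m+2$, then the chain $m+2=V\le E+1\le m+2$ must be an equality throughout, forcing $E=m+1$ and $V=E+1$; for a connected graph the latter means $G$ is a tree. A closed walk on a tree crosses every edge an even number of times, since deleting an edge splits the tree into two parts and a closed walk must pass between them an even number of times. As each of the $m+1$ tree edges is traversed at least once by our walk, it is in fact traversed at least twice, so the walk has length at least $2(m+1)=2m+2$. This contradicts the fact that the walk has length exactly $2m$. Thus $V=m+2$ is impossible, and combined with the bound $V\le m+2$ from the previous step this yields $\#B=V\le m+1$, as claimed.
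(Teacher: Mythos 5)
Your argument is correct, but it takes a genuinely different route from the paper's. The paper first proves (Lemma \ref{gaussian.l0}) that for any offspring $D$ of $C=c_1\ldots c_k$ one has $\#D\le 1+\#(c_1\ldots c_{k-1})$, via a monotone increment count on the quantities $n_j$; Lemma \ref{gaussian.l1} then follows by a cyclic rotation that places one of the two unpaired letters of $A$ in the last position, so that the word obtained by dropping that last letter has exactly $m$ distinct letters. You instead recast $B$ as a closed walk of length $2m$ and run the classical Euler-circuit/double-tree count from Wigner's moment method: the almost pairing caps the number of distinct edges at $(m-1)+2=m+1$, connectivity gives $\#B\le E+1\le m+2$, and the extremal case is excluded by the parity of edge traversals of a closed walk on a tree (each of the $m+1$ edges would be crossed at least twice, forcing length at least $2m+2>2m$). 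The details check out, including the equality case: when $V=E+1$ all edges must be non-loops, so every step crosses a genuine tree edge and the parity count applies. What your route buys is a self-contained and conceptually transparent proof of this one lemma; what the paper's route buys is reusability, since the same monotonicity machinery of Lemma \ref{gaussian.l0} also drives the compound-offspring bounds in Lemmas \ref{comb.l2} and \ref{comb.l1}, which your walk argument would need a separate and less obvious adaptation to cover.
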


\begin{proof}Since $A$ is almost pair matched, clearly $\#A\le m+1$.
If $\#A\le m$, then by \eqref{gaussian.l0.claim}, it follows that
\[
 \#B\le1+\#A\le1+m\,.
\]

So, without loss of generality, let us assume that $\#A=m+1$.  We
start with the observation that if $b_1\ldots b_{2m}$ is an offspring
of $a_1\ldots a_{2m}$, then $b_2\ldots b_{2m}b_1$ is an offspring of
$a_2\ldots a_{2m}a_1$. Therefore, once again without loss of
generality, we can and do assume that
\[
 A=W_1cW_2d\,,
\]
where $W_1$ and $W_2$ are possibly empty words such that $W_1W_2$ is pair matched, and $c,d$ are distinct letters which do not occur in $W_1W_2$. Therefore, by \eqref{gaussian.l0.claim},
\begin{eqnarray*}
 \#B&\le&1+\#(W_1cW_2)\\
&=&1+m\,.
\end{eqnarray*}
This completes the proof.
\end{proof}

\begin{lemma}\label{comb.l2}
 Suppose that $A$ is an almost pair matched word of length $2m$ with $m\ge2$, and $B$ is a compound offspring of $A$. Then, 
\[
 \#B\le m+2\,.
\]
\end{lemma}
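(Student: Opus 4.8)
The plan is to apply Lemma~\ref{gaussian.l0} to the pair $(A,B)$ with $k=m$, after first normalizing the presentation of $A$. Taking $j=2m-1$ in the chain \eqref{gaussian.l0.mainclaim} gives $n_{2m-1}-1\le n_{m-1}\le 1$, so $n_{2m-1}\le 2$. Since
\[
 n_{2m-1}=\#(b_{2m}b_1\ldots b_{2m-1})-\#(a_1\ldots a_{2m-1})=\#B-\#(a_1\ldots a_{2m-1}),
\]
this reads $\#B\le 2+\#(a_1\ldots a_{2m-1})$. It would therefore suffice to know that deleting the last letter of $A$ leaves at most $m$ distinct letters, i.e.\ $\#(a_1\ldots a_{2m-1})\le m$.

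The subtlety is that this need not hold for an arbitrary presentation of $A$. Fix an almost pairing $\pi$ of $A$; it has $m-1$ pairs and two fixed points, the ``singletons'', so $\#A\le m+1$. If $a_{2m}$ happens to belong to a matched pair, then $\#(a_1\ldots a_{2m-1})=\#A$ may equal $m+1$, and a bare application of Lemma~\ref{gaussian.l0} yields only $\#B\le m+3$. So first I would bring a singleton of $\pi$ to position $2m$, using two invariances. Interchanging the halves, i.e.\ passing to $A^*=a_{m+1}\ldots a_{2m}a_1\ldots a_m$ and $B^*=b_{m+1}\ldots b_{2m}b_1\ldots b_m$, preserves the compound offspring relation (the two half-offspring conditions are merely swapped, and since this reflection commutes with $\gamma$ the cross condition $(b_j,b_{\gamma(j)})\approx(b_k,b_{\gamma(k)})$ is preserved) and leaves $\#B$ unchanged; so, reflecting if necessary, I may assume a singleton lies in the second half. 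A cyclic rotation within a single half is likewise harmless, because the offspring condition is cyclic (the letter preceding the first is taken to be the last) and $\gamma$ is exactly the cyclic-predecessor map on each half, so rotating a half preserves every defining relation and $\#B$. Rotating the second half then carries the chosen singleton to position $2m$.

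With position $2m$ a singleton of $\pi$, the count is immediate. If $a_{2m}$ occurs only at position $2m$, deleting it drops the number of distinct letters, so $\#(a_1\ldots a_{2m-1})=\#A-1\le m$. If instead $a_{2m}$ equals a letter occurring elsewhere, then two of the $m+1$ blocks of $\pi$ share a letter, forcing $\#A\le m$, whence $\#(a_1\ldots a_{2m-1})\le\#A\le m$. In either case $\#B\le m+2$, as claimed.

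I expect the normalization, rather than the final arithmetic, to be the real obstacle: Lemma~\ref{gaussian.l0} applied off the shelf only gives $\#B\le m+3$, and one must check with some care that both the half-swap and the intra-half rotation genuinely preserve the compound offspring relation---in particular the $\gamma$-indexed cross condition tying the two halves together---before the sharp bound $m+2$ emerges.
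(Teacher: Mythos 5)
Your proof is correct and follows essentially the same route as the paper's: both extract $\#B\le 2+\#(a_1\ldots a_{2m-1})$ from the rightmost inequality of \eqref{gaussian.l0.mainclaim} and then use the half-swap and intra-half rotation invariances of the compound offspring relation to place a singleton at position $2m$, so that $\#(a_1\ldots a_{2m-1})\le m$. Your verification of the two invariances is somewhat more explicit than the paper's, but the argument is the same.
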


\begin{proof}
  Denote $A = a_1\ldots a_{2m}$.  By the fact that the rightmost
  quantity in \eqref{gaussian.l0.mainclaim} is at most $1$, it follows
  that
  \begin{equation}\label{gaussian.l2.toshow}
    \#B\le2+\#(a_1\ldots a_{2m-1})\,.
  \end{equation}
  Thus, the claim of the lemma follows if $\#A\le m$. So assume
  without loss of generality that
  \[
  \#A=m+1\,.
  \]
  It is easy to see that since $b_1\ldots b_{2m}$ is a compound
  offspring of $a_1\ldots a_{2m}$, so are $b_{m+1}\ldots
  b_{2m}b_1\ldots b_m$ and $b_1\ldots b_mb_{m+2}\ldots b_{2m}b_{m+1}$
  of\\ $a_{m+1}\ldots a_{2m}a_1\ldots a_m$ and $a_1\ldots
  a_ma_{m+2}\ldots a_{2m}a_{m+1}$ respectively. Therefore, as in the
  proof of Lemma \ref{gaussian.l1}, we assume without loss of
  generality that
\begin{equation}\label{comb.l2.wlog}
 a_{2m}\neq a_j,\,j=1,\ldots,2m-1\,.
\end{equation}
Clearly, \eqref{gaussian.l2.toshow} and the observation that under
this assumption 
\[
\#(a_1\ldots a_{2m-1})=m\,, 
\]
 completes the proof.
\end{proof}

\begin{lemma}\label{comb.l1}
 Suppose that $A$ is a pair matched word of length $4m$ for some $m\ge1$. Then, $A$ has a compound offspring word $B$ with
\begin{equation}\label{comb.l1.hypo}
 \#B=2m+2
\end{equation}
if and only if 
\begin{equation}\label{comb.l1.toshow}
 A=A_1A_2
\end{equation}
where $A_1$ and $A_2$ are Catalan words of length $2m$ with no common letters, and
\begin{equation}\label{comb.l1.toshow2}
 \#A_1=\#A_2=m\,.
\end{equation}
In this case,
\[
 B=B_1B_2\,,
\]
where $B_1$ and $B_2$ are offspring words of $A_1$ and $A_2$ respectively, do not have a common letter, and satisfy
\begin{equation}\label{comb.l1.toshow3}
 \#B_1=\#B_2=m+1\,.
\end{equation}
Furthermore, the compound offspring word $B$ satisfying \eqref{comb.l1.hypo} is unique up to relabeling, and if $B$ and $B^\prime$ are compound offspring words of distinct pair matched words of length $4m$ satisfying
\[
 \#B=\#B^\prime=2m+2\,,
\]
then $B$ and $B^\prime$ are distinct.
\end{lemma}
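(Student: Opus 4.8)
The plan is to prove the four assertions — the characterization \eqref{comb.l1.toshow}--\eqref{comb.l1.toshow2}, the structure \eqref{comb.l1.toshow3} of $B$, uniqueness up to relabeling, and distinctness — by reducing everything to the single-word result Lemma~\ref{comb.l0}, once I have shown that the pair partition underlying $A$ has no edge joining the two halves $A_1:=a_1\ldots a_{2m}$ and $A_2:=a_{2m+1}\ldots a_{4m}$. Throughout write $B_1:=b_1\ldots b_{2m}$ and $B_2:=b_{2m+1}\ldots b_{4m}$. Since a pair matched word is almost pair matched, Lemma~\ref{comb.l2} (applied with its ``$m$'' equal to $2m$) already gives the bound $\#B\le 2m+2$; the content is to analyze the equality case \eqref{comb.l1.hypo} in the forward direction and to construct $B$ in the converse.

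For the forward direction I would first read off the consequences of equality from the monotone chain \eqref{gaussian.l0.mainclaim}. The proof of Lemma~\ref{comb.l2} bounds $\#B\le 2+\#(a_1\ldots a_{4m-1})\le 2+\#A\le 2m+2$, the last step because a pair matched word of length $4m$ has at most $2m$ distinct letters. Hence \eqref{comb.l1.hypo} forces $\#A=2m$, i.e. every letter of $A$ occurs exactly twice, and it forces the rightmost quantity in \eqref{gaussian.l0.mainclaim} to equal $1$; since the chain is nonincreasing and bounded above by $1$, this pins every term, giving $n_j=1$ for $1\le j\le 2m-1$ and $n_j=2$ for $2m+1\le j\le 4m-1$. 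In particular $\#B_1=1+\#(a_1\ldots a_{2m-1})$ and, by the symmetry of the compound offspring relation under interchanging the two halves, $\#B_2=1+\#(a_{2m+1}\ldots a_{4m-1})$.

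The crux — and the step I expect to be the main obstacle — is to rule out a ``cross edge'', i.e. a letter occurring once in each half. Let $c$ be the number of such letters, and let $\epsilon_1,\epsilon_2\in\{0,1\}$ record whether $a_{2m}$, respectively $a_{4m}$, is a cross letter. Writing $O$ for the number of letters common to $B_1$ and $B_2$, inclusion--exclusion, the two identities for $\#B_1,\#B_2$, and elementary counting of how the $4m$ positions distribute among internal and cross letters give the exact surplus relation $O=c-\epsilon_1-\epsilon_2$. On the other hand, the equality case in the proof of Lemma~\ref{gaussian.l0} forces the reading $b_{2m}b_1\ldots b_{2m-1}$ to introduce a new letter exactly when $a_1\ldots a_{2m-1}$ does; so the first-half cross positions, being first occurrences in $A_1$, carry $c$ pairwise distinct $B_1$-letters, each of which is dragged into $B_2$ by the compound condition $(b_j,b_{\gamma(j)})\approx(b_k,b_{\gamma(k)})$ at a cross edge $a_j=a_k$ with $j\le 2m<k$. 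Hence $O\ge c$, which together with $O=c-\epsilon_1-\epsilon_2$ forces $\epsilon_1=\epsilon_2=0$ and shows the common letters are exactly these $c$. But the same compound condition also drags $b_{\gamma(j)}$ into the common set; taking $j$ to be the smallest cross position (and using $\gamma(1)=2m$ when $j=1$) produces a common letter located strictly before every cross position in the reading, hence distinct from all $c$ new letters introduced there — a contradiction. Therefore $c=0$.

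Once there is no cross edge, $A_1$ and $A_2$ are pair matched words of length $2m$ on disjoint alphabets, the compound conditions decouple into the two offspring conditions, the pinned counts give $\#B_1=\#B_2=m+1$ with $B_1,B_2$ on disjoint alphabets, and Lemma~\ref{comb.l0}(a) forces each $A_i$ to be Catalan with $\#A_i=m$; this is \eqref{comb.l1.toshow}--\eqref{comb.l1.toshow3}. For the converse I would start from the stated structure, take for each $i$ the offspring $B_i$ of the Catalan word $A_i$ with $\#B_i=m+1$ furnished (and determined up to relabeling) by Lemma~\ref{comb.l0}(a), chosen on disjoint alphabets, and set $B:=B_1B_2$; then $\#B=2m+2$, the two offspring conditions hold by construction, and the remaining compound condition is vacuous because $a_j=a_k$ can occur only within a single half, where it reduces to the offspring condition via $\gamma(1)=2m$ and $\gamma(2m+1)=4m$. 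Finally, uniqueness of $B$ up to relabeling follows from the uniqueness in Lemma~\ref{comb.l0}(a) applied to each half, and the distinctness statement follows from Lemma~\ref{comb.l0}(b): distinct $A$ of the required form must have $A_1\not\sim A_1'$ or $A_2\not\sim A_2'$, whence the corresponding offspring halves differ, and since the halves of $B$ and $B'$ live on disjoint alphabets this forces $B\not\sim B'$.
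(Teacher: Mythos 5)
Your proposal is correct, and everything except the central step coincides with the paper's argument: the deduction of \eqref{comb.l1.toshow}--\eqref{comb.l1.toshow3} from Lemma~\ref{comb.l0}(a) once the halves are known to be disjoint, the converse construction, and the uniqueness/distinctness claims via Lemma~\ref{comb.l0}(a),(b) are all as in the paper. Where you genuinely diverge is in ruling out a letter shared between $A_1$ and $A_2$. The paper's route is short and local: it rotates the second half so that the shared letter sits at position $2m+1$, notes that the compound condition then forces $b_{2m+1}$ and $b_{4m}$ to lie among $b_1,\ldots,b_{2m}$, so that $\#(b_{4m}b_1\ldots b_{2m+1})=\#(b_1\ldots b_{2m})$, and collapses the chain \eqref{gaussian.l0.mainclaim} to $n_{4m-1}\le n_{2m+1}\le n_{2m-1}\le 1$, contradicting $n_{4m-1}=\#B-\#A\ge2$. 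Your route is global: you first pin the whole chain to the value $1$ (using $\#B=2m+2$ together with $\#(a_1\ldots a_{4m-1})=\#A=2m$), read off the exact counts $\#B_1=1+\#(a_1\ldots a_{2m-1})$ and $\#B_2=1+\#(a_{2m+1}\ldots a_{4m-1})$, and then run an inclusion--exclusion on the overlap $O$ of the letter sets of $B_1$ and $B_2$, playing the identity $O=c-\epsilon_1-\epsilon_2$ against the lower bound $O\ge c$ obtained by dragging the $c$ fresh letters at cross positions into $B_2$, and finally exhibiting a $(c+1)$-st common letter $b_{\gamma(j_1)}$ to force $c=0$. I verified the surplus identity and the dragging argument; they are sound, and you correctly establish $\epsilon_1=\epsilon_2=0$ before the last step, which is needed so that no cross position equals $2m$ and hence $\gamma(j_1)$ strictly precedes every cross position in the reading $b_{2m}b_1\ldots b_{2m-1}$. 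The paper's argument buys brevity; yours buys the full equality-case structure (all $n_j$ pinned, exact formulas for $\#B_1,\#B_2$, and identification of the common letters), which is more than is needed here but is a legitimate and complete alternative.
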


\begin{proof}
 Assume for a moment that the ``if and only if'' claim has been shown. Let $B$ be a compound offspring of $A$ such that \eqref{comb.l1.hypo} holds. By definition of a compound offspring word, we can write
\[
 B=B_1B_2
\]
where $B_1$ and $B_2$ are offspring words of $A_1$ and $A_2$ respectively. Now notice that by \eqref{comb.l1.toshow2} and Lemma \ref{comb.l2},
\[
 \#B_i\le m+1,\,i=1,2\,.
\]
Thus,
\begin{eqnarray*}
 2m+2&=&\#B\\
&\le&\#B_1+\#B_2\\
&\le&2m+2\,.
\end{eqnarray*}
Therefore, $B_1$ and $B_2$ cannot have a common letter, because otherwise, the inequality in the second line becomes strict. Also, the inequality in the last line must be an equality, proving \eqref{comb.l1.toshow3}. The final claim follows from Lemma \ref{comb.l0} (b).

So the ``if and only if'' claim is the only part that needs a proof. Once again, the ``if'' part follows trivially from  Lemma \ref{comb.l0} (a). Let us proceed towards the ``only if'' part. So assume that $A$ has a compound offspring word $B$ such that \eqref{comb.l1.hypo} holds. Let $A:=a_1\ldots a_{4m}$ and $B=b_1\ldots b_{4m}$. Set
\begin{eqnarray*}
 A_1&:=&a_1\ldots a_{2m}\,,\\
A_2&:=&a_{2m+1}\ldots a_{4m}\,.
\end{eqnarray*}
Thus, \eqref{comb.l1.toshow} trivially holds. We start with showing that $A_1$ and $A_2$ do not have a common letter. Assume for the sake of contradiction that they have a common letter. The arguments that justify the assumption \eqref{comb.l2.wlog} in the proof of Lemma \ref{comb.l2} show that in this case, $a_{2m+1}$ can be chosen to be that letter without loss of generality, that is,
\begin{equation}\label{comb.l1.eq4}
 a_{2m+1}=a_j\mbox{ for some }1\le j\le2m\,.
\end{equation}
Therefore, 
\begin{equation}\label{comb.l1.eq1}
 (b_{4m},b_{2m+1})\approx(b_j,b_{\gamma(j)})\,,
\end{equation}
where ``$\approx$'' and $\gamma(\cdot)$ are as in \eqref{eq.conv2} and the definition of a compound offspring word respectively.
Let $n_j$ for $j=1,\ldots,2m-1,2m+1,\ldots,4m-1$, be as in the statement of Lemma \ref{gaussian.l0} with $k=2m$.
By that result, it follows that
\[
 1\ge n_1\ge\ldots\ge n_{2m-1}\,,
\]
and
\[
 n_{2m+1}\ge\ldots\ge n_{4m-1}\,.
\]
Thus,
\begin{eqnarray*}
n_{4m-1}&\le& n_{2m+1}\\
&=&\#(b_{4m}b_1\ldots b_{2m+1})-\#(a_1\ldots a_{2m+1})\\
&=&\#(b_1\ldots b_{2m})-\#(a_1\ldots a_{2m+1})\\
&\le&\#(b_1\ldots b_{2m})-\#(a_1\ldots a_{2m-1})\\
&=&n_{2m-1}\\
&\le&1\,,
\end{eqnarray*}
the equality in the third line following by \eqref{comb.l1.eq1}. However, notice that
\begin{equation}\label{comb.l1.eq3}
 n_{4m-1}=\#B-\#(a_1\ldots a_{4m-1})=\#B-\#A\ge2\,,
\end{equation}
the second equality following from the fact that $A$ is pair matched, and the inequality following from \eqref{comb.l1.hypo}. This clearly is a contradiction, thus showing that $A_1$ and $A_2$ have no common letters. An immediate consequence is that $A_1$ and $A_2$ are pair matched words. 

Next, we proceed towards showing \eqref{comb.l1.toshow2}.
Lemma \ref{gaussian.l0} implies that
\begin{equation}\label{comb.l1.eq2}
 1\ge n_{2m-1}\ge n_{2m+1}-1\ge n_{4m-1}-1\ge1\,,
\end{equation}
the rightmost one following from \eqref{comb.l1.eq3} which is clearly valid regardless of the assumption \eqref{comb.l1.eq4}. Thus,
\[
\#A= \#B-n_{4m-1}=2m\,.
\]
Hence each letter in $A$ comes exactly twice, and so \eqref{comb.l1.toshow2} holds. 
Another consequence of \eqref{comb.l1.eq2} is that
\[
 n_{2m-1}=1\,,
\]
a restatement of which in view of the fact that $A_1$ is pair matched is
\[
 \#(b_1\ldots b_{2m})=1+\#A_1=m+1\,.
\]
Since $b_1\ldots b_{2m}$ is an offspring word of $A_1$, by Lemma \ref{comb.l0}, it follows that $A_1$ is a Catalan word. A similar argument holds for $A_2$, and completes the proof.
\end{proof}

\begin{lemma}\label{gaussian.l2}
 Let $A:=A_1\ldots A_{2m}$ be an almost pair matched word of length $2m$ where $m\ge2$. Assume that $A$ has $m+1$ distinct letters $a_1,\ldots,a_{m+1}$ with each of $a_1,\ldots,a_{m-1}$ occurring twice. Fix
\[
 u:=(u_1,\ldots,u_{m-1}),v:=(v_1,\ldots,v_{m-1})\in\bbz^{m-1}\,.
\]
Given a $2m$-tuple $(i_1,\ldots,i_{2m})$ in $\bbn^{2m}$, say that it is $uv$-matched if the following is true:
\[
 \mbox{whenever }A_j=A_k=a_l\mbox{ for some }1\le j<k\le2m\mbox{ and }1\le l\le m-1\,,
\]
it holds that
\[
 (i_{j-1}\wedge i_j)-(i_{k-1}\wedge i_k)=u_l\,,
\]
and
\[
 (i_{j-1}\vee i_j)-(i_{k-1}\vee i_k)=v_l\,,
\]
where $i_0:=i_{2m}$, as usual. Let $U_n$ denote the set of $uv$-matched tuples in $\{1,\ldots,n\}^{2m}$ for $n\ge1$. Then, 
\[
 \#U_n\le4^mn^{m+1}\mbox{ for all }n\ge1\,.
\]
\end{lemma}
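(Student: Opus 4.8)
The plan is to bound $\#U_n$ by a step-by-step ``reveal'' of the coordinates $i_1,\dots,i_{2m}$, counting at each step how many admissible values the newly revealed coordinate can take. The guiding principle is that the two singleton letters and the \emph{first} occurrences of $a_1,\dots,a_{m-1}$ should account for $m+1$ free coordinates, each contributing a factor $n$, whereas the \emph{second} occurrence of each repeated letter forces its coordinate into a set of size at most two.

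First I would record a rotation invariance. If $A':=A_2\ldots A_{2m}A_1$ is the cyclic rotation of $A$ and $(i_1,\ldots,i_{2m})\mapsto(i_2,\ldots,i_{2m},i_1)$ the corresponding rotation of tuples, then the edges $\{i_{t-1},i_t\}$ are merely relabelled, so a tuple is $uv$-matched for $A$ if and only if its rotation is $u'v'$-matched for $A'$, where $u',v'$ is the induced relabelling of the shift vectors. Since the bound $4^mn^{m+1}$ does not depend on the shift vectors, it suffices to prove the estimate after any convenient rotation. As $A$ carries the two singleton letters $a_m,a_{m+1}$, I may therefore assume without loss of generality that $A_{2m}$ is a singleton.

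Next comes the reveal. Writing $i_0:=i_{2m}$ as usual, I would reveal the $2m$ distinct coordinates in the order $i_{2m},i_1,i_2,\dots,i_{2m-1}$, and classify the reveal of $i_t$ (for $1\le t\le 2m-1$) by the status of the letter $A_t$, using the edge $e_t:=\{i_{t-1},i_t\}$ whose other endpoint $i_{t-1}$ is already revealed. If $A_t$ is a singleton or the first occurrence of a repeated letter, no previously revealed edge constrains $e_t$, so $i_t$ is free and ranges over at most $n$ values. If instead $A_t=A_j=a_l$ is the second occurrence of a repeated letter (so $j<t$), then the whole edge $e_j$ has already been revealed, and the definition of $uv$-matched fixes the pair $\bigl(i_{t-1}\wedge i_t,\,i_{t-1}\vee i_t\bigr)=\bigl(i_{j-1}\wedge i_j-u_l,\;i_{j-1}\vee i_j-v_l\bigr)$; since $i_{t-1}$ is known, $i_t$ is forced to lie in the two-element set consisting of the prescribed minimum and maximum, i.e.\ at most two admissible values. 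The initial coordinate $i_{2m}$ is free, with at most $n$ choices.

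Finally I would collect the factors. Because $A_{2m}$ is a singleton, all $m-1$ second occurrences of the repeated letters fall among the positions $1,\dots,2m-1$, giving exactly $m-1$ forced reveals each with at most two choices, while the remaining $2m-(m-1)=m+1$ reveals (including $i_{2m}$) are free, each with at most $n$ choices. By the chain rule for counting, $\#U_n\le 2^{m-1}n^{m+1}\le 4^mn^{m+1}$, as claimed. The only real subtlety, and the reason for the rotation in the second paragraph, is the cyclic closing edge $e_{2m}=\{i_{2m-1},i_{2m}\}$: it is the one edge never used to reveal a coordinate, so if its letter were a second occurrence its constraint would be silently dropped and one would over-count by a factor of $n$, obtaining only $n^{m+2}$. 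Arranging $A_{2m}$ to be a singleton renders this dropped edge constraint-free and is exactly what pins the exponent at $m+1$. (This mirrors the role of the cyclic reduction already used in Lemma~\ref{gaussian.l1}.)
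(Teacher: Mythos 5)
Your proof is correct, and it takes a genuinely different route from the paper's. The paper proves the bound by first attaching to each tuple a function $\pi:\{1,\ldots,2m\}\to\{0,1\}$ that records which endpoint of each edge attains the minimum, thereby converting the $\wedge/\vee$ constraints into linear difference equations; it then argues that the count is dominated by the homogeneous case $u_l=v_l=0$, observes that any tuple satisfying the homogeneous system is an offspring word of $A$, and invokes Lemma \ref{gaussian.l1} to conclude that such a word has at most $m+1$ distinct letters, giving $n^{m+1}$ per choice of $\pi$ and hence the factor $\#\Pi=4^m$. You instead count directly by sequential revelation: after rotating so that the last letter is a singleton (a step you correctly identify as essential, since otherwise the never-used closing edge would cost a factor of $n$), each singleton or first occurrence contributes at most $n$ choices and each second occurrence pins the new coordinate to a two-element set, yielding $2^{m-1}n^{m+1}$ --- a sharper constant than the stated $4^m$. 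Your handling of the sign flip of $u_l,v_l$ under rotation and of the two-point fibre for forced coordinates is sound. The trade-off is that the paper's detour through offspring words reuses machinery (Lemmas \ref{gaussian.l0} and \ref{gaussian.l1}) that is needed anyway for Lemma \ref{gaussian.l7} and for the variance computation via compound offspring words, whereas your argument is self-contained and more elementary, in effect reproving the relevant special case of Lemma \ref{gaussian.l1} on the fly.
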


\begin{proof}
 Let $\Pi$ denote the set of all functions from $\{1,2,\ldots,2m\}$ to $\{0,1\}$. Clearly, if $(i_1,\ldots,i_{2m})\in U_n$, then the following is true. There exists $\pi\in\Pi$ such that whenever $A_j=A_k=a_l$ for some $1\le j<k\le2m$ and $1\le l\le m-1$, it holds that
\begin{equation}\label{gaussian.eq3}
 i_{j-\pi(j)}-i_{k-\pi(k)}=u_l\,,
\end{equation}
and
\begin{equation}\label{gaussian.eq4}
 i_{j-(1-\pi(j))}-i_{k-(1-\pi(k))}=v_l\,.
\end{equation}
It is easy to see that the number of $(i_1,\ldots,i_{2m})$ in $\{1,\ldots,n\}^{2m}$ satisfying \eqref{gaussian.eq3} and \eqref{gaussian.eq4} is at most the number of those satisfying the same equations with $u_l$ and $v_l$ replaced by $0$ for all $l$.

Fix $\pi\in\Pi$. We shall now show that the number of $i:=(i_1,\ldots,i_{2m})\in\{1,\ldots,n\}^{2m}$ satisfying \eqref{gaussian.eq3} and \eqref{gaussian.eq4} with $u_l$ and $v_l$ replaced by $0$ for all $l$ is at most $n^{m+1}$. Clearly, for any such $i$, the word $B=i_1i_2\ldots i_{2m}$ is an offspring word of $A$. Since $B$ can have at most $m+1$ distinct letters by Lemma \ref{gaussian.l1}, it follows that the number of such $i$'s is at most $n^{m+1}$. Thus,
\[
 \#U_n\le n^{m+1}\#\Pi=4^mn^{m+1}\,,
\]
which completes the proof.
\end{proof}

Define a binary operation $\star$ on $\bbz^2$, that is, a function from $\bbz^2\times\bbz^2$ to $\bbz^2$ as follows:
\begin{equation}\label{gaussian.defstar}
 (i,j)\star(k,l):=(i\wedge j-k\wedge l,k\vee l-i\vee j),\,i,j,k,l\in\bbz\,.
\end{equation}
Fix
\[
 u:=(u_1,\ldots,u_{m-1}),v:=(v_1,\ldots,v_{m-1})\in\bbz^{m-1}\,.
\]
For $n\ge1$, let $V_n(u,v)$ denote the set of all tuples $i$ in $\{1,\ldots,n\}^{2m}$ for which there exists an almost pairing $\pi$ of $\{1,\ldots,2m\}$ and an onto function $\phi$ from $W:=\{1\le j\le 2m:\pi(j)\neq j\}$ to $\{1,\ldots,m-1\}$ such that for all $j\in W$,
\begin{equation}\label{gaussian.eq5}
\phi(j)=\phi(\pi(j))\,,
\end{equation}
and
\begin{equation}\label{gaussian.eq6}
 (i_{j-1},i_j)\star(i_{\pi(j)-1},i_{\pi(j)})=(u_{\phi(j)},v_{\phi(j)})\,.
\end{equation}

\begin{lemma}\label{gaussian.l3}
 There exists a finite constant $C(m)$ depending only on $m$ such that
\begin{equation}\label{gaussian.eq2}
 \#V_n(u,v)\le C(m)n^{m+1},\,n\ge1\,.
\end{equation}
\end{lemma}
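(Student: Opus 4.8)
The plan is to reduce the estimate to Lemma~\ref{gaussian.l2} by summing over all admissible ``templates'' $(\pi,\phi)$. First I would observe that the number of almost pairings $\pi$ of $\{1,\ldots,2m\}$ is finite and depends only on $m$: one chooses the two fixed points and then a perfect matching of the remaining $2m-2$ positions. For each such $\pi$ the number of onto functions $\phi\colon W\to\{1,\ldots,m-1\}$ with $\phi(j)=\phi(\pi(j))$ is likewise finite and depends only on $m$; indeed $W$ splits into $m-1$ pairs on which $\phi$ is constant, and onto-ness forces $\phi$ to be a bijection at the level of pairs. Write $M(m)$ for the total number of such templates $(\pi,\phi)$.

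Since every $i\in V_n(u,v)$ comes equipped with at least one admissible $(\pi,\phi)$, I would write
\[
\#V_n(u,v)\le\sum_{(\pi,\phi)}\#\{i\in\{1,\ldots,n\}^{2m}:i\text{ satisfies }\eqref{gaussian.eq5}\text{ and }\eqref{gaussian.eq6}\text{ for }(\pi,\phi)\}\,,
\]
the sum running over the $M(m)$ admissible templates, so that it suffices to bound each summand by $4^m n^{m+1}$.

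Fix a template $(\pi,\phi)$. The key step is to recognise the corresponding summand as an instance of the set $U_n$ of Lemma~\ref{gaussian.l2}. To this end I would build the almost pair matched word $A=A_1\ldots A_{2m}$ by assigning to each position $j\in W$ the letter $a_{\phi(j)}$, so that $A_j=A_{\pi(j)}$ and each of $a_1,\ldots,a_{m-1}$ occurs exactly twice, and assigning two fresh distinct letters $a_m,a_{m+1}$ to the two fixed points of $\pi$; this $A$ satisfies the hypotheses of Lemma~\ref{gaussian.l2}. Unwinding the definition of $\star$ in \eqref{gaussian.defstar}, the condition \eqref{gaussian.eq6} for a pair $\{j,\pi(j)\}$ with $j<\pi(j)$ reads
\[
(i_{j-1}\wedge i_j)-(i_{\pi(j)-1}\wedge i_{\pi(j)})=u_{\phi(j)},\qquad (i_{\pi(j)-1}\vee i_{\pi(j)})-(i_{j-1}\vee i_j)=v_{\phi(j)}\,,
\]
which is exactly the $uv$-matching condition of Lemma~\ref{gaussian.l2} for the word $A$, with $v$ replaced by $-v$ on account of the reversed order in the second coordinate of $\star$. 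Imposing \eqref{gaussian.eq6} for \emph{every} $j\in W$ (rather than only for the smaller representative of each pair) can only shrink the set, so the summand is contained in the set $U_n$ of Lemma~\ref{gaussian.l2} for parameters $u$ and $-v$, and hence has at most $4^m n^{m+1}$ elements.

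Combining the two displays yields $\#V_n(u,v)\le M(m)\,4^m\,n^{m+1}$, and setting $C(m):=M(m)\,4^m$ completes the proof. The only genuine content lies in the template correspondence of the previous paragraph --- matching the $\star$-relations to the offspring/$uv$-matching framework and keeping track of the $\wedge/\vee$ bookkeeping --- since the decisive counting bound $n^{m+1}$, which comes from the offspring word having at most $m+1$ distinct letters via Lemmas~\ref{gaussian.l1} and~\ref{gaussian.l2}, is then imported wholesale. I do not expect the sign flip in the $v$-coordinate to cause any difficulty, as the bound of Lemma~\ref{gaussian.l2} is uniform over the shift parameters.
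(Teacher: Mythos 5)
Your proof is correct and follows essentially the same route as the paper's: a union bound over the finitely many templates $(\pi,\phi)$, each of which is reduced to the count of Lemma~\ref{gaussian.l2} via the correspondence between the $\star$-relations and the $uv$-matching conditions. You simply make explicit the details (the sign flip in the second coordinate of $\star$ and the containment into $U_n$) that the paper's one-line argument leaves implicit.
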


\begin{proof}
 Since there are only finitely many almost pair matched words of length $2m$ and each of them has finitely many offspring words, there are only finitely many almost pairings $\pi$, and given any $\pi$ the number of functions $\phi$ satisfying \eqref{gaussian.eq5} and \eqref{gaussian.eq6} is also finite, the proof follows from the conclusion of Lemma \ref{gaussian.l2}.
\end{proof}

\section{Proof of the main result}\label{sec:gaussian}
For the proof of Theorem \ref{gaussian.t1}, we shall need a few notations. Fix $N\in\bbn$. Call a $2m$-tuple $i:=(i_1,\ldots,i_{2m})\in\bbn^{2m}$ ``$N$-Catalan corresponding to $\sigma$'' if there exists $\sigma\in NC_2(2m)$ such that whenever $(j,k)\in\sigma$,
\begin{equation}\label{gaussian.eq8}
 |i_{j-1}-i_{k}|\vee|i_j-i_{k-1}|\le N\,.
\end{equation}

For $i,j,k,l\ge1$, say that 
\begin{equation}\label{eq.defsim}
 (i,j)\sim(k,l)
\end{equation}
if
\[
 |(i\wedge j)-(k\wedge l)|\vee|(i\vee j)-(k\vee l)|\le N\,.
\]
Say that a $(2m)$-tuple $(i_1,\ldots,i_{2m})$ is ``$N$-pair matched'' if there exists a pairing $\pi$ of $\{1,\ldots,2m\}$  such that 
\[
 (i_{j-1},i_j)\sim(i_{\pi(j-1)},i_{\pi(j-1)+1})\,,\mbox{ for }j=1,\ldots,2m\,,
\]
where $\pi(0):=\pi(2m)$. We shall suppress the ``$N$'' in $N$-Catalan and $N$-pair matched if the $N$ of interest is clear from the context.

\begin{lemma}\label{gaussian.l5}
 Fix $N,m,n\ge1$ and $\sigma\in NC_2(2m)$. Let $V_1,\ldots,V_{m+1}$ denote the blocks of the Kreweras complement of $\sigma$. Write
\[
 V_u=\{v_1^u,\ldots,v_{l_u}^u\},\,u=1,\ldots,m+1\,,
\]
where
\begin{equation}\label{gaussian.order}
 v_1^u\le\ldots\le v_{l_u}^u\,.
\end{equation}
Then $i:=(i_1,\ldots,i_{2m})\in\{1,\ldots,n\}^{2m}$ is a $N$-Catalan tuple corresponding to $\sigma$ if and only if there exists $k:=(k_1,\ldots,k_{2m})\in S(\sigma,N)$ where
\begin{equation}\label{eq.defsigmaN}
 S(\sigma,N):=\left\{k\in\{-N,\ldots,N\}^{2m}:\sum_{j=1}^{l_s}k_{v^s_j}=0,\,s=1,\ldots,k+1\right\}\,,
\end{equation}
and a $0$-Catalan tuple $j:=(j_1,\ldots,j_{2m})\in\{1,\ldots,n\}^{2m}$ such that
\begin{equation}\label{gaussian.eq9}
 i_{v_x^u}=j_{v_x^u}+\sum_{w=1}^xk_{v_w^u},\,x=1,\ldots,l_u,\,u=1,\ldots,m+1\,.
\end{equation}
Furthermore, the $j$ and $k$ satisfying \eqref{gaussian.eq9} are unique.
\end{lemma}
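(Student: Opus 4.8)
The plan is to show that the map $(j,k)\mapsto i$ given by \eqref{gaussian.eq9} is a bijection from the set of pairs $(j,k)$, with $j$ a $0$-Catalan tuple in $\{1,\ldots,n\}^{2m}$ and $k\in S(\sigma,N)$, onto the set of $N$-Catalan tuples $i$ in $\{1,\ldots,n\}^{2m}$ corresponding to $\sigma$. Everything rests on one \textbf{structural fact} about the Kreweras complement, which I isolate first. Listing each block as $V_u=\{v_1^u<\cdots<v_{l_u}^u\}$ as in \eqref{gaussian.order}: (i) a tuple is $0$-Catalan corresponding to $\sigma$ if and only if it is constant on each block $V_u$; and (ii) for every $(a,b)\in\sigma$ the two (possibly trivial) index pairs $\{a-1,b\}$ and $\{a,b-1\}$, read modulo $2m$, each consist of two cyclically consecutive members of a single block, that is, are of the form $\{v_{x-1}^u,v_x^u\}$ or the wrap-around pair $\{v_{l_u}^u,v_1^u\}$, and conversely every such consecutive pair arises this way. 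Here (i) records that the equalities $i_{a-1}=i_b$, $i_a=i_{b-1}$ forced by the $0$-Catalan condition (the $N=0$ case of \eqref{gaussian.eq8}) are exactly the offspring relations \eqref{comb.l0.eq2}--\eqref{comb.l0.eq3} for the Catalan word whose repeated-letter pattern is $\sigma$; the identification of the resulting classes of positions with the Kreweras blocks $V_u$, together with (ii), I defer to the last paragraph.

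Granting the structural fact, the forward direction is a telescoping computation. Given an $N$-Catalan tuple $i$, set $c_u:=i_{v_{l_u}^u}$, define $j$ by $j_{v_x^u}:=c_u$ for all $x,u$, and define $k$ by $k_{v_1^u}:=i_{v_1^u}-c_u$ and $k_{v_x^u}:=i_{v_x^u}-i_{v_{x-1}^u}$ for $2\le x\le l_u$. By construction $j$ is constant on blocks, hence $0$-Catalan by (i), and $j\in\{1,\ldots,n\}^{2m}$ since each $c_u\in\{1,\ldots,n\}$; a direct telescoping gives $j_{v_x^u}+\sum_{w=1}^x k_{v_w^u}=c_u+(i_{v_x^u}-c_u)=i_{v_x^u}$, which is \eqref{gaussian.eq9}, together with $\sum_{w=1}^{l_u}k_{v_w^u}=i_{v_{l_u}^u}-c_u=0$. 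It remains to check $k\in\{-N,\ldots,N\}^{2m}$, as required by \eqref{eq.defsigmaN}: each entry $k_{v_x^u}$ is a difference $i_p-i_q$ over a cyclically consecutive pair $\{p,q\}$ of $V_u$, which by (ii) equals some $\{a-1,b\}$ or $\{a,b-1\}$ with $(a,b)\in\sigma$, so the bound \eqref{gaussian.eq8} yields $|k_{v_x^u}|\le N$. Thus $(j,k)$ is admissible and maps to $i$.

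For the reverse direction, take $j$ a $0$-Catalan tuple (constant $c_u$ on $V_u$ by (i)) and $k\in S(\sigma,N)$, and let $i$ be defined by \eqref{gaussian.eq9}. Fix $(a,b)\in\sigma$. By (ii) the pair $\{a-1,b\}$ is a consecutive pair of some block $V_u$; for an interior pair $\{v_{x-1}^u,v_x^u\}$ one computes $i_{v_x^u}-i_{v_{x-1}^u}=k_{v_x^u}$ directly from \eqref{gaussian.eq9}, while for the wrap-around pair $\{v_{l_u}^u,v_1^u\}$ the constraint $\sum_{w=1}^{l_u}k_{v_w^u}=0$ gives $i_{v_1^u}-i_{v_{l_u}^u}=k_{v_1^u}$. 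In either case the difference is a single entry of $k$, so $|i_{a-1}-i_b|\le N$, and the same argument applied to $\{a,b-1\}$ gives $|i_a-i_{b-1}|\le N$; hence $i$ satisfies \eqref{gaussian.eq8} for every $(a,b)\in\sigma$ and is $N$-Catalan, with $i\in\{1,\ldots,n\}^{2m}$ by assumption. Uniqueness of $(j,k)$ is forced by reading the same formulas backwards: constancy of $j$ on blocks together with the block-sum-zero constraint evaluated at $x=l_u$ in \eqref{gaussian.eq9} forces $c_u=i_{v_{l_u}^u}$, determining $j$, after which successive differences in \eqref{gaussian.eq9} determine $k$.

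The main obstacle is the structural fact, in particular (ii): that consecutive members of a Kreweras block are \emph{directly} linked by a single constraint of $\sigma$, so that the $N$-Catalan bounds transfer cleanly to the telescoped differences and back. I would prove it by induction on $m$ using the standard reduction in $NC_2(2m)$: there is always an adjacent pair $(a,a+1)\in\sigma$, whose removal (with order-preserving relabeling) yields some $\sigma'\in NC_2(2m-2)$ to which the induction hypothesis applies. This pair contributes only the identification $a-1\sim a+1$ and the singleton block $\{a\}$, and the Kreweras complement of $\sigma$ is obtained from that of $\sigma'$ by reinserting this singleton between the images of $a-1$ and $a+1$, an operation that preserves both the block partition and the cyclic adjacency of consecutive block members. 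The base case $m=1$ is immediate. Alternatively, (ii) can be read off the Dyck-path height function of $\sigma$, under which the members of a block are the indices of a common height inside a single excursion and consecutive such indices are separated by exactly one nested arch. I expect the inductive argument to be the cleanest to write, the only delicate points being the cyclic wrap-around pair $\{v_{l_u}^u,v_1^u\}$ and the boundary case $a=1$, where $a-1\equiv 2m$.
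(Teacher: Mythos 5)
Your proposal is correct and follows essentially the same route as the paper: the same equivalence between the $N$-Catalan condition and boundedness of cyclic consecutive differences within Kreweras blocks, the same definitions of $j$ (constant on blocks, equal to $i_{v_{l_u}^u}$) and $k$ (telescoped differences), and the same uniqueness argument via the block-sum-zero constraint. The only difference is one of explicitness: the paper asserts the key equivalence with ``similar reasoning will yield,'' whereas you isolate it as the structural fact (ii) about consecutive members of Kreweras blocks and sketch an inductive proof of it; that is a faithful filling-in of the step the paper leaves implicit, not a different approach.
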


\begin{proof}
It is clear that $i:=(i_1,\ldots,i_{2m})\in\bbn^{2m}$ is a $0$-Catalan tuple corresponding to $\sigma$ if and only if 
\[
 i_{v_1^u}=\ldots=i_{v_{l_u}^u},\,u=1,\ldots,m+1\,.
\]
In view of the ordering \eqref{gaussian.order}, similar reasoning as that leading to the above equivalence will yield that $i$ is a $N$-Catalan tuple corresponding to $\sigma$ if and only if
\begin{equation}\label{gaussian.eq10}
 \bigl|i_{v_1^u}-i_{v_2^u}\bigr|\vee\bigl|i_{v_2^u}-i_{v_3^u}\bigr|\vee\ldots\vee\bigl|i_{v_{l_u}^u}-i_{v_1^u}\bigr|\le N,\,u=1,\ldots,m+1\,.
\end{equation}

Now, suppose that $i$ is a $N$-Catalan tuple corresponding to $\sigma$. Define
\[
 k_{v_w^u}:=i_{v_{w+1}^u}-i_{v_w^u},\,w=1,\ldots,l_u,\,u=1,\ldots,m+1\,,
\]
where $v_{l_u+1}^u:=v_1^u$ for $u=1,\ldots,m+1$. It is easy to see because of \eqref{gaussian.eq10} that $k:=(k_1,\ldots,k_{2m})$ thus defined, belongs to $S(\sigma,N)$. Define
\[
 j_{v_w^u}:=i_{v_{l_u}^u},\,w=1,\ldots,l_u,\,u=1,\ldots,m+1\,.
\]
Then, clearly \eqref{gaussian.eq9} holds, and from the equivalence mentioned at the beginning of this proof, it is easy to see that $j:=(j_1,\ldots,j_{2m})$ is a $0$-Catalan word corresponding to $\sigma$. This completes the proof of the ``only if'' part. 

For the ``if'' part, let $j$ and $k$ be given, and define $i$ by \eqref{gaussian.eq9}. Then, \eqref{gaussian.eq10} is immediate, and by the equivalence mentioned just above that equation, it follows that $i$ is a $N$-Catalan tuple.

Finally, for the uniqueness, assume that $i$ is given. If $j$ and $k$ satisfy \eqref{gaussian.eq9}, then from the fact that
\[
 \sum_{w=1}^{l_u}k_{v_w^u}=0\,,
\]
it follows that
\[
 j_{v_{l_u}^u}=i_{v_{l_u}^u}\,,
\]
This, along with the fact that $j$ is $0$-Catalan, specifies $j$. Then, \eqref{gaussian.eq9} determines $k$. This completes the proof.
\end{proof}

\begin{lemma}\label{gaussian.l8}
 Let $\sigma\in NC_2(2m)$, $j$ be a $0$-Catalan tuple corresponding to $\sigma$, $k\in S(\sigma,N)$ and $i$ be given by \eqref{gaussian.eq9}. Assume furthermore that
\begin{equation}\label{gaussian.eq22}
 \#\{\mbox{distinct numbers in }(j_1,\ldots,j_{2m})\}=m+1\,,
\end{equation}
and that
\begin{equation}\label{gaussian.eq23}
 \min\{|j_u-j_v|:1\le u,v\le2m,\,j_u\neq j_v\}>4mN\,.
\end{equation}
Then, for all $(u,v)\in\sigma$,
\[
 E\left(X_{i_{u-1},i_u}X_{i_{v-1},i_v}\right)=R(k_u,k_v)\,.
\]
\end{lemma}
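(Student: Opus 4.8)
The plan is to evaluate the covariance directly through stationarity and then to identify the resulting argument of $R$ with $(k_u,k_v)$, up to the symmetry \eqref{gaussian.eq13}. First I would use that $X_{a,b}=Z_{a\wedge b,\,a\vee b}$ together with the stationarity of $(Z_{i,j})$ to write, for any indices,
\[
 E\left[X_{i_{u-1},i_u}X_{i_{v-1},i_v}\right]=R\bigl((i_{u-1},i_u)\star(i_{v-1},i_v)\bigr),
\]
where $\star$ is the operation of \eqref{gaussian.defstar}. Indeed $E[Z_{p,q}Z_{r,s}]=R(p-r,s-q)$ by shifting the process, and the two coordinates of $(i_{u-1},i_u)\star(i_{v-1},i_v)$ are precisely $p-r$ and $s-q$ for $(p,q)=(i_{u-1}\wedge i_u,\,i_{u-1}\vee i_u)$ and $(r,s)=(i_{v-1}\wedge i_v,\,i_{v-1}\vee i_v)$. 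So everything reduces to evaluating this single $\star$-product.

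Next I would extract the block structure. Taking $N=0$ in \eqref{gaussian.eq8}, the hypothesis that $j$ is $0$-Catalan corresponding to $\sigma$ means exactly $j_{u-1}=j_v$ and $j_u=j_{v-1}$ for the pair $(u,v)\in\sigma$; equivalently $j$ is constant on each block of the Kreweras complement. By \eqref{gaussian.eq22} these $m+1$ block-values are distinct, so two coordinates share a $j$-value if and only if they lie in the same block. Hence $u-1$ and $v$ lie in one block, of common value $\beta:=j_{u-1}=j_v$, while $u$ and $v-1$ lie in one block, of common value $\alpha:=j_u=j_{v-1}$. Moreover $\alpha\neq\beta$: the consecutive coordinates $u-1$ and $u$ cannot lie in a common block, since within a Kreweras block the cyclic (increasing) successor of $u-1$ is $v\neq u$, whereas if $u$ also lay there it would be forced to be the immediate successor of $u-1$. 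Combined with \eqref{gaussian.eq23}, this yields $|\alpha-\beta|>4mN$.

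The arithmetic heart of the argument is the pair of identities $i_v-i_{u-1}=k_v$ and $i_u-i_{v-1}=k_u$. From \eqref{gaussian.eq9}, with $j$ constant equal to the block value and $\sum_{w=1}^{l_s}k_{v^s_w}=0$, one gets that $i_{v^s_{l_s}}$ equals the block value and $i_{v^s_x}-i_{v^s_{x-1}}=k_{v^s_x}$ for every $x$ read cyclically (so that $i_{v^s_1}-i_{v^s_{l_s}}=k_{v^s_1}$ by the vanishing block sum); in other words $k_a=i_a-i_{\mathrm{pred}(a)}$, where $\mathrm{pred}(a)$ is the cyclic predecessor of $a$ inside its block. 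The main obstacle is the combinatorial fact that, for $(u,v)\in\sigma$, the coordinate $v$ is exactly the successor of $u-1$ inside their common block, and symmetrically $u$ is the successor of $v-1$. This is the standard successor description of the Kreweras complement — equivalently, that its blocks are the cyclically increasing cycles of the permutation $\sigma\gamma$ with $\gamma=(1\,2\,\cdots\,2m)$ — and it is where the non-crossing structure of $\sigma$ genuinely enters. Granting it, $\mathrm{pred}(v)=u-1$ and $\mathrm{pred}(u)=v-1$, so the two identities follow.

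Finally I would resolve the minima and maxima using the separation. Each coordinate differs from its block value by a partial sum of at most $m$ of the $k$'s, hence by at most $mN$ in absolute value; since $|\alpha-\beta|>4mN$, the two coordinates near $\alpha$ are strictly smaller than the two near $\beta$ when $\alpha<\beta$, and strictly larger when $\alpha>\beta$. In the case $\alpha<\beta$ one finds $i_{u-1}\wedge i_u=i_u$, $i_{v-1}\wedge i_v=i_{v-1}$, $i_{u-1}\vee i_u=i_{u-1}$, $i_{v-1}\vee i_v=i_v$, so the $\star$-product equals $(i_u-i_{v-1},\,i_v-i_{u-1})=(k_u,k_v)$ and the expectation is $R(k_u,k_v)$, as required. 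In the case $\alpha>\beta$ the roles of min and max swap and the $\star$-product equals $(i_{u-1}-i_v,\,i_{v-1}-i_u)=(-k_v,-k_u)$; then \eqref{gaussian.eq13} gives $R(-k_v,-k_u)=R(k_u,k_v)$, completing the proof. The degenerate situation $v=u+1$, where $u$ and $v-1$ coincide, is consistent with this scheme, since then $u$ forms a singleton block and $k_u=0=i_u-i_{v-1}$.
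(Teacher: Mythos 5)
Your proposal is correct and follows essentially the same route as the paper: reduce the covariance to $R$ of the $\star$-product via stationarity, derive $i_u-i_{v-1}=k_u$ and $i_v-i_{u-1}=k_v$ from the Kreweras block structure and \eqref{gaussian.eq9}, use \eqref{gaussian.eq22}--\eqref{gaussian.eq23} to resolve the minima and maxima, and invoke \eqref{gaussian.eq13} in the reversed case. The only difference is presentational: you cite the successor description of the Kreweras complement (cycles of $\sigma\gamma$) as a standard fact, whereas the paper verifies the needed adjacencies ($v$ follows $u-1$, $u$ follows $v-1$, with the $u=1$ wrap-around case treated separately) directly from the maximality definition.
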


\begin{proof}
We start by showing that
\begin{eqnarray}
 i_u-i_{v-1}&=&k_u\,,\label{gaussian.eq17}\\
\mbox{and }i_v-i_{u-1}&=&k_v\,.\label{gaussian.eq18}
\end{eqnarray}
Assume without loss of generality that $u<v$. Therefore, $\overline u$ and $\overline{v-1}$ belong to the same block in $K(\sigma)$, and furthermore, the block containing them is a subset of $\{\overline u,\overline{u+1},\ldots,\overline{v-1}\}$. Thus, \eqref{gaussian.eq17} follows from \eqref{gaussian.eq9}. We show \eqref{gaussian.eq18} separately for the cases $u\ge2$ and $u=1$. If $u\ge2$, then $\overline{u-1}$ and $\overline v$ are in the same block of $K(\sigma)$, and furthermore that block does not intersect with $\{\overline u,\overline{u+1},\ldots,\overline{v-1}\}$. This shows \eqref{gaussian.eq18}, once again with the help of \eqref{gaussian.eq9}. If $u=1$, then $\overline{2m}$ and $\overline v$ are in the same block of $K(\sigma)$. Obviously, $\overline{2m}$ has to be the last member of its block. Since $(1,v)\in\sigma$, it follows that $\overline v$ is the first member of the block containing itself and $\overline{2m}$, showing that
\[
 i_v=i_{2m}+k_v=i_0+k_v=i_{u-1}+k_v\,.
\]
This complete the proof of \eqref{gaussian.eq18}

Our next aim is to show that either
\begin{equation}\label{gaussian.eq19}
i_u\vee i_{v-1}<i_{u-1}\wedge i_v\,,
\end{equation}
or
\begin{equation}\label{gaussian.eq20}
 i_u\wedge i_{v-1}>i_{u-1}\vee i_v\,,
\end{equation}
holds. Since $\overline{v-1}$ and $\overline v$ belong to distinct blocks of $K(\sigma)$, \eqref{gaussian.eq22} implies that
\[
 j_v\neq j_{v-1}\,.
\]
This, in conjunction with \eqref{gaussian.eq23} establishes that
\[
 \left|j_v-j_{v-1}\right|>4mN\,.
\]
In view of \eqref{gaussian.eq9}, it follows that
\[
 \left|i_v-j_v\right|\vee\left|i_{u-1}-j_v\right|\vee\left|i_{v-1}-j_{v-1}\right|\vee\left|i_{u}-j_{v-1}\right|\le2mN\,.
\]
If $j_v>j_{v-1}$, then in view of the above two inequalities, it is easy to see that
\[
 i_{u-1}\wedge i_v\ge j_v-2mN>j_{v-1}+2mN\ge i_{v-1}\vee i_u\,,
\]
showing that \eqref{gaussian.eq19} holds. Similarly, if $j_v<j_{v-1}$, then \eqref{gaussian.eq20} holds.

Finally to see the claim of the lemma, assume that \eqref{gaussian.eq20} holds. Then, by stationarity, \eqref{gaussian.eq17} and \eqref{gaussian.eq18}, it follows that
\begin{eqnarray*}
 E\left(X_{i_{u-1},i_u}X_{i_{v-1},i_v}\right)&=&R(-k_v,-k_u)\\
&=&R(k_u,k_v)\,,
\end{eqnarray*}
the second equality following from \eqref{gaussian.eq13}.
It is easy to see that when \eqref{gaussian.eq19} holds, then the claim also holds. This completes the proof.
\end{proof}

\begin{lemma}\label{gaussian.l7} Fix $N\ge0$, $m\ge1$. In what follows, ``pair matched'' and ``Catalan'' mean ``$N$-pair matched'' and ``$N$-Catalan'' respectively.\\
(a) If $\pi\in P(2m)\setminus NC_2(2m)$, then
\[
 \lim_{n\to\infty}n^{-(m+1)}\#\{\mbox{pair matched tuples in }\{1,\ldots,n\}^{2m}\mbox{ corresponding to }\pi\}
\]
\[
 =0\,.
\]
(b) If $\sigma\in NC_2(2m)$, then
\[
 \lim_{n\to\infty}n^{-(m+1)}\#\Bigl[\{\mbox{pair matched tuples in }\{1,\ldots,n\}^{2m}\mbox{ corresponding to }\sigma\}
\]
\begin{equation}\label{gaussian.eq14}
\setminus\{\mbox{Catalan tuples in }\{1,\ldots,n\}^{2m}\mbox{ corresponding to }\sigma\}\Bigr]=0\,.
\end{equation}
(c) If $\sigma\in NC_2(2m)$ and $\pi\in P(2m)\setminus\{\sigma\}$, then
\[
 \lim_{n\to\infty}n^{-(m+1)}\#\Bigl[\{\mbox{Catalan tuples in }\{1,\ldots,n\}^{2m}\mbox{ corresponding to }\sigma\}
\]
\[
 \cap\{\mbox{pair matched tuples in }\{1,\ldots,n\}^{2m}\mbox{ corresponding to }\pi\}\Bigr]=0\,.
\]
\end{lemma}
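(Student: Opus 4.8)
The plan is to treat all three parts by a single principle: in each case the stated combinatorial configuration forces the tuple to take at most $m$ ``essentially distinct'' values, so that the corresponding count is $O(n^m)=o(n^{m+1})$, the loss of one power of $n$ coming from a non-Catalan feature. The tolerance $N$ is absorbed exactly as in the proof of Lemma~\ref{gaussian.l2}: since all the relevant $\wedge/\vee$-differences are bounded by $N$ in absolute value, one first sums over the finite collection of possible difference patterns $(u,v)$, and for each fixed pattern the surviving constraints become affine equalities among $i_1,\ldots,i_{2m}$. Setting the differences to $0$ turns such a pattern into an offspring $B$ of the pattern word of the underlying pairing, and the number of tuples realizing that pattern in $\{1,\ldots,n\}^{2m}$ is at most $n^{\#B}$, since each connected block of the constraint graph contributes a single free coordinate. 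Thus everything reduces to bounding $\#B$ for the offsprings that arise, and for fixed $N$ the number of patterns is a constant $O_N(1)$.

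For part (a), I would let $A_\pi$ be the pattern word of $\pi$, so that $0$-pair matched tuples corresponding to $\pi$ can be identified with the offsprings of $A_\pi$ valued in $\{1,\ldots,n\}$. Because $\pi$ is crossing, $A_\pi$ is not a Catalan word, so by Lemma~\ref{comb.l0}(a) every offspring $B$ of $A_\pi$ satisfies $\#B\le m$; summing $n^{\#B}\le n^m$ over the finitely many difference patterns gives a bound of order $n^m$. For part (b), I would decompose the set of $N$-pair matched tuples corresponding to $\sigma$ according to the orientation in which each $\sigma$-pair is matched: within tolerance $N$ a pair $(j,k)$ is matched either in the ``reversed'' way $i_{j-1}\approx i_k$, $i_j\approx i_{k-1}$ or in the ``same'' way $i_{j-1}\approx i_{k-1}$, $i_j\approx i_k$, the two possibilities coinciding only on the lower-order set where some edge has $|i_{j-1}-i_j|\le 2N$. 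If a tuple is $N$-pair matched but not $N$-Catalan corresponding to $\sigma$, then in every realization at least one $\sigma$-pair is matched in the same orientation (or is degenerate). By Lemma~\ref{comb.l0}(c), an offspring $B$ of $A_\sigma$ with $\#B=m+1$ must match every pair in the reversed way, cf.\ \eqref{comb.l0.eq2} and \eqref{comb.l0.eq3}; hence the offspring attached to any such ``bad'' pattern has $\#B\le m$, and the same summation yields a bound of order $n^m$.

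For part (c), I would instead use the parametrization of Lemma~\ref{gaussian.l5}: a tuple that is $N$-Catalan corresponding to $\sigma$ is uniquely $i=j+(\text{offsets determined by }k)$ with $j$ a $0$-Catalan tuple, constant on the $m+1$ Kreweras blocks of $\sigma$, and $k\in S(\sigma,N)$, so that $|i_a-j_a|\le 2mN$ for every $a$. The key observation is that if the $m+1$ block values of $j$ are pairwise separated by a sufficiently large multiple of $N$ (depending on $m$), then for any pairing $\rho$ under which $i$ is $N$-pair matched, two edges can be $\sim$-close only when they carry the same unordered pair of block values; since the $0$-Catalan walk traverses a tree on the $m+1$ block values with each tree edge used exactly twice, the edges sharing a block-value pair are precisely the $\sigma$-pairs, forcing $\rho=\sigma$. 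Consequently a tuple counted in part (c), being $N$-pair matched for some $\pi\neq\sigma$, must have two block values within that multiple of $N$ of each other; choosing the $m+1$ block values subject to one such near-coincidence costs one power of $n$, so the count is $O(n^m)$ after multiplying by the finite $\#S(\sigma,N)$.

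The main obstacle throughout is the bookkeeping that converts the approximate relation $\sim$, which is not transitive, into the finitely many exact patterns above. In particular, one must verify that the ``same versus reversed'' dichotomy in part (b), and the implication ``shared block-value pair $\Rightarrow$ $\sigma$-partner'' in part (c), can only fail on configurations that are themselves confined to $O(n^m)$ tuples, because each such failure forces an additional near-coincidence among the coordinates. Once it is granted that each difference pattern contributes at most $n^{\#B}$ tuples, the remaining estimates are exactly the elementary counting bounds already packaged in Lemmas~\ref{gaussian.l0}, \ref{comb.l0} and \ref{gaussian.l2}, and at no point is independence or Gaussianity invoked.
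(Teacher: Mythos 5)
Your proposal is correct, and for parts (a) and (b) it follows essentially the same route as the paper: reduce to $N=0$ by summing over the finitely many difference patterns, identify a $0$-pair matched tuple with an offspring word $B$ of the pattern word of the pairing, and then bound $\#B\le m$ -- via Lemma~\ref{comb.l0}(a) in case (a) because the pattern word is not Catalan, and via Lemma~\ref{comb.l0}(c) in case (b) because a non-Catalan tuple violates \eqref{comb.l0.eq2} or \eqref{comb.l0.eq3} for some block of $\sigma$ (your ``same versus reversed orientation'' dichotomy is exactly the content of that violation). Where you genuinely diverge is part (c). The paper stays entirely word-combinatorial: it first disposes of $\pi\notin NC_2(2m)$ by part (a), and for $\pi\in NC_2(2m)\setminus\{\sigma\}$ invokes Lemma~\ref{comb.l0}(b), which says that offsprings with $m+1$ letters of distinct Catalan words are themselves distinct, so a tuple that is simultaneously an offspring of both pattern words has at most $m$ letters. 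You instead use the parametrization of Lemma~\ref{gaussian.l5} and a metric separation argument: when the $m+1$ block values of the underlying $0$-Catalan tuple are pairwise separated by a suitable multiple of $mN$ (the perturbation bound coming from \eqref{gaussian.eq9}), the double-tree structure of the Catalan walk forces any pairing realizing $N$-pair-matching to coincide with $\sigma$, so the exceptional tuples are confined to the $O(n^m)$ set where two block values nearly coincide. Both arguments are sound; yours does not need the case split on whether $\pi$ is non-crossing and handles general $N$ directly in part (c) without the reduction to $N=0$, at the cost of importing the double-tree fact and the quantitative perturbation bound, while the paper's version is shorter once Lemma~\ref{comb.l0}(b) is available but leans on the (asserted, not fully written) reduction to $N=0$.
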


\begin{proof}
We shall prove the claims only for $N=0$. The generalization to the case when $N\neq0$ is trivial, and follows, for example, by  arguments similar to the ones that allow replacement of $u_l$ and $v_l$ by zero in \eqref{gaussian.eq3} and \eqref{gaussian.eq4} respectively.
\begin{proof}[Proof of (a)]
Fix $\pi\in P(2m)\setminus NC_2(2m)$. Let $A=a_1\ldots a_{2m}$ be a word of length $2m$ such that for $j<k$, $a_j=a_k$ if and only if $(j,k)\in\pi$. That is, $A$ is a pair matched word. It is easy to see that any $0$-pair matched tuple is actually an offspring word (considering the entries to be letters) of $A$.  Since $A$ is not a Catalan word, by Lemma \ref{comb.l0} (a), it follows that for all offspring word $B$ of $A$,
\begin{equation}\label{gaussian.l7.eq1}
 \#B\le m\,.
\end{equation}
This completes the proof of (a).
\end{proof}
\begin{proof}[Proof of (b)]
Let $A=a_1\ldots a_{2m}$ be a word of length $2m$ such that for $j<k$, $a_j=a_k$ if and only if $(j,k)\in\sigma$. It is easy to see that a $0$-pair matched tuple which is not a $0$-Catalan tuple generates an offspring word $B=b_1\ldots b_{2m}$ such that at least one of \eqref{comb.l0.eq2} or \eqref{comb.l0.eq3} is violated for some $(j,k)\in\sigma$. Therefore, by Lemma \ref{comb.l0} (c), \eqref{gaussian.l7.eq1} follows, thus proving (b).
\end{proof}
\begin{proof}[Proof of (c)]
Once again, we prove this for $N=0$. If $\pi\notin NC_2(2m)$, then the claim follows by part (a) which has already been proved. So assume that $\pi\in NC_2(2m)$. Let $A_1$ and $A_2$ be Catalan words of length $2m$ corresponding to $\sigma$ and $\pi$ respectively, as in the proof of (a). Since $\pi\neq\sigma$, $A_1$ and $A_2$ are distinct, that is neither of them can be obtained from the other by relabeling letters. Lemma \ref{comb.l0} (b) implies that if $B$ is an offspring of both $A_1$ and $A_2$, then \eqref{gaussian.l7.eq1} holds, thereby establishing (c).
\end{proof}
Since all the claims have been established, this completes the proof of the lemma.
\end{proof}

\begin{remark}
 The number on the left hand side of \eqref{gaussian.eq14} is not necessarily zero. For example, $(1,3,2,1,3,2)$ is $1$-pair matched but not $1$-Catalan corresponding to $\{(1,6),(2,5),(3,4)\}$.
\end{remark}

\begin{proof}[Proof of Theorem \ref{gaussian.t1}]Our first task is to show the existence of a probability measure $\mu$ whose odd moments are zero and the $2m$-th moment is $\beta_{2m}$ for all $m\ge1$. To that end, define the expected ESD $\hat\mu_n$ of $A_n/\sqrt n$ as
\[
 \hat\mu_n(B):=\frac1n\sum_{j=1}^nP(\lambda_j/\sqrt n\in B),\,n\ge1\,,
\]
for all Borel sets $B$. Clearly,
\[
 \int x^m\hat\mu_n(dx)=n^{-(m/2+1)}E\left[\Tr(A_n^{m})\right],\,m,n\ge1\,,
\]
which is zero if $m$ is odd.
If it can be shown that for $m\ge1$,
\begin{equation}\label{gaussian.eq1}
 \lim_{n\to\infty}n^{-(m+1)}E\left[\Tr(A_n^{2m})\right]=\beta_{2m}\,,
\end{equation}
then existence of $\mu$ will follow. In addition, the above is also a significant step in proving that $\mu_n$ converges in probability to $\mu$. We shall come to that a moment later. Before that let us quickly dispose off the issue of uniqueness of $\mu$. It will be shown in Section \ref{sec:stieltjes} that $\beta_{2m}$ are the moments of a compactly supported probability measure which automatically ensures uniqueness; see Remark \ref{rem:bounded}. However, for the sake of completeness, we provide a quick proof of uniqueness via Carleman's condition. In view of Carleman's condition, it suffices to show that
\begin{equation}\label{carleman}
 \sum_{m=1}^\infty\beta_{2m}^{-1/2m}=\infty\,.
\end{equation}
By \eqref{eq.ub}, it follows that
\begin{eqnarray*}
 \sum_{m=1}^\infty\beta_{2m}^{-1/2m}&\ge&{\bar R}^{-1/2}\sum_{m=1}^\infty\left(m!\,\,\#NC_2(2m)\right)^{-1/2m}\\
&\ge&\frac12{\bar R}^{-1/2}\sum_{m=1}^\infty\left(m!\right)^{-1/2m}\\
&\ge&\frac12{\bar R}^{-1/2}\sum_{m=1}^\infty m^{-1}\,,
\end{eqnarray*}
the inequality in the last line following from the fact that $m!\le m^{2m}$ for all $m\ge1$. This establishes \eqref{carleman}. Consequently, there is at most one measure $\mu$ whose odd moments vanish and the $2m$-th moment is $\beta_{2m}$. Thus, to complete the proof, we need to show \eqref{gaussian.eq1} and that
\begin{equation}\label{eq.var}
  \lim_{n\to\infty}n^{-2(m+1)}\Var\left[\Tr(A_n^{2m})\right]=0\,.
\end{equation}

 We now proceed towards showing \eqref{gaussian.eq1}.
Recall that
\[
 \Tr(A_n^{2m})=\sum_{i_1,\ldots,i_{2m}=1}^nX_{i_1,i_2}\ldots X_{i_{2m-1},i_{2m}}X_{i_{2m},i_1}\,,
\]
and therefore
\[
 E\left[\Tr(A_n^{2m})\right]=\sum_{i\in\{1,\ldots,n\}^{2m}}E_i\,,
\]
where
\[
 E_i:=E\left[\prod_{j=1}^{2m}X_{i_{j-1},i_j}\right],\,i\in\bbz^{2m}\,,
\]
the convention being that for $i=(i_1,\ldots,i_{2m})\in\bbz^{2m}$,
$i_0:=i_{2m}$.
Recall the definition of $S(\sigma,N)$ from \eqref{eq.defsigmaN} for $\sigma\in NC_2(2m)$.
Set
\[
 \beta_{2m}^{(N)}:=\sum_{\sigma\in NC_2(2m)}\sum_{k\in S(\sigma,N)}\prod_{(u,v)\in\sigma}R(k_u,k_v),\,m,N\ge1\,.
\]
In view of \eqref{eq.ub}, it follows that
\[
 \lim_{N\to\infty}\beta_{2m}^{(N)}=\beta_{2m}\,.
\]
Fix $\vep>0$. Let $N$ be such that
\[
 |R(u,v)|\le\vep
\]
whenever $u\wedge v\ge N$, and
\begin{equation}\label{gaussian.t1.eq1}
 \left|\beta_{2m}^{(N)}-\beta_{2m}\right|\le\vep\,.
\end{equation}
Recall the definitions of ``$\sim$'', $N$-pair matched and $N$-Catalan words from \eqref{eq.defsim} and the text following it. Let $PM$ denote set of the $N$-pair matched tuples in $\{1,\ldots,n\}^{2m}$, and let $NPM$ denote the corresponding set for the ones that are not $N$-pair matched. 
Write
\begin{eqnarray*}
 E[\Tr(A_n^{2m})]&=&\sum_{i\in PM}E_i+\sum_{i\in NPM}E_i\\
&=:&T_1+T_2\,.
\end{eqnarray*}
We shall apply Wick's formula for estimating $E_i$. Fix $i\in NPM$. Recalling that $P(2m)$ is the set of all pair partitions of $\{1,\ldots,2m\}$, given $\pi\in P(2m)$ there exists $(u,v)\in\pi$ such that
\[
 (i_{u-1},i_u)\not\sim(i_{v-1},i_v)\,.
\]
Therefore, every $\pi\in P(2m)$ can be written as
\begin{equation}\label{eq.pi}
 \pi=\{(u_1^\pi,v_1^\pi),\ldots,(u_m^\pi,v_m^\pi)\}\,,
\end{equation}
where
\begin{equation}\label{eq.notsim}
 (i_{u^\pi_1-1},i_{u_1^\pi})\not\sim(i_{v_1^\pi-1},i_{v_1^\pi})\,.
\end{equation}
Notice that
\begin{equation}\label{eq.connectstar}
 E[X_{i,j}X_{k,l}]=R((i,j)\star(k,l))\,,
\end{equation}
where $\star$ is as defined in \eqref{gaussian.defstar}. By Wick's formula, it follows that
\begin{eqnarray*}
 |E_i|&\le&\vep\sum_{\pi\in P(2m)}\prod_{j=2}^m\left|E\left[X_{i_{u^\pi_j-1},i_{u^\pi_j}}X_{i_{v^\pi_j-1},i_{v^\pi_j}}\right]\right|\\
&=&\vep\sum_{\pi\in P(2m)}\prod_{j=2}^m\left|R((i_{u^\pi_j-1},i_{u^\pi_j})\star(i_{v^\pi_j-1},i_{v^\pi_j}))\right|\,,
\end{eqnarray*}
the equality following by \eqref{eq.connectstar}. Therefore,
\begin{eqnarray}
 |T_2|&\le&\sum_{u,v\in\bbz^{m-1}}\vep\#V_n(u,v)\prod_{j=1}^{m-1}|R(u_j,v_j)|\,,\nonumber\\
&\le&\vep C(m)n^{m+1}\bar R^{m-1}\,,\label{gaussian.neweq6}
\end{eqnarray}
where $V_n$ and $C(m)$ are as in \eqref{gaussian.eq2} and $\bar R$ is as in \eqref{eq.defrbar}, the last inequality following by Lemma \ref{gaussian.l3}. Thus,
\begin{equation}\label{gaussian.eq11}
 \limsup_{n\to\infty}{n^{-(m+1)}}|T_2|\le\vep C(m)\bar R^{m-1}\,.
\end{equation}

We now work with $T_1$. For $\sigma\in NC_2(2m)$, let
\[
 CT(\sigma):=\{\mbox{Catalan tuples in }\{1,\ldots,n\}^{2m}\mbox{ corresponding to }\sigma\}\,,
\]
\begin{eqnarray*}
 CT^\prime(\sigma)&:=&CT(\sigma)\setminus\Biggl(\bigcup_{\pi\in P(2m)\setminus\{\sigma\}}\{\mbox{Pair matched tuples in }\{1,\ldots,n\}^{2m}\\
&&\,\,\,\,\,\,\,\,\,\,\,\,\,\,\,\,\mbox{ corresponding to }\pi\}\Biggr)\,,
\end{eqnarray*}
and
\[
 NCT:=PM\setminus\Biggl(\bigcup_{\sigma\in NC_2(2m)}CT^\prime(\sigma)\Biggr)\,.
\]
Clearly,
\begin{eqnarray*}
 &&PM\setminus\Biggl(\bigcup_{\sigma\in NC_2(2m)}CT(\sigma)\Biggr)\\
&\subset&\Biggl(\bigcup_{\pi\in P(2m)\setminus NC_2(2m)}\{\mbox{Pair matched tuples corresponding to }\pi\}\Biggr)\\
&&\cup\Biggl(\bigcup_{\sigma\in NC_2(2m)}\Bigl[\{\mbox{Pair matched tuples corresponding to }\sigma\}\\
&&\,\,\,\,\,\,\,\,\,\,\,\,\,\,\,\,\setminus\{\mbox{Catalan tuples corresponding to }\sigma\}\Bigr]\Biggr)\,.
\end{eqnarray*}
By Lemma \ref{gaussian.l7} (a) and (b) respectively, the cardinality of the first and the second set on the right hand side is $o(n^{m+1})$.
Also,
\begin{eqnarray*}
 &&\Biggl(\bigcup_{\sigma\in NC_2(2m)}CT(\sigma)\Biggr)\setminus\Biggl(\bigcup_{\sigma\in NC_2(2m)}CT^\prime(\sigma)\Biggr)\\
&\subset&\bigcup_{\sigma\in NC_2(2m)}\Biggl(CT(\sigma)\setminus CT^\prime(\sigma)\Biggr)\,.
\end{eqnarray*}
By Lemma \ref{gaussian.l7} (c), it follows that the cardinality of the set on the right hand side is $o(n^{m+1})$. All the above put together imply that
\begin{equation}\label{gaussian.eq12}
 \lim_{n\to\infty}n^{-(m+1)}\#NCT=0\,.
\end{equation}
Clearly, by definition, if $\sigma,\sigma^\prime\in NC_2(2m)$ and $\sigma\neq\sigma^\prime$, then
\[
 CT^\prime(\sigma)\cap CT^\prime(\sigma^\prime)=\phi\,.
\]
Therefore,
\begin{eqnarray*}
 T_1&=&\sum_{\sigma\in NC_2(2m)}\sum_{i\in CT^\prime(\sigma)}E_i+\sum_{i\in NCT}E_i\\
&=:&T_{11}+T_{12}\,.
\end{eqnarray*}
By Wick's formula, it follows that
\[
 |E_i|\le(2m)!\mbox{ for all }i\in\bbn^{2m}\,,
\]
which in view of \eqref{gaussian.eq12}, shows that
\begin{equation}\label{gaussian.eq15}
 \lim_{n\to\infty}n^{-(m+1)}T_{12}=0\,.
\end{equation}
Fix $\sigma\in NC_2(2m)$ and $i\in CT^\prime(\sigma)$. By definition of $CT^\prime(\sigma)$, it is easy to see that
every $\pi\in P(2m)\setminus\{\sigma\}$ can be written as \eqref{eq.pi} such that \eqref{eq.notsim} holds. Therefore,
\begin{eqnarray}\label{gaussian.neweq4}
 &&\left|E_i-\prod_{(u,v)\in\sigma}E\left[X_{i_u-1,i_u}X_{i_v-1,i_v}\right]\right|\\
\nonumber&\le&\vep\sum_{\pi\in P(2m)\setminus\{\sigma\}}\prod_{j=2}^m\left|R((i_{u^\pi_j-1},i_{u^\pi_j})\star(i_{v^\pi_j-1},i_{v^\pi_j}))\right|\,.
\end{eqnarray}
By Lemma \ref{gaussian.l5}, there exist a $0$-Catalan tuple $j$ and a $k\in S(\sigma,N)$ satisfying \eqref{gaussian.eq9}. Fix $k\in S(\sigma,N)$, $n>4mN$ and define the sets
\begin{eqnarray*}
B_1(k)&=&\Bigl\{i\in\bbz^{2m}:\mbox{\eqref{gaussian.eq9} holds for some }0\mbox{-Catalan tuple }\\
&&\,\,\,\,\,\,\,\,\,\,j\in\{1,\ldots,n\}^{2m}\mbox{ corresponding to }\sigma\Bigr\}\,,\\
B_2(k)&=&\Bigl\{i\in\{1,\ldots,n\}^{2m}:\mbox{\eqref{gaussian.eq9} holds for some }0\mbox{-Catalan tuple }\\
&&\,\,\,\,\,\,\,\,\,\,j\in\{1,\ldots,n\}^{2m}\mbox{ corresponding to }\sigma\Bigr\}\,,\\
B_3(k)&=&\Bigl\{i\in\bbz^{2m}:\mbox{\eqref{gaussian.eq9} holds for some }0\mbox{-Catalan tuple }\\
&&\,\,\,\,\,\,\,\,\,\,j\in\{4mN+1,\ldots,n-4mN\}^{2m}\mbox{ corresponding to }\sigma\Bigr\}\,,\\
B_4(k)&=&\Bigl\{i\in CT^\prime(\sigma):\mbox{\eqref{gaussian.eq9} holds for some }0\mbox{-Catalan tuple }\\
&&\,\,\,\,\,\,\,\,\,\,j\in\{1,\ldots,n\}^{2m}\mbox{ corresponding to }\sigma\Bigr\}\,,\\
B_5(k)&=&\Bigl\{i\in CT^\prime(\sigma):\mbox{\eqref{gaussian.eq9}, \eqref{gaussian.eq22} and \eqref{gaussian.eq23} hold for some }0\mbox{-Catalan }\\
&&\,\,\,\,\,\,\,\,\,\,\mbox{tuple }j\in\{1,\ldots,n\}^{2m}\mbox{ corresponding to }\sigma\Bigr\}\,.
\end{eqnarray*}
Close inspection reveals that
\[
B_3(k)\subset B_2(k)\subset B_1(k)\,, 
\]
and
\begin{eqnarray*}
\#B_1(k)&=&n^{m+1}\,,\\
\#B_3(k)&=&(n-8mN)^{m+1}\,.
\end{eqnarray*}
Therefore,
\begin{equation}\label{gaussian.neweq1}
 \lim_{n\to\infty}n^{-(m+1)}\#B_2(k)=1\,.
\end{equation}
Lemma \ref{gaussian.l5} asserts that
\begin{eqnarray}
CT(\sigma)&=&\bigcup_{k\in S(\sigma,N)}B_2(k)\,,\nonumber\\
CT^\prime(\sigma)&=&\bigcup_{k\in S(\sigma,N)}B_4(k)\,.\label{gaussian.neweq7}
\end{eqnarray}
An outcome of the above is that
\begin{equation}\label{gaussian.neweq2}
 B_2(k)\setminus B_4(k)\subset CT(\sigma)\setminus CT^\prime(\sigma)\,.
\end{equation}
By Lemma \ref{gaussian.l7} (c), it follows that 
\[
 \lim_{n\to\infty}n^{-(m+1)}\#\left[CT(\sigma)\setminus CT^\prime(\sigma)\right]=0\,,
\]
which along with \eqref{gaussian.neweq1} and \eqref{gaussian.neweq2} show that
\[
 \lim_{n\to\infty}n^{-(m+1)}\#B_4(k)=1\,.
\]
Elementary combinatorics shows that
\begin{equation}\label{gaussian.neweq5}
 \#[B_4(k)\setminus B_5(k)]=o(n^{m+1})\,.
\end{equation}
Therefore,
\begin{equation}\label{gaussian.neweq3}
 \lim_{n\to\infty}n^{-(m+1)}\#B_5(k)=1\,.
\end{equation}
By Lemma \ref{gaussian.l8}, it follows that
\[
 \prod_{(u,v)\in\sigma}E\left[X_{i_u-1,i_u}X_{i_v-1,i_v}\right]=\prod_{(u,v)\in\sigma}R(k_u,k_v),\,i\in B_5(k)\,.
\]
Similar arguments as those leading to \eqref{gaussian.neweq6}, in view of \eqref{gaussian.neweq4} and the fact that the family of sets $(B_5(k):k\in S(\sigma,N))$ are disjoint, now establish
\[
 \left|\sum_{k\in S(\sigma,N)}\sum_{i\in B_5(k)}E_i-\sum_{k\in S(\sigma,N)}\#B_5(k)\prod_{(u,v)\in\sigma}R(k_u,k_v)\right|\le \vep C(m)n^{m+1}{\bar R}^{m-1}\,,
\]
where $C(m)$ and $\bar R$  are as in \eqref{gaussian.eq2} and \eqref{eq.defrbar} respectively.
From here, the fact that $S(\sigma,N)$ is a finite set, and that \eqref{gaussian.neweq3} holds for all $k\in S(\sigma,N)$, imply that
\[
 \limsup_{n\to\infty}\left|n^{-(m+1)}\sum_{k\in S(\sigma,N)}\sum_{i\in B_5(k)}E_i-\sum_{k\in S(\sigma,N)}\prod_{(u,v)\in\sigma}R(k_u,k_v)\right|\le
\]
\[
 \vep C(m){\bar R}^{m-1}\,.
\]
Equation \eqref{gaussian.neweq5} allows us to replace $B_5(k)$ by
$B_4(k)$ in the above equation which along with
\eqref{gaussian.neweq7} and the observation that the sets on the
right hand side are pairwise disjoint, lead us to
\begin{equation}\label{var.eq4}
 \limsup_{n\to\infty}\left|n^{-(m+1)}\sum_{i\in CT^\prime(\sigma)}E_i-\sum_{k\in S(\sigma,N)}\prod_{(u,v)\in\sigma}R(k_u,k_v)\right|
\end{equation}
\[
 \le \vep C(m){\bar R}^{m-1}\,.
\]
Adding over $\sigma\in NC_2(2m)$ yields that
\[
 \limsup_{n\to\infty}\left|n^{-(m+1)}T_{11}-\beta^{(N)}_{2m}\right|\le \vep C(m){\bar R}^{m-1}\#NC_2(2m)\,.
\]
Since $\vep$ is arbitrary, the above in view of \eqref{gaussian.t1.eq1}, \eqref{gaussian.eq11} and \eqref{gaussian.eq15} complete the proof of \eqref{gaussian.eq1}.

To complete the proof, \eqref{eq.var} needs to be shown,
or equivalently, in view of  \eqref{gaussian.eq1},
\begin{equation}\label{gaussian.var}
 \lim_{n\to\infty}n^{-2(m+1)}E\left[\left\{\Tr(A_n^{2m})\right\}^2\right]=\beta_{2m}^2\,.
\end{equation}
Notice that
\begin{eqnarray*}
 E\left[\left\{\Tr(A_n^{2m})\right\}^2\right]&=&\sum_{i\in\{1,\ldots,n\}^{4m}}E\left[\prod_{j=1}^{4m}X_{i_{\gamma(j)},i_j}\right]\,,
\end{eqnarray*}
where
\[
 \gamma(j):=\left\{\begin{array}{ll}
                    j-1,&j\in\{1,\ldots,4m\}\setminus\{1,2m+1\}\,,\\
		    2m,&j=1\,,\\
		    4m,&j=2m+1\,.
                   \end{array}
\right.
\]
Denote for $i\in\{1,\ldots,n\}^{4m}$,
\[
E_i:=E\left[\prod_{j=1}^{4m}X_{i_{\gamma(j)},i_j}\right]\,.
\]
Once again, the above expectation can be computed via Wick's formula. Therefore, a similar combinatorial analysis as that for the expected trace goes through, except that now offspring words are replaced by compound offspring words. A sketch of the proof is given below.

Fix $N\ge1$, and say that $i\in\{1,\ldots,n\}^{4m}$ is $N$-pair matched if there exists a pairing $\pi$ of $\{1,\ldots,4m\}$ such that
\[
 \left|i_j\wedge i_{\gamma(j)}-i_{\pi(j)}\wedge i_{\gamma(\pi(j))}\right|\vee\left|i_j\vee i_{\gamma(j)}-i_{\pi(j)}\vee i_{\gamma(\pi(j))}\right|\le N,\,j=1,\ldots,4m\,.
\]
Let $PM$ and $NPM$ denote the sets of $N$-pair matched tuples and non-$N$-pair matched tuples respectively, in $\{1,\ldots,n\}^{4m}$ with this new definition of ``pair matched''.
By arguments similar to those leading to \eqref{gaussian.eq11}, Lemma \ref{comb.l2} now playing the role of Lemma \ref{gaussian.l1}, it follows that
\begin{equation}\label{var.eq1}
 \lim_{N\to\infty}\limsup_{n\to\infty}n^{-(m+1)}\left|\sum_{i\in NPM}E_i\right|=0\,.
\end{equation}
Say that $i\in\{1,\ldots,n\}^{4m}$ is Catalan($\sigma_1,\sigma_2$) for some $\sigma_1,\sigma_2\in NC_2(2m)$ if $(i_1,\ldots,i_{2m})$ is $N$-Catalan with respect to $\sigma_1$, and $(i_{2m+1},\ldots,i_{4m})$ is $N$-Catalan with respect to $\sigma_2$. By Lemma \ref{comb.l1}, it follows that
\[
 \lim_{n\to\infty}n^{-(m+1)}\#\left(PM\bigtriangleup\left(\bigcup_{\sigma_1,\sigma_2\in NC_2(2m)} CT(\sigma_1,\sigma_2)\right)\right)=0\,,
\]
where
\[
CT(\sigma_1,\sigma_2):=\left\{i\in\{1,\ldots,n\}^{4m}:i\mbox{ is Catalan}(\sigma_1,\sigma_2)\right\},\,\sigma_1,\sigma_2\in NC_2(2m)\,.
\]
By standard combinatorial arguments, it follows from the above equation that
\begin{equation}\label{var.eq3}
 \lim_{n\to\infty}n^{-(m+1)}\left|\sum_{i\in PM}E_i-\sum_{\sigma_1,\sigma_2\in NC_2(2m)}\,\sum_{i\in CT(\sigma_1,\sigma_2)}E_i\right|=0\,.
\end{equation}
The arguments leading to \eqref{var.eq4}, once again with the aid of Lemma \ref{gaussian.l8}, imply that for $\sigma_1,\sigma_2\in NC_2(2m)$,
\begin{equation}\label{var.eq5}
 \lim_{N\to\infty}\limsup_{n\to\infty}\Biggl|n^{-(m+1)}\sum_{i\in CT(\sigma_1,\sigma_2)}E_i\,\,\,\,\,
\end{equation}
\[
 -\sum_{k^i\in S(\sigma_i,N)}\left(\prod_{(u,v)\in\sigma_1}R(k^1_u,k^1_v)\right)\left(\prod_{(u,v)\in\sigma_2}R(k^2_u,k^2_v)\right)\Biggr|=0\,.
\]
Clearly,
\begin{eqnarray*}
&&\sum_{\sigma_1,\sigma_2\in NC_2(2m)}\,\sum_{k^i\in S(\sigma_i,N)}\left(\prod_{(u,v)\in\sigma_1}R(k^1_u,k^1_v)\right)\left(\prod_{(u,v)\in\sigma_2}R(k^2_u,k^2_v)\right)\\
&=&\prod_{i=1}^2\left(\sum_{\sigma_i\in NC_2(2m)}\sum_{k^i\in S(\sigma_i,N)}\prod_{(u,v)\in\sigma_i}R(k^i_u,k^i_v)\right)\\
&=&\beta_{2m}^2\,.
\end{eqnarray*}
This, in view of \eqref{var.eq1} to \eqref{var.eq5}, establishes \eqref{gaussian.var}, and thus completes the proof.
\end{proof}

\section{The linear process} \label{sec:inv}
In this section, we study the ESD of a random matrix whose entries are generated from a linear process with independent random variables as the input sequence. In particular, let $\{\epsilon_{i,j}:i,j\in\bbz\}$ be independent, mean zero, variance one random variables which satisfy the Pastur condition
\begin{equation}\label{inv.pastur}
\lim_{n\to\infty}\frac{1}{n^2}\sum_{i,j=1}^nE[\epsilon_{i,j}^2\one( |\epsilon_{i,j}|>\vep\sqrt{n})]=0\,\mbox{ for all }\vep>0\,. 
\end{equation}
Let $\{c_{k,l}:k,l\in\bbz\}$ be a collection of deterministic real numbers such that 
\begin{equation}\label{inv.summable}
0< \sum_{k,l\in\bbz} |c_{k,l}|<\infty\,,
\end{equation}
and
\begin{equation}\label{inv.sym}
 c_{k,l}=c_{l,k},\,k,l\in\bbz\,.
\end{equation}
Define
\begin{equation}\label{inv.defZ}
 Z_{i,j}:=\sum_{k,l\in\bbz}c_{k,l}\epsilon_{i-k,j-l},\,i,j\in\bbz\,,
\end{equation}
where the sum on the right hand side converges in $L^2$ because $c_{k,l}$ are square summable, which is a consequence of \eqref{inv.summable}. While the family of random variables $\{Z_{i,j}:i,j\in\bbz\}$ need not be stationary because the distributions of $\epsilon_{i,j}$ are not necessarily identical, it is easy to see that
\begin{eqnarray}
E(Z_{i,j})&=&0,\,i,j\in\bbz\,,\nonumber\\
E(Z_{i,j}Z_{i-u,j+v})&=&\sum_{k,l\in\bbz}c_{k,l}c_{k-u,l+v}=:R(u,v),\,i,j,u,v\in\bbz\,.\label{inv.defR}
\end{eqnarray}
Define the $n\times n$ symmetric random matrix $A_n$ and  $\mu_n$, the ESD of $A_n/\sqrt n$ by \eqref{gaussian.defA_n} and \eqref{gaussian.defmu_n} respectively. 
The assumption \eqref{inv.summable} ensures that
\[
 \sum_{u\in\bbz}\sum_{v\in\bbz}|R(u,v)|\le\sum_{k\in\bbz}\sum_{l\in\bbz}\left[|c_{k,l}|\sum_{u\in\bbz}\sum_{v\in\bbz}|c_{k-u,l+v}|\right]=\left[\sum_{k,l\in\bbz} |c_{k,l}|\right]^2<\infty\,.
\]
Therefore, we can and do define $\beta_{2m}$ by 
\eqref{eq.defbeta}.
Let $\mu$ be the unique probability measure whose odd moments are all zero, and for $m\ge1$, the $2m$-th moment equals $\beta_{2m}$.

The content of this section is the following result.
 
\begin{theorem}\label{inv.main}
Under assumptions \eqref{inv.pastur} to \eqref{inv.sym}, $\mu_n$ converges weakly in probability to $\mu$.
\end{theorem}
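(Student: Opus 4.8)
The plan is to establish an invariance principle: the limiting spectral distribution depends on the inputs only through their first two moments, which reduces Theorem~\ref{inv.main} to the Gaussian case already settled by Theorem~\ref{gaussian.t1}. Write $W_n:=A_n/\sqrt n$ and fix $z$ with $\eta:=\Im z>0$; let $s_n(z):=\frac1n\Tr(W_n-zI)^{-1}$ be the Stieltjes transform of $\mu_n$. By the Stieltjes continuity theorem it suffices to show, for each such $z$, that $s_n(z)\to s_\mu(z)$ in probability, where $s_\mu$ is the Stieltjes transform of $\mu$; since $\mu$ is deterministic this is equivalent to $\mu_n\to\mu$ weakly in probability. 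I would obtain this from $E[s_n(z)]\to s_\mu(z)$ together with $\Var(s_n(z))\to0$.

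The Gaussian baseline is immediate. If the inputs are replaced by i.i.d.\ standard Gaussians $g_{a,b}$, then $Z^g_{i,j}=\sum_{k,l}c_{k,l}g_{i-k,j-l}$ is a stationary, mean-zero, variance-one Gaussian field with covariance the $R$ of \eqref{inv.defR}; a short reindexing using \eqref{inv.sym} shows $R(u,v)=R(v,u)$, so Assumption~1 holds, and Assumption~2 holds by the summability estimate preceding the theorem. Thus Theorem~\ref{gaussian.t1} applies to the Gaussian model, giving $E[s_n^g(z)]\to s_\mu(z)$, and \eqref{eq.var} gives $\Var(s_n^g(z))\to0$; consequently $s_n^g(z)\to s_\mu(z)$ in $L^2$ and, since $s_n^g(\bar z)=\overline{s_n^g(z)}$, also $E[s_n^g(z)s_n^g(\bar z)]\to s_\mu(z)s_\mu(\bar z)$.

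The heart of the argument is the comparison of the general inputs $\epsilon_{a,b}$ with the Gaussian inputs $g_{a,b}$ via the Lindeberg-type principle of \citet{chatterjee:2005}. Since each entry of $A_n$ is an infinite linear combination of the inputs, I would first truncate the coefficients, setting $c^{(M)}_{k,l}:=c_{k,l}\one(|k|\vee|l|\le M)$ and letting $\delta_M^2:=\sum_{|k|\vee|l|>M}c_{k,l}^2$. A Hoffman--Wielandt estimate bounds the $2$-Wasserstein distance between the two ESDs, yielding $E|s_n^{(M)}(z)-s_n(z)|\le\eta^{-2}\delta_M$ uniformly in $n$ and in the law of the inputs (it uses only the unit variances), so that $M$ may be sent to infinity at the end. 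For fixed $M$, $W_n$ is a linear function of finitely many inputs, each entering $O(M^2)$ entries; the functionals $s_n(z)$ and $s_n(z)s_n(\bar z)$ are therefore smooth in the inputs, and the resolvent bound $\|(W_n-zI)^{-1}\|\le\eta^{-1}$ makes their first three derivatives with respect to a single input of order $O_{M,\eta}(n^{-3/2})$, $O_{M,\eta}(n^{-2})$ and $O_{M,\eta}(n^{-5/2})$ respectively. Feeding these bounds into Chatterjee's principle in its truncated Lindeberg form, with cut-off at level $\vep\sqrt n$, the comparison error splits into a tail part dominated by $\sum_{a,b}E[\epsilon_{a,b}^2\one(|\epsilon_{a,b}|>\vep\sqrt n)]$ and a bulk part of order $O_{M,\eta}(\vep)$. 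The former tends to $0$ as $n\to\infty$ precisely by the Pastur condition \eqref{inv.pastur}, and the latter vanishes as $\vep\to0$; hence $E[s_n^{(M),\epsilon}(z)]-E[s_n^{(M),g}(z)]\to0$, and likewise for the second moment.

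Assembling the three ingredients finishes the proof. At range $M$ the Gaussian model has LSD $\mu^{(M)}$ (Theorem~\ref{gaussian.t1} with covariance $R^{(M)}$), and $s_{\mu^{(M)}}(z)\to s_\mu(z)$ because $\beta^{(M)}_{2m}\to\beta_{2m}$ with both limits determined by their moments. Combining this with the Chatterjee comparison at range $M$ and the uniform truncation bound $\eta^{-2}\delta_M$, a double limit ($n\to\infty$, then $M\to\infty$) gives $E[s_n^\epsilon(z)]\to s_\mu(z)$ and $E[s_n^\epsilon(z)s_n^\epsilon(\bar z)]\to s_\mu(z)s_\mu(\bar z)$, whence $\Var(s_n^\epsilon(z))\to0$ and $s_n^\epsilon(z)\to s_\mu(z)$ in probability, as required. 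The main obstacle is the Chatterjee step: in contrast to the classical Wigner setting, a single input $\epsilon_{a,b}$ feeds into many entries of $A_n$, so the derivative bounds must account for this multiplicity, and because only two moments are assumed one must use the truncated Lindeberg version and check that the error is governed exactly by \eqref{inv.pastur}.
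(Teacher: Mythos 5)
Your proposal is correct and follows essentially the same route as the paper: truncate the linear process at a finite range, compare the truncated general-input matrix with its Gaussian counterpart via Chatterjee's Lindeberg principle at the level of Stieltjes transforms (with the Pastur condition controlling the tail term), invoke Theorem~\ref{gaussian.t1} for the truncated Gaussian model, and remove the truncation by a Hoffman--Wielandt bound together with $\beta^{(M)}_{2m}\to\beta_{2m}$. The only cosmetic difference is that the paper measures the truncation error in the L\'evy metric while you phrase it as a bound on $|s_n^{(M)}(z)-s_n(z)|$; both rest on the same Hoffman--Wielandt inequality.
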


\begin{proof}
We split the proof into two parts. For a finite linear process, we
show that the Stieltjes transform of the ESD of a
matrix made up of Gaussian random variables and another with general
entries satisfying \eqref{inv.pastur} are close to each other using
Lindeberg type argument developed in \cite{chatterjee:2005}. For the
second part, we  show that the L\'evy distance between the truncated
linear process and the original process goes to zero as the truncation level
goes to infinity. 

Fix $m\ge 1$ and let 
$$Z_{i,j}^{(m)}=\sum_{k,l=-m}^m c_{k,l}\epsilon_{i-k,j-l} \text{ for } i,j\ge1\,.$$ 
Define 
\[
 A_n^{(m)}:=((Z_{i,j}^{(m)}))_{n\times n},\,n\ge1\,.
\]
We next define a similar random matrix model, but with Gaussian entries. Let $(G_{i,j}:i,j\in\bbz)$ be i.i.d. standard Gaussian, and set
$$Y_{i,j}^{(m)}=\sum_{k,l=-m}^m c_{k,l}G_{i-k,j-l} \text{ for } i,j\ge1\,.$$ 
Denote
\[
 B_n^{(m)}:=((Y_{i,j}^{(m)}))_{n\times n},\,n\ge1\,.
\]
Assumption \eqref{inv.sym} ensures that the matrices $A_n^{(m)}$ and $B_n^{(m)}$ are symmetric.
By the Lindeberg type argument of ``replacing $\epsilon_{i,j}$ by $G_{i,j}$ one at a time'', it can be shown using the Pastur condition \eqref{inv.pastur} that
\begin{equation}\label{inv.neweq1}
\frac1n\left[\Tr\left(\left(zI_n-\frac{A_n^{(m)}}{\sqrt n}\right)^{-1}\right)-\Tr\left(\left(zI_n-\frac{B_n^{(m)}}{\sqrt n}\right)^{-1}\right)\right]\prob0\,,
\end{equation}
as $n\to\infty$, for all $z$ in the complex plane with non-zero imaginary part. The arguments for above are very similar to those in Subsections 2.3 and 2.4 of \cite{chatterjee:2005} and hence are omitted.

It is easy to see that
\[
 R^{(m)}(u,v):=E(Y^{(m)}_{i,j}Y^{(m)}_{i-u,j+v})=\sum_{k,l=-m}^mc_{k,l}c_{k-u,l+v},\,u,v\in\bbz\,,
\]
and as a trivial consequence of \eqref{inv.sym}, it follows that
\[
 R^{(m)}(u,v)=R^{(m)}(v,u)\,.
\]
Clearly, $R^{(m)}(u,v)$ is non-zero for only finitely many $u,v$, and hence Theorem \ref{gaussian.t1} applies, with a scaling because $R^{(m)}(0,0)$ need not be one. By that result, it follows that for fixed $m$, as $n\to\infty$, the ESD of $B_n^{(m)}/\sqrt n$ converges weakly in probability to the probability measure $\mu^{(m)}$ whose odd moments are all zero and for $l\ge1$, the $2l$-th moment is $\beta^{(m)}_{2l}$ defined by
\[
 \beta_{2l}^{(m)}:=\sum_{\sigma\in NC_2(2l)}\sum_{k\in S(\sigma)}\prod_{(u,v)\in\sigma}R^{(m)}(k_u,k_v),\,l\ge1\,,
\]
with $S(\sigma)$ being as in \eqref{eq.defS}. This, along with \eqref{inv.neweq1}, implies that as $n\to\infty$,
\[
  \frac1n\Tr\left(\left(zI_n-\frac{A_n^{(m)}}{\sqrt n}\right)^{-1}\right)\prob\int\frac1{z-x}\mu^{(m)}(dx),\,z\in\bbc\setminus\bbr\,.
\]
Recalling from \eqref{eq.deflevy} the definition of $L$, a restatement of the above is that
\begin{equation}\label{inv.neweq2}
L\left(\mu_n^{(m)},\mu^{(m)}\right)\prob0\,,
\end{equation}
as $n\to\infty$, where $\mu_n^{(m)}$ denotes the ESD of $A_n^{(m)}/\sqrt n$.
Notice that
\[
 \lim_{m\to\infty}R^{(m)}(u,v)=R(u,v),\,u,v\in\bbz\,.
\]
By using \eqref{inv.summable} to interchange limit and sum, it follows that
\[
 \lim_{m\to\infty}\beta_{2l}^{(m)}=\beta_{2l},\,l\ge1\,.
\]
Therefore,
\begin{equation}\label{inv.neweq3}
 \lim_{m\to\infty}L\left(\mu^{(m)},\mu\right)=0\,.
\end{equation}
In view of \eqref{inv.neweq2} and \eqref{inv.neweq3}, to complete the proof of the result, it suffices to show that
\begin{equation}\label{inv.neweq4}
 \lim_{m\to\infty}\limsup_{n\to\infty}E\left[L^3\left(\mu_n^{(m)},\mu_n\right)\right]=0\,,
\end{equation}
recalling that $\mu_n$ is the ESD of $A_n/\sqrt n$.

To that end, we shall use the fact that for $n\times n$ (deterministic) symmetric matrices $C$ and $D$ with ESD
$\nu_C$ and $\nu_D$ respectively,
\[
 L^3(\nu_C,\nu_D)\le\frac1n\Tr\left((C-D)^2\right)\,,
\]
which is a consequence of the Hoffman-Wielandt inequality; see Corollary A.41, page 502 in \cite{bai:silverstein:2010}.
Using this inequality, it is immediate that
\begin{eqnarray*}
E\left[ L^3\left(\mu_n^{(m)},\mu_n\right)\right]&\le&\frac1n E[\Tr[(A_n/\sqrt n-A_n^{(m)}/\sqrt n)^2]\\
&=&\sum_{k,l\in\bbz: |k|\vee |l|>m}c_{k,l}^2\,.
\end{eqnarray*}
The assumption \eqref{inv.summable} of course ensures that $\{c_{k,l}\}$ is square summable, and thus
 establishes \eqref{inv.neweq4}. Combining this with \eqref{inv.neweq2} and \eqref{inv.neweq3} completes the proof.
\end{proof}

\section{Stieltjes transform}\label{sec:stieltjes}
In this section a characterization of the Stieltjes transform of $\mu$, the LSD in Theorems \ref{gaussian.t1} and \ref{inv.main}, is given via a functional equation. As the reader may have
already noticed, in both the above results, $\mu$ is defined via the
correlations $R(u,v)$ which is as in \eqref{gaussian.defR} or
\eqref{inv.defR}. For this section, let $R(\cdot,\cdot)$ be the
correlations of a weakly stationary mean zero variance one process
$(Y_{ij}:i,j\in\bbz)$, that is,
\begin{eqnarray*}
E(Y_{i,j})&=&0,\,i,j\in\bbz\,,\nonumber\\
E(Y_{i,j}^2)&=&1,\,i,j\in\bbz\,,\nonumber\\
E(Y_{i,j}Y_{i-u,j+v})&=:&R(u,v),\,i,j,u,v\in\bbz\,.
\end{eqnarray*}
As in Section \ref{sec:gaussian}, we assume \eqref{eq.symmetry} and \eqref{eq.defrbar}. As before, let $\mu$ be the unique even probability measure whose  $2m$-th moment equals $\beta_{2m}$ which is as defined in \eqref{eq.defbeta}. Recall that the Stieltjes transform of the probability measure $\mu$ on $\bbr$ is denoted by,
$$\mathcal G(z)=\int_{\bbr} \frac1{z-x}\mu(dx), z\in \bbc.$$
The main result of this section is Theorem \ref{theorem-st} below.

Let the Fourier transform of covariance function $\{R(k,l)\}_{k,l\in\bbz}$ be given by
 \begin{equation*}
 f(x,y)= \sum_{k,l\in \bbz} R(k,l) \exp(2\pi i(kx+ly))\text{ for }
(x,y)\in [0,1]\times[0,1].
\end{equation*}
Note that by \eqref{eq.symmetry},  it follows that $f(x,y)$ is a real, symmetric function. For stating the main result, we need the following proposition.

\begin{prop}\label{lem:stieltjes}
 Suppose that ${\mathcal H}_1$ and ${\mathcal H}_2$ are functions from $\bbc\times[0,1]$ to $\bbc$ satisfying the following for $i=1,2$:
\begin{enumerate}
\item for all $x\in[0,1]$ and $z\in\bbc$,
\begin{equation}\label{eq1}
 z{\mathcal H}_i(z,x)=1+{\mathcal H}_i(z,x)\int_0^1{\mathcal H}_i(z,y)f(x,y)dy\,,
\end{equation}
\item there exists a neighborhood $N_i$ (independent of $x$) of infinity such that for all $x\in[0,1]$, ${\mathcal H}_i(\cdot,x)$ is analytic on $N_i$, 
\item for all $x\in[0,1]$,
\begin{equation}\label{eq2}
 \lim_{z\to\infty}z{\mathcal H}_i(z,x)=1\,,
\end{equation}
\item and
\begin{equation}\label{eq3}
 {\mathcal H}(-z,x)=-{\mathcal H}(z,x),\,z\in\bbc,\,x\in[0,1]\,.
\end{equation}
\end{enumerate}
Then
\[
 {\mathcal H}_1\equiv{\mathcal H}_2\mbox{ on }N_1\cap N_2\,.
\]
\end{prop}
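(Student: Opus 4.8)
The plan is to recast \eqref{eq1} as a fixed-point relation, deduce that the two solutions coincide for large $|z|$ through a contraction estimate on their difference, and then spread the equality over all of $N_1\cap N_2$ by analytic continuation. First I would note that \eqref{eq1} is equivalent to $\mathcal{H}_i(z,x)\bigl(z-D_i(z,x)\bigr)=1$, with $D_i(z,x):=\int_0^1\mathcal{H}_i(z,y)f(x,y)\,dy$; in particular $\mathcal{H}_i$ never vanishes and
\[
 \mathcal{H}_i(z,x)=\frac{1}{z-D_i(z,x)}\,.
\]
Subtracting the relations for $i=1,2$, and using both $\mathcal{H}_1\mathcal{H}_2=\bigl[(z-D_1)(z-D_2)\bigr]^{-1}$ and $D_1-D_2=\int_0^1(\mathcal{H}_1-\mathcal{H}_2)(z,y)f(x,y)\,dy$, produces the central identity
\[
 (\mathcal{H}_1-\mathcal{H}_2)(z,x)=\mathcal{H}_1(z,x)\,\mathcal{H}_2(z,x)\int_0^1(\mathcal{H}_1-\mathcal{H}_2)(z,y)f(x,y)\,dy\,.
\]
Because $\bar R<\infty$, the Fourier series defining $f$ converges uniformly, so $f$ is continuous with $|f(x,y)|\le\bar R$; hence $\int_0^1|f(x,y)|\,dy\le\bar R$ for every $x$, and $x\mapsto D_i(z,x)$ — and therefore $x\mapsto\mathcal{H}_i(z,x)$ — is continuous on the compact interval $[0,1]$.

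The decisive quantitative input is a uniform-in-$x$ smallness bound $m_i(z):=\sup_{x}|\mathcal{H}_i(z,x)|\le 2/|z|$ for $|z|$ large. From the fixed-point form, $|D_i(z,x)|\le\bar R\,m_i(z)$, so $|z|\,m_i(z)\le 1+\bar R\,m_i(z)^2$, that is,
\[
 \bar R\,m_i(z)^2-|z|\,m_i(z)+1\ge 0\,.
\]
For $|z|>2\sqrt{\bar R}$ this confines $m_i(z)$ to one of the two branches $m_i(z)\le t_-(z)$ or $m_i(z)\ge t_+(z)$, where $t_\pm(z)=\bigl(|z|\pm\sqrt{|z|^2-4\bar R}\bigr)/(2\bar R)$ satisfy $t_-(z)\le 2/|z|$ and $t_+(z)\ge|z|/(2\bar R)$. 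The upper branch corresponds to the denominator $z-D_i$ nearly vanishing and must be excluded using the normalization \eqref{eq2}: since $\mathcal{H}_i(z,x)\to 0$ pointwise as $z\to\infty$, the supremum (attained, by the continuity in $x$ noted above) cannot follow the branch $t_+(z)\to\infty$, so the small branch — and thus $m_i(z)\le 2/|z|$ — holds for all $|z|\ge R_0$, for some finite $R_0$. I expect this step to be the main obstacle, precisely because \eqref{eq2} is only pointwise in $x$ whereas we must control $\sup_x$; making the exclusion of the large branch rigorous requires combining the continuity of $\mathcal{H}_i(z,\cdot)$ on the compact interval with a continuity/connectedness argument in $z$ over the neighborhood of infinity.

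Granting the bound, taking $\sup_x$ in the central identity gives
\[
 \|(\mathcal{H}_1-\mathcal{H}_2)(z,\cdot)\|_\infty\le\frac{4\bar R}{|z|^2}\,\|(\mathcal{H}_1-\mathcal{H}_2)(z,\cdot)\|_\infty\,,
\]
and for $|z|>\max\bigl(R_0,2\sqrt{\bar R}\bigr)$ the prefactor is strictly less than $1$ while the norm is finite, forcing $\mathcal{H}_1(z,\cdot)\equiv\mathcal{H}_2(z,\cdot)$ on this outer region. Finally, for each fixed $x$ the functions $z\mapsto\mathcal{H}_i(z,x)$ are analytic on $N_1\cap N_2$ by hypothesis (2) and agree on the nonempty open set just found; taking $N_1\cap N_2$ to be a connected neighborhood of infinity, the identity theorem yields $\mathcal{H}_1(\cdot,x)\equiv\mathcal{H}_2(\cdot,x)$ throughout $N_1\cap N_2$. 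As $x\in[0,1]$ is arbitrary, $\mathcal{H}_1\equiv\mathcal{H}_2$ on $N_1\cap N_2$, as required.
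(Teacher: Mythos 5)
Your overall strategy --- rewrite \eqref{eq1} as $\mathcal H_i=(z-D_i)^{-1}$, derive the difference identity, and run a contraction for large $|z|$ followed by analytic continuation --- is a genuinely different route from the paper's, and the algebra you do carry out (the central identity, the quadratic inequality $\bar R\,m_i(z)^2-|z|\,m_i(z)+1\ge0$, the final contraction) is correct. But the step you yourself flag as the main obstacle is a genuine gap, not a technicality, and as written the proof does not close. The hypothesis \eqref{eq2} is only pointwise in $x$, so it does not rule out $m_i(z):=\sup_x|\mathcal H_i(z,x)|$ sitting on the large branch $m_i(z)\ge t_+(z)\to\infty$. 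Your proposed repair via connectedness in $z$ needs two inputs that are not available from the hypotheses as used: (i) continuity of $z\mapsto m_i(z)$ --- from pointwise analyticity in $z$ you only get that $m_i$ is a supremum of continuous functions, hence lower semicontinuous, which makes $\{m_i\le t_-\}$ closed but does not make $\{m_i\ge t_+\}$ closed, so the open/closed dichotomy on the connected region does not go through; and (ii) even granting continuity, you still need one base point $z_0$ with $m_i(z_0)\le t_-(z_0)$, and producing it again requires exactly the uniformity in $x$ that is missing (dominated convergence for $D_i(z,x)$ as $z\to\infty$ has no dominating function a priori). So the argument is circular at its quantitative core.

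For comparison, the paper's proof avoids all quantitative control on $\sup_x|\mathcal H_i(z,x)|$. Using analyticity on $N_i$, the oddness \eqref{eq3} and the normalization \eqref{eq2} (which makes the singularity at infinity removable), it expands $\mathcal H_i(z,x)=\sum_{k\ge0}H_{2k}(x)z^{-(2k+1)}$ with $H_0\equiv1$, and then observes that substituting this expansion into \eqref{eq1} and matching powers of $z$ forces the recursion
\begin{equation*}
 H_{2m}(x)=\sum_{k=1}^mH_{2(m-k)}(x)\int_0^1f(x,y)H_{2(k-1)}(y)\,dy\,,
\end{equation*}
which determines every coefficient from $H_0$ alone; hence the two expansions coincide and $\mathcal H_1\equiv\mathcal H_2$ on $N_1\cap N_2$. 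If you want to salvage your write-up, the cleanest fix is to replace the branch-exclusion step by exactly this coefficient-matching argument: the functional equation plus \eqref{eq2} determines the Laurent data recursively, and no bound of the form $m_i(z)\le 2/|z|$ is ever needed.
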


\begin{proof}
By the assumption of analyticity on $N_1$ and \eqref{eq3},  for all $x\in[0,1]$ and $k\ge1$, there exist $H_{2k}(x)\in\bbc$ such that
\[
 {\mathcal H}_1(z,x)=\sum_{k=0}^\infty H_{2k}(x)z^{-(2k+1)},\,z\in N_1,\,x\in[0,1]\,.
\]
The condition \eqref{eq2} implies that
\begin{equation}\label{eq4}
 H_0(x)=1\,.
\end{equation}
By comparing the power series expansion of the LHS and the RHS of \eqref{eq1}, one will arrive at the recursion
\begin{equation}\label{eq5}
 H_{2m}(x)=\sum_{k=1}^mH_{2(m-k)}(x)\int_0^1f(x,y)H_{2(k-1)}(y)dy\,.
\end{equation}
Clearly, a power series expansion of ${\mathcal H}_2$ will also satisfy \eqref{eq4} and \eqref{eq5}, and therefore they have to match with that of ${\mathcal H}_1$. This completes the proof.
\end{proof}

The following is the main result.
\begin{theorem}\label{theorem-st}
 There exists a function ${\mathcal H}$ satisfying the assumptions of the Proposition~\ref{lem:stieltjes}. The Stieltjes transform $\mathcal G$ of the LSD $\mu$ is given by
\[
 {\mathcal G}(z):=\left[\int_0^1{\mathcal H}(z,x)dx\right],\,z\in\bbc\,.
\]
\end{theorem}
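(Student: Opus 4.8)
The plan is to establish two things: the existence of a function $\mathcal H$ satisfying the four conditions of Proposition~\ref{lem:stieltjes}, and the identity $\mathcal G(z)=\int_0^1\mathcal H(z,x)\,dx$. The uniqueness is already handled by the proposition, so existence is the real content. I would approach this constructively by defining the candidate coefficients $H_{2k}(x)$ directly through the recursion \eqref{eq5} with the initialization $H_0(x)\equiv1$ from \eqref{eq4}, and then showing that the resulting power series $\mathcal H(z,x):=\sum_{k\ge0}H_{2k}(x)z^{-(2k+1)}$ converges on a neighborhood of infinity and satisfies the functional equation \eqref{eq1}. The odd-symmetry \eqref{eq3} and the normalization \eqref{eq2} are then immediate from the form of the series.

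First I would set up the combinatorial interpretation. The whole point of introducing the Kreweras complement and the sets $S(\sigma)$ is that $\beta_{2m}$ can be rewritten as a sum over non-crossing pair partitions whose weights factor through the Fourier transform $f$. The key step is to recognize that integrating products of the form $\int_0^1\cdots\int_0^1 \prod f(x_i,x_j)\,dx$ over the block structure of $\sigma\cup K(\sigma)$ reproduces exactly the constraint $\sum k_{v^s_j}=0$ appearing in \eqref{eq.defS}: indeed $\int_0^1 e^{2\pi i k x}\,dx=\one(k=0)$, so integrating a product of Fourier kernels over a shared variable enforces that the associated frequencies sum to zero. This is the bridge that lets me identify the analytically-defined $H_{2k}$ with the moment-type quantities $\beta_{2k}$ and their ``partial'' versions indexed by a distinguished variable $x$. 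Concretely I would show by induction on $m$, using the recursive decomposition of a non-crossing pair partition at its outermost block, that $H_{2m}(x)$ equals the sum over $\sigma\in NC_2(2m)$ of the corresponding weighted products with one free external variable $x$, and that $\int_0^1 H_{2m}(x)\,dx=\beta_{2m}$.

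The recursion \eqref{eq5} itself mirrors the standard non-crossing recursion: the outermost block of a non-crossing pair partition of $\{1,\ldots,2m\}$ splits the remaining points into an ``inside'' part of size $2(k-1)$ and an ``outside'' part of size $2(m-k)$, and the kernel $f(x,y)$ records the pairing of the two legs of that block through a shared integration variable. I would verify that summing $H_{2(m-k)}(x)\int_0^1 f(x,y)H_{2(k-1)}(y)\,dy$ over $k$ reassembles all of $NC_2(2m)$, which is exactly the content of \eqref{eq5}. For convergence, I would use the bound $\bar R$ from Assumption 2: since $\sup_{x,y}|f(x,y)|\le \bar R$ and $\#NC_2(2m)$ is the Catalan number, one gets $|H_{2m}(x)|\le C_m\bar R^{m}$ with $C_m$ the $m$-th Catalan number, giving radius of convergence at least $1/(2\sqrt{\bar R})$ and thus a genuine neighborhood $N$ of infinity on which $\mathcal H(\cdot,x)$ is analytic uniformly in $x$, establishing conditions (2) and (3). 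Finally, since $\int_0^1 H_{2m}(x)\,dx=\beta_{2m}$ are the moments of $\mu$, the power series of $\int_0^1\mathcal H(z,x)\,dx$ is precisely the moment generating expansion $\sum_m \beta_{2m}z^{-(2m+1)}$ of the Stieltjes transform $\mathcal G(z)$, which converges on the same neighborhood because $\mu$ is compactly supported.

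The main obstacle I anticipate is the inductive identification of $H_{2m}(x)$ with the correct partial sum over $NC_2(2m)$, i.e.\ getting the bookkeeping of the Kreweras complement and the frequency-balancing constraint to match the integral of Fourier kernels precisely. One must track carefully how a distinguished ``base point'' variable $x$ propagates through the outermost block and how the constraint $\sum_j k_{v^s_j}=0$ on each Kreweras block emerges from integrating $\prod e^{2\pi i(k x+l y)}$ over the blocks that the external legs belong to. A second technical point is verifying that the constructed $\mathcal H$ indeed satisfies \eqref{eq1} and not merely the coefficient recursion \eqref{eq5}; this is really a matter of reorganizing the recursion into the fixed-point form $z\mathcal H=1+\mathcal H\int f\mathcal H$, which follows by multiplying through by $z$ and matching powers, so it is mechanical once the recursion is in hand. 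With existence established, Proposition~\ref{lem:stieltjes} guarantees $\mathcal H$ is the unique such function, and the moment-matching computation identifies its integral with $\mathcal G$, completing the proof.
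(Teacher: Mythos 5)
Your proposal is correct and follows essentially the same route as the paper: the Fourier orthogonality $\int_0^1 e^{2\pi i kx}\,dx=\one(k=0)$ converting the Kreweras-block constraints of $S(\sigma)$ into integrals of products of $f$ (the paper's Lemma~\ref{lemma:alternate}), the outermost-block decomposition of non-crossing pair partitions yielding the coefficient recursion (the paper's Lemmas~\ref{stieltjes.l1} and~\ref{stieltjes.l2} via the Catalan-word factorization $w=aw_1aw_2$), and the bound $|f|\le\bar R$ times a Catalan-number count to get analyticity on $\{|z|>2\bar R^{1/2}\}$. The only cosmetic difference is the direction of the identification — you define $H_{2m}$ by the recursion and then match it to the combinatorial sum, while the paper defines $H_{2m}(x)=\sum_\sigma h_\sigma(x)$ and then derives the recursion — but the content is identical.
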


Before proceeding to the proof, we introduce some notations which will be used for the same. Fix $\sigma\in NC_2(2m)$, and denote its Kreweras complement by $(V_1,\ldots,V_{m+1})$. Although the Kreweras complement is a partition of $\{\overline1,\ldots,\overline{2m}\}$, for the ease of notation, $V_1,\ldots,V_{m+1}$ will be thought of as subsets of $\{1,\ldots,2m\}$, that is, the overline will be suppressed. In order to ensure uniqueness in the notation, we impose the requirement that the blocks $V_1,\ldots,V_{m+1}$ are ordered in the following way. If $1\le i<j\le m+1$, then the {\bf maximal} element of $V_i$ is strictly less than that of $V_j$. Let $\mathcal T_\sigma$ be the unique  function from $\{1,\ldots,2m\}$ to $\{1,\ldots,m+1\}$ satisfying
\[
 i\in V_{\mathcal T_\sigma(i)},\,1\le i\le2m\,.
\]
For example, if
\[
 \sigma:=\{(1,4),(2,3),(5,6)\}\,,
\]
then $\mathcal T_\sigma(1) = 2, \mathcal T_\sigma(2) = 1, \mathcal
T_\sigma(3) = 2, \mathcal T_\sigma(4) = 4, \mathcal T_\sigma(5) = 3,
\mathcal T_\sigma(6) = 4$. 
Define the function $L_\sigma$ from $\bbr^{m+1}$ to $\bbr$ by
\[
 L_\sigma(x):=\prod_{(u,v)\in\sigma}f\left(x_{\mathcal T_\sigma(u)},x_{\mathcal T_\sigma(v)}\right),\,x\in\bbr^{m+1}\,.
\]
Observe that
\begin{equation}\label{stieltjes.eq2}
 \mathcal T_\sigma(2m)=m+1\,.
\end{equation}
Finally, set
\[
 h_\sigma(y):=\int_0^1\ldots\int_0^1L_\sigma(x_1,\ldots,x_m,y)dx_m\ldots dx_1,\,y\in\bbr\,.
\]

The following lemma shows how the moments of the LSD $\mu$ are related to this function. 
\begin{lemma}\label{lemma:alternate}
Let $\beta_{2m}$ be as in \eqref{eq.defbeta}. Then
\begin{equation*}
\beta_{2m}=\sum_{\sigma\in NC_2(2m)}\int_0^1h_\sigma(y)dy,\,m\ge1\,.
\end{equation*}
\end{lemma}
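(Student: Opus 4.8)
The plan is to prove the stronger per-partition identity
\[
\sum_{k\in S(\sigma)}\prod_{(u,v)\in\sigma}R(k_u,k_v)=\int_0^1 h_\sigma(y)\,dy,\qquad \sigma\in NC_2(2m),
\]
and then sum over $\sigma\in NC_2(2m)$ to recover $\beta_{2m}$ via its definition in \eqref{eq.defbeta}. Unfolding the definitions of $h_\sigma$ and $L_\sigma$, the right-hand side is the full $(m+1)$-fold integral
\[
\int_{[0,1]^{m+1}}\prod_{(u,v)\in\sigma}f\!\left(x_{\mathcal T_\sigma(u)},x_{\mathcal T_\sigma(v)}\right)\,dx_1\cdots dx_{m+1},
\]
so everything reduces to evaluating this integral and matching it against the constrained sum over $S(\sigma)$.

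First I would expand each factor $f$ by its defining Fourier series. Because $\bar R<\infty$ by Assumption 2, the series $f(x,y)=\sum_{k,l\in\bbz}R(k,l)e^{2\pi i(kx+ly)}$ converges absolutely and uniformly, so $f$ is continuous and bounded, and the product of finitely many such factors may be expanded term by term. Attaching to the pair $(u,v)\in\sigma$ the frequency variables $k_u,k_v$ (each index $1,\ldots,2m$ lies in exactly one pair of $\sigma$), the product becomes
\[
\prod_{(u,v)\in\sigma}f\!\left(x_{\mathcal T_\sigma(u)},x_{\mathcal T_\sigma(v)}\right)=\sum_{k\in\bbz^{2m}}\Bigl[\prod_{(u,v)\in\sigma}R(k_u,k_v)\Bigr]\exp\!\Bigl(2\pi i\sum_{j=1}^{2m}k_j\,x_{\mathcal T_\sigma(j)}\Bigr),
\]
using that $\sum_{(u,v)\in\sigma}\bigl(k_u x_{\mathcal T_\sigma(u)}+k_v x_{\mathcal T_\sigma(v)}\bigr)=\sum_{j=1}^{2m}k_j x_{\mathcal T_\sigma(j)}$ since the pairs partition $\{1,\ldots,2m\}$.

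Next I would interchange the absolutely convergent sum with the integral. This is justified by $\sum_{k\in\bbz^{2m}}\prod_{(u,v)\in\sigma}|R(k_u,k_v)|=\bar R^{\,m}<\infty$, the sum factorizing over the $m$ pairs since distinct pairs involve disjoint indices. Grouping positions by the Kreweras block to which they belong gives $\sum_{j=1}^{2m}k_j x_{\mathcal T_\sigma(j)}=\sum_{s=1}^{m+1}x_s\sum_{j\in V_s}k_j$, so by Fubini the integral factorizes into one-dimensional factors $\int_0^1 e^{2\pi i x_s(\sum_{j\in V_s}k_j)}\,dx_s$, each equal to $1$ when $\sum_{j\in V_s}k_j=0$ and to $0$ otherwise. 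Hence the integral annihilates every term except those $k$ with $\sum_{j\in V_s}k_j=0$ for all $s=1,\ldots,m+1$, which is precisely the defining condition of $S(\sigma)$ in \eqref{eq.defS}. The surviving terms sum to $\sum_{k\in S(\sigma)}\prod_{(u,v)\in\sigma}R(k_u,k_v)$, establishing the per-partition identity; summing over $\sigma\in NC_2(2m)$ then yields the lemma.

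The one step requiring care is the bookkeeping in the exponent: confirming that pairing the frequency variables with positions and regrouping by Kreweras blocks turns the orthogonality relations $\int_0^1 e^{2\pi i n x}dx=\one\{n=0\}$ into exactly the linear constraints $\sum_{j\in V_s}k_j=0$ cutting out $S(\sigma)$. The analytic content (uniform absolute convergence of the Fourier series and the Fubini interchange) is immediate from Assumption 2 and the factorization just noted, so it is really the combinatorial identification of the block sums with the constraints of $S(\sigma)$ that carries the proof.
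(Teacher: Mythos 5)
Your proposal is correct and follows essentially the same route as the paper: reduce to the per-partition identity $\sum_{k\in S(\sigma)}\prod_{(u,v)\in\sigma}R(k_u,k_v)=\int_{[0,1]^{m+1}}L_\sigma(x)\,dx$, expand each $f$ by its Fourier series, regroup the exponent by Kreweras blocks, and interchange sum and integral using the absolute summability from Assumption 2 so that orthogonality of the exponentials cuts out exactly the constraints defining $S(\sigma)$. No gaps.
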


\begin{proof}
 Fix $m\ge1$ and $\sigma\in NC_2(2m)$. 
All that needs to be shown is
\begin{equation}\label{eq.alternate}
\sum_{k\in S(\sigma)}\prod_{(u,v)\in\sigma}R(k_u,k_v)= \int_{[0,1]^{m+1}}L_\sigma(x)\,dx\,,
\end{equation}
where $S(\sigma)$ is as in \eqref{eq.defS}.
Observe that
\[
 \sum_{(u,v)\in\sigma}\left[k_ux_{\mathcal T_\sigma(u)}+k_vx_{\mathcal T_\sigma(v)}\right]=\sum_{u=1}^{2m}k_ux_{\mathcal T_\sigma(u)}=\sum_{l=1}^{m+1}x_l\sum_{j\in V_l}k_j\,,
\]
and hence
\begin{eqnarray*}
&& \int_{[0,1]^{m+1}}L_\sigma(x)\,dx\\
&=&\int_{[0,1]^{m+1}}\left[\sum_{k\in\bbz^{2m}}\exp\left(2\pi i\sum_{l=1}^{m+1}x_l\sum_{j\in V_l}k_j\right)\prod_{(u,v)\in\sigma}R(k_u,k_v)\right]dx\\
&=&\sum_{k\in\bbz^{2m}}\left(\prod_{(u,v)\in\sigma}R(k_u,k_v)\right)\int_{[0,1]^{m+1}}\exp\left(2\pi i\sum_{l=1}^{m+1}x_l\sum_{j\in V_l}k_j\right)dx
\end{eqnarray*}
\begin{eqnarray*}
&=&\sum_{k\in S(\sigma)}\prod_{(u,v)\in\sigma}R(k_u,k_v)\,,
\end{eqnarray*}
the interchange of sum and integral in the second last line being justified by assumption~\eqref{eq.defrbar}. This completes the proof.
\end{proof}

The next two lemmas give a recursive relation for the function $h_\sigma$.

\begin{lemma}\label{stieltjes.l1}
 Assume that $\sigma\in NC_2(2m)$ can be written as 
\begin{equation}\label{stieltjes.l1.eq1}
 \sigma=\sigma_1\cup\sigma_2\,,
\end{equation}
where $\sigma_1\in NC_2(2k)$ for some $1\le k\le m-1$, and $\sigma_2$ is a non-crossing pair partition of $\{2k+1,\ldots,2m\}$. Viewing $\sigma_2$ as an element of $NC_2(2m-2k)$ by the obvious relabeling of $2k+1,\ldots,2m$ to $1,\ldots,2m-2k$ respectively, it is true that
\[
 h_\sigma(y)=h_{\sigma_1}(y)h_{\sigma_2}(y),\,y\in\bbr\,.
\]
\end{lemma}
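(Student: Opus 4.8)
The plan is to reduce the claimed multiplicativity of $h_\sigma$ to a factorization of the integrand $L_\sigma$ itself, which in turn rests on understanding how the Kreweras complement behaves under the ``concatenation'' $\sigma=\sigma_1\cup\sigma_2$. The central combinatorial fact I would establish first is the following description of $K(\sigma)$. Since no pair of $\sigma$ joins the interval $\{1,\ldots,2k\}$ to the interval $\{2k+1,\ldots,2m\}$, the diagram of $\sigma$ sits inside the circle as two disjoint non-crossing clusters. Writing $W_{k+1}$ for the block of $K(\sigma_1)$ containing $\overline{2k}$ and $U'$ for the block of $K(\sigma_2)$ containing $\overline{2m}$, I claim that
\[
 K(\sigma)=\bigl(K(\sigma_1)\setminus\{W_{k+1}\}\bigr)\cup\bigl(K(\sigma_2)\setminus\{U'\}\bigr)\cup\bigl\{W_{k+1}\cup U'\bigr\}\,,
\]
where $K(\sigma_2)$ is taken with the shift $j\mapsto j+2k$. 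In words, the Kreweras complement splits as the (shifted) complements of the two pieces, except that their two ``top'' blocks --- the ones containing $\overline{2k}$ and $\overline{2m}$ --- merge into a single block. I would prove this by verifying that the right-hand partition $\tau_0$ makes $\sigma\cup\tau_0$ non-crossing and is maximal with this property, and then invoke the uniqueness of the Kreweras complement; the only arcs of $\tau_0$ beyond those of $K(\sigma_1)$ and $K(\sigma_2)$ are those joining $\overline{2k}$ to $\overline{2m}$, and these can be drawn in the outer face without crossing any arc of $\sigma$, which is precisely where planarity is used.

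Granting this, I would next translate it into a statement about the block-assignment maps. Since $\overline{2m}$ carries the largest label, the merged block $W_{k+1}\cup U'$ is $V_{m+1}$ under the max-element ordering, consistent with \eqref{stieltjes.eq2}. The blocks $W_1,\ldots,W_k$ of $K(\sigma_1)$ all have maximal element $<2k$, while the shifted non-top blocks of $K(\sigma_2)$ have maximal element in $\{2k+1,\ldots,2m-1\}$, so the ordering separates them cleanly: $W_1,\ldots,W_k$ become $V_1,\ldots,V_k$ and the shifted blocks become $V_{k+1},\ldots,V_m$, each group in its inherited order. Consequently, for $i\in\{1,\ldots,2k\}$ one has $\mathcal T_\sigma(i)=\mathcal T_{\sigma_1}(i)$ when $\mathcal T_{\sigma_1}(i)\le k$ and $\mathcal T_\sigma(i)=m+1$ when $\mathcal T_{\sigma_1}(i)=k+1$; and for $i\in\{2k+1,\ldots,2m\}$, writing $i'=i-2k$, one has $\mathcal T_\sigma(i)=k+\mathcal T_{\sigma_2}(i')$ when $\mathcal T_{\sigma_2}(i')\le m-k$ and $\mathcal T_\sigma(i)=m+1$ when $\mathcal T_{\sigma_2}(i')=m-k+1$.

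Feeding these identities into the definition of $L_\sigma$ and splitting the product over $\sigma=\sigma_1\cup\sigma_2$ then yields the factorization
\[
 L_\sigma(x_1,\ldots,x_{m+1})=L_{\sigma_1}(x_1,\ldots,x_k,x_{m+1})\,L_{\sigma_2}(x_{k+1},\ldots,x_m,x_{m+1})\,,
\]
the point being that in each factor the merged-block index $m+1$ is exactly the last argument of the corresponding $L_{\sigma_i}$ (the slot coming from its own top block). Setting $x_{m+1}=y$ and integrating $x_1,\ldots,x_m$ over $[0,1]^m$, the first $k$ variables appear only in the first factor and the next $m-k$ only in the second, so Fubini separates the integral into $h_{\sigma_1}(y)\,h_{\sigma_2}(y)$, which is the assertion.

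The routine parts are the bookkeeping of the ordering convention in the second paragraph and the Fubini step; the real content, and the step I expect to be the main obstacle, is the structural description of $K(\sigma)$ as the merge of the two top blocks. Establishing the maximality half of that claim cleanly --- ruling out any further coarsening without producing a crossing with $\sigma$ --- is where care is needed, although small examples such as $\sigma=\{(1,2),(3,4)\}$ and $\sigma=\{(1,4),(2,3),(5,6)\}$ already exhibit the merging phenomenon and can guide the general planarity argument.
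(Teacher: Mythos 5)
Your proof is correct and follows essentially the same route as the paper's: factor $L_\sigma$ over the pairs of $\sigma_1$ and $\sigma_2$, observe that the two factors involve disjoint groups of variables apart from the shared slot $x_{m+1}$, and separate the integral by Fubini. The only difference is that you make explicit the merge structure of $K(\sigma)$ (the blocks containing $\overline{2k}$ and $\overline{2m}$ fusing into $V_{m+1}$), which the paper compresses into the ``easy to see'' assertions \eqref{stieltjes.l1.eq2} and \eqref{stieltjes.l1.eq3}; your verification of that structure is sound.
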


\begin{proof}It is easy to see from \eqref{stieltjes.eq2} and \eqref{stieltjes.l1.eq1} that
\begin{equation}\label{stieltjes.l1.eq2}
 \mathcal T_\sigma(j)\in\{1,\ldots,k,m+1\},\mbox{ for }1\le j\le2k\,,
\end{equation}
and
\begin{equation}\label{stieltjes.l1.eq3}
 \mathcal T_\sigma(j)\in\{k+1,\ldots,m+1\},\mbox{ for }2k+1\le j\le2m\,.
\end{equation}
Write
\[
 L_\sigma(x)=
\]
\[
\left(\prod_{(u,v)\in\sigma:u,v\le2k}f\left(x_{\mathcal T_\sigma(u)},x_{\mathcal T_\sigma(v)}\right)\right) \left(\prod_{(u,v)\in\sigma:u,v>2k}f\left(x_{\mathcal T_\sigma(u)},x_{\mathcal T_\sigma(v)}\right)\right)\,.
\]
By \eqref{stieltjes.l1.eq2} and \eqref{stieltjes.l1.eq3}, it follows that
\begin{eqnarray*}
&& h_\sigma(x_{m+1})\\
&=&\left(\int_0^1\ldots\int_0^1\prod_{(u,v)\in\sigma:u,v\le2k}f\left(x_{\mathcal T_\sigma(u)},x_{\mathcal T_\sigma(v)}\right)dx_k\ldots dx_1\right)\\
&&\left(\int_0^1\ldots\int_0^1\prod_{(u,v)\in\sigma:u,v>2k}f\left(x_{\mathcal T_\sigma(u)},x_{\mathcal T_\sigma(v)}\right)dx_m\ldots dx_{k+1}\right)
\end{eqnarray*}
\begin{eqnarray*}
&=&h_{\sigma_1}(x_{m+1})h_{\sigma_2}(x_{m+1})\,.
\end{eqnarray*}
This completes the proof.
\end{proof}

\begin{lemma}\label{stieltjes.l2}
 If
\begin{equation*}
 \sigma=\{(1,2m)\}\cup\sigma_1\,,
\end{equation*}
for some non-crossing pair partition $\sigma_1$ of $\{2,\ldots,2m-1\}$, then
\[
 h_\sigma(z)=\int_0^1h_{\sigma_1}(y)f(y,z)dy,\,z\in\bbr\,,
\]
where, once again, $\sigma_1$ is viewed as an element of $NC_2(2m-2)$.
\end{lemma}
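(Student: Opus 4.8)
The plan is to reduce the claim to a pointwise factorization of the integrand defining $h_\sigma$ and then integrate. Let $\tilde\sigma_1\in NC_2(2m-2)$ denote the relabelling of $\sigma_1$ obtained by sending $2,\ldots,2m-1$ to $1,\ldots,2m-2$, so that $h_{\tilde\sigma_1}$ is exactly the $h_{\sigma_1}$ appearing in the statement. The target identity is
\[
 L_\sigma(x_1,\ldots,x_{m+1})=f(x_m,x_{m+1})\,L_{\tilde\sigma_1}(x_1,\ldots,x_m),\quad x\in\bbr^{m+1}\,.
\]
Granting this, I would substitute it into the definition of $h_\sigma(z)$, take the factor $f(x_m,z)$ (which is independent of $x_1,\ldots,x_{m-1}$) outside the inner integrations, recognise the remaining integral over $x_1,\ldots,x_{m-1}$ as $h_{\tilde\sigma_1}(x_m)$, and rename the dummy variable $x_m$ to $y$. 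This gives $h_\sigma(z)=\int_0^1 h_{\tilde\sigma_1}(y)f(y,z)\,dy$, which is the assertion. This final step is a routine use of Fubini.

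The content of the argument is the factorization, which I would derive from three facts about the ordered blocks $V_1,\ldots,V_{m+1}$ of the Kreweras complement of $\sigma$: (i) $\mathcal T_\sigma(2m)=m+1$, which is \eqref{stieltjes.eq2}; (ii) $\mathcal T_\sigma(1)=m$; and (iii) $\mathcal T_\sigma(j)=\mathcal T_{\tilde\sigma_1}(j-1)$ for $2\le j\le 2m-1$. Since $\mathcal T_{\tilde\sigma_1}$ takes values in $\{1,\ldots,m\}$, facts (ii) and (iii) ensure that the only index mapped to $m+1$ is $2m$. Hence in $L_\sigma$ the pair $(1,2m)$ contributes precisely $f(x_{\mathcal T_\sigma(1)},x_{\mathcal T_\sigma(2m)})=f(x_m,x_{m+1})$, while the pairs of $\sigma_1$, reindexed through $\tilde\sigma_1$, contribute $\prod_{(a,b)\in\tilde\sigma_1}f(x_{\mathcal T_{\tilde\sigma_1}(a)},x_{\mathcal T_{\tilde\sigma_1}(b)})=L_{\tilde\sigma_1}(x_1,\ldots,x_m)$, with no argument equal to $x_{m+1}$; the two factors therefore separate as claimed.

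To establish (i)--(iii) I would argue in the cyclic picture $1,\overline1,\ldots,2m,\overline{2m}$. The arc $(1,2m)$ encloses every bar except $\overline{2m}$, so $\{\overline{2m}\}$ is a block of $K(\sigma)$ with the largest maximal element, which is (i). The bars $\overline1$ and $\overline{2m-1}$ both lie in the single outermost region just inside the arc $(1,2m)$, hence in a common block of $K(\sigma)$; as this block contains the largest inner index $2m-1$ it must be $V_m$, giving (ii). For (iii) I would check that adjoining the outer arc affects the inner bars $\overline2,\ldots,\overline{2m-1}$ only through the order preserving translation $\overline t\mapsto\overline{t+1}$ of $K(\tilde\sigma_1)$: the blocks $W_1,\ldots,W_{m-1}$ of $K(\tilde\sigma_1)$ become $V_1,\ldots,V_{m-1}$, while the topmost block $W_m$, which contains $\overline{2m-2}$ by \eqref{stieltjes.eq2} applied to $\tilde\sigma_1$, is joined by the boundary bar $\overline1$ to form $V_m$. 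Because translation preserves the ordering by maximal element and the new singleton $\{\overline{2m}\}$ sits strictly above every inner block, the indices match as stated.

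The main obstacle is exactly this bookkeeping of the Kreweras complement: one must confirm that the outer arc leaves the relative order of the inner blocks untouched and that precisely the topmost inner block absorbs $\overline1$, so that the index $m$ is assigned consistently to $1$, to $2m-1$, and to the remaining bars of $W_m$. Once this structural description of $K(\sigma)$ is secured, the factorization and the concluding integration follow without further difficulty.
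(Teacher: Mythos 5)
Your proof is correct and follows essentially the same route as the paper: both establish $\mathcal T_\sigma(1)=m$, $\mathcal T_\sigma(2m)=m+1$, and $\mathcal T_\sigma(j)=\mathcal T_{\sigma_1}(j-1)$ for $2\le j\le 2m-1$, deduce the factorization $L_\sigma(x)=f(x_m,x_{m+1})L_{\sigma_1}(x_1,\ldots,x_m)$, and integrate. The only difference is that the paper asserts the three index identities as ``clear'' while you supply the (correct) justification via the structure of the Kreweras complement.
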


\begin{proof}
 Throughout the proof, $\sigma_1$ is to be thought of as an element of \\$NC_2(2m-2)$. Clearly,
\begin{eqnarray*}
\mathcal T_\sigma(1)&=&m\,,\\
\mathcal T_\sigma(2m)&=&m+1\,,\\
\mathcal T_\sigma(j)&=&\mathcal T_{\sigma_1}(j-1),\,2\le j\le2m-1\,.
\end{eqnarray*}
The above equations imply that
\[
 L_\sigma(x)=f(x_m,x_{m+1})L_{\sigma_1}(x_1,\ldots,x_m),\,x\in\bbr^{m+1}\,.
\]
Thus,
\begin{eqnarray*}
 h_\sigma(z)&=&\int_0^1\ldots\int_0^1f(x_m,z)L_{\sigma_1}(x_1,\ldots,x_m)dx_m\ldots dx_1\\
&=&\int_0^1f(x_m,z)\left\{\int_0^1\ldots\int_0^1L_{\sigma_1}(x_1,\ldots,x_m)dx_{m-1}\ldots dx_1\right\}dx_m\\
&=&\int_0^1f(x_m,z)h_{\sigma_1}(x_m)dx_m\,,
\end{eqnarray*}
which completes the proof.
\end{proof}

With the aid of the above lemmas, we proceed to the proof of Theorem~\ref{theorem-st}.

\begin{proof}[Proof of Theorem~\ref{theorem-st}]
We start with defining the functions $$H_0(x)=1, \qquad H_{2m}(x)=\sum_{\sigma\in NC_2(2m)}h_{\sigma}(x).$$  Lemma~\ref{lemma:alternate} implies that
\begin{equation}\label{stieltjes.eq3}
\beta_{2m}=\int_0^1 H_{2m}(x)dx\,. 
\end{equation}
By \eqref{eq.defrbar}, it follows that
$|f(x,y)|\le \overline{R}$ uniformly for all $x$ and $y$ in
  $[0,1]$. 
Lemmas \ref{stieltjes.l1} and \ref{stieltjes.l2} applied inductively imply that
\[
 \sup_{0\le x\le1}|h_\sigma(x)|\le{\bar R}^m,\,\sigma\in NC_2(2m),\,m\ge1\,.
\]
 Since $\#NC_2(2m)\le 4^m$, it holds that
  $$\sup_{0\le x\le1}|H_{2m}(x)|\le {\bar R}^m2^{2m},\,m\ge1\,.$$
Consequently,  
\begin{equation}\label{stieltjes.eq1}
    \sup_{0\le x\le 1}\limsup_{m\to\infty}|H_{2m}^{1/2m}(x)|\le 2{\bar R}^{1/2}<\infty\,.
  \end{equation}
Therefore, the power series
$$  \mathcal H(z,x)=\sum_{m=0}^\infty
  \frac{H_{2m}(x)}{z^{2m+1}}$$
converges on $\{z\in\bbc:|z|>2{\bar R}^{1/2}\}$ for every fixed $x\in[0,1]$. 
Note that this neighborhood around infinity
  is independent of $x\in[0,1]$. It is easy to see that $z\mathcal H(z,x)$ has a power series expansion with
  the leading term as $1$ and hence $z\mathcal{H}(z,x)\to 1$ as
  $|z|\to\infty$.  It follows from the definition of
  $\mathcal{H}(z,x)$ that $\mathcal H(-z,x)=-\mathcal H(z,x)$.
Recall that the Stieltjes transform $\mathcal G$ of $\mu$ satisfies $$ \mathcal G(z)=\sum_{m=0}^{\infty}\beta_{2m}z^{-(2m+1)}\,,$$
(with the obvious convention that $\beta_0:=1$) which yields,
$$\mathcal G(z)=\int_0^1 \mathcal H(z,x) dx.$$
Equation \eqref{eq1} with ${\mathcal H}_i$ replaced by $\mathcal H$ is all that remains to be checked. 

To that end, we derive a recursion for $\mathcal H(z,x)$ using the
  properties of $h_\sigma(x)$.
Recall that there is a natural one-one correspondence between\\ $NC_2(2m)$ and the set of Catalan words of length $2m$ with the understanding that two words will be considered identical if one can be obtained from the other by a relabeling of letters. Keeping this correspondence in mind, by an abuse of notation, we shall now consider $h_w(x)$ for Catalan words $w$, and denote by $NC_2(2m)$ the set of Catalan words of length $2m$.
 Note that any Catalan word $w$
  of length $2m $ can be written as $w=aw_1aw_2$, for some $w_1\in NC_2(2k-2)$
  and $w_2\in NC_2(2m-2k)$.  So if
  $$H_{2m,k}(x):=\sum_{w_1\in NC_2(2k-2)}\sum_{w_2\in NC_2(2m-2k)}h_{aw_1aw_2}(x)\,,$$ then
  $$H_{2m}(x)=\sum_{k=1}^mH_{2m,k}(x).$$
Notice that
  \begin{align*}
    H_{2m,k}(x)
    &=\sum_{w_1\in NC_2(2k-2)}h_{aw_1a}(x)\sum_{w_2\in NC_2(2m-2k)}h_{w_2}(x)
  \end{align*}  
\begin{eqnarray*}
&=&\sum_{w_1\in NC_2(2k-2)}\int_0^1\left[f(x,y)h_{w_1}(y)\right]dy\sum_{w_2\in NC_2(2m-2k)}h_{w_2}(x)\\
&=&\int_0^1\left[f(x,y)H_{2(k-1)}(y) H_{2(m-k)}(x)\right]dy\,,
\end{eqnarray*}
the equalities in the first two lines following from Lemmas \ref{stieltjes.l1} and \ref{stieltjes.l2} respectively.
  As a consequence,
  $$H_{2m}(x)=\sum_{k=1}^mH_{2(m-k)}(x) \int_0^1f(x,y)H_{2(k-1)}(y)dy .$$

  Now by an easy computation it follows that,
$$\mathcal H(z,x)=\sum_{m=0}^{\infty}H_{2m}z^{-(2m+1)}=\frac1z+\frac1z\mathcal{H}(z,x)\int_0^1f(x,y)\mathcal{H}(z,y)dy.$$
Hence,
  \begin{equation*}\label{eq:h:form1}
    z\mathcal H(z,x)=1+\mathcal H(z,x)\int_0^1f(x,y)\mathcal H(z,y)dy.
  \end{equation*}
This completes the proof.
\end{proof}

\begin{remark}\label{rem:bounded}
 Equations \eqref{stieltjes.eq3} and \eqref{stieltjes.eq1} imply that
\[
 \lim_{m\to\infty}\beta_{2m}^{1/2m}<\infty\,,
\]
which implies that the probability measure $\mu$ is compactly supported.
\end{remark}

\section{Special cases and examples}\label{sec:example}

In this section, we attempt to give a better description of the
probability measure $\mu$, which appears as the LSD in Theorems
\ref{gaussian.t1} and \ref{inv.main}, in some special cases. As in Section \ref{sec:stieltjes}, $R(\cdot,\cdot)$ are the
correlations of a weakly stationary mean zero {\bf variance one} process
$(Y_{ij}:i,j\in\bbz)$. As always,  \eqref{eq.symmetry} and \eqref{eq.defrbar} are assumed, and $\mu$ is the unique even probability measure whose  $2m$-th moment equals $\beta_{2m}$ which is as defined in \eqref{eq.defbeta}.
The first main result of this section is the following.

\begin{theorem}\label{example.t1}
 Assume that
\begin{equation}\label{example.eq1}
 R(u,v)=R(u,0)R(0,v),\,u,v\in\bbz\,.
\end{equation}
Then, the function $r(\cdot)$ defined on $[-\pi,\pi]$ by
\[
 r(x):=\sum_{k=-\infty}^\infty R(k,0)e^{-ikx},\,-\pi\le x\le\pi\,,
\]
is a well defined function, that is the sum on the right hand side converges absolutely, and its range is contained in $[0,\infty)$. Furthermore,
\[
 \mu=\mu_r\boxtimes\mu_s\,,
\]
where $\mu_r$ denotes the law of $r(U)$, $U$ being a Uniform $(-\pi,\pi)$ random variable, $\mu_s$ denotes the WSL whose density is given by
\begin{equation}\label{example.eq2}
 \mu_s(dx):=\frac{\sqrt{4-x^2}}{2\pi}\one(|x|\le2)dx\,,
\end{equation}
and `` $\boxtimes$'' denotes the free product convolution.
\end{theorem}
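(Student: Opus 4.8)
The plan is to match the moments of $\mu$, given by \eqref{eq.defbeta}, against those of $\mu_r\boxtimes\mu_s$, and then invoke moment-determinacy. First I would record the elementary facts about $r$. Since $R(0,0)=1$ and, by Assumption 1 together with \eqref{gaussian.eq13}, $R(k,0)=R(0,k)=R(-k,0)$, the sequence $(R(k,0))_{k\in\bbz}$ is real and symmetric. Hypothesis \eqref{example.eq1} forces $|R(u,v)|=|R(u,0)|\,|R(0,v)|$, so Assumption 2 gives $\bar R=\bigl(\sum_k|R(k,0)|\bigr)^2$; in particular $\sum_k|R(k,0)|<\infty$, which makes $r$ well defined, bounded, continuous and even. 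For nonnegativity I would note that $(R(k,0))_k$ is the autocovariance sequence of the stationary sequence $(Y_{i,0})_{i\in\bbz}$, hence positive semidefinite; by Herglotz's theorem it is the Fourier transform of a nonnegative measure whose density (continuous, by summability) is a positive multiple of $r$, so $r\ge0$ on $[-\pi,\pi]$. Thus $\mu_r$, the law of $r(U)$, is a compactly supported measure on $[0,\infty)$, so the free multiplicative convolution $\mu_r\boxtimes\mu_s$ is well defined.

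Next I would simplify $\beta_{2m}$. Under \eqref{example.eq1} and symmetry, for any $\sigma\in NC_2(2m)$ and $k\in S(\sigma)$,
\[
 \prod_{(u,v)\in\sigma}R(k_u,k_v)=\prod_{(u,v)\in\sigma}R(k_u,0)R(k_v,0)=\prod_{j=1}^{2m}R(k_j,0)\,,
\]
since each index in $\{1,\dots,2m\}$ lies in exactly one pair of $\sigma$. Writing $g(k):=R(k,0)$ and enforcing each constraint $\sum_{j\in V_s}k_j=0$ defining $S(\sigma)$ via $\one(\sum_{j\in V_s}k_j=0)=\int_0^1\exp(2\pi i\theta_s\sum_{j\in V_s}k_j)\,d\theta_s$, the sum over $k\in S(\sigma)$ factorizes over the Kreweras blocks $V_1,\dots,V_{m+1}$. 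Using $\sum_k g(k)e^{2\pi i\theta k}=r(-2\pi\theta)=r(2\pi\theta)$, each block of size $|V_s|$ contributes $\int_0^1 r(2\pi\theta)^{|V_s|}\,d\theta=E[r(U)^{|V_s|}]=\int x^{|V_s|}\,d\mu_r(x)=:m_{|V_s|}$, the interchange of sum and integral being justified by $\bar R<\infty$ exactly as in the proof of Lemma \ref{lemma:alternate}. This yields the key identity
\[
 \beta_{2m}=\sum_{\sigma\in NC_2(2m)}\prod_{V\in K(\sigma)}m_{|V|}\,.
\]

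It then remains to identify the right-hand side as the $2m$-th moment of $\mu_r\boxtimes\mu_s$. I would realize the latter in a tracial noncommutative probability space $(\mathcal A,\tau)$ as the distribution of $T:=\sqrt D\,S\,\sqrt D$, where $D\ge0$ has law $\mu_r$, $S$ is a standard semicircular element, and $D,S$ are free. By traciality $\tau(T^{2m})=\tau\bigl((DS)^{2m}\bigr)$, the trace of the word $DSDS\cdots DS$ with $2m$ copies of each. Expanding by the moment–cumulant formula over $NC(4m)$, freeness of $D$ and $S$ forces every block to lie entirely among the $S$-positions or entirely among the $D$-positions, and semicircularity forces the $S$-blocks to form a pairing $\sigma\in NC_2(2m)$, each pair contributing $\kappa_2(S,S)=1$. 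For fixed $\sigma$ the admissible non-crossing partitions $\rho$ of the $D$-positions are exactly those with $\rho\le K(\sigma)$; summing $\prod_{B\in\rho}\kappa_{|B|}(D,\dots,D)$ over that interval factorizes over the blocks of $K(\sigma)$ and, by the moment–cumulant formula on each block, returns $\prod_{V\in K(\sigma)}\tau(D^{|V|})=\prod_{V\in K(\sigma)}m_{|V|}$. Hence $\tau(T^{2m})=\beta_{2m}$, while odd moments of $T$ vanish since $NC_2$ of an odd set is empty. As both $\mu$ and $\mu_r\boxtimes\mu_s$ are compactly supported, hence moment-determinate (Theorem \ref{gaussian.t1} and Remark \ref{rem:bounded} for $\mu$), equality of all moments gives $\mu=\mu_r\boxtimes\mu_s$.

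The main obstacle I anticipate is the bookkeeping in the last step: verifying that the block-size data produced by the $S(\sigma)$ constraints (the Kreweras blocks $V_1,\dots,V_{m+1}$) coincide block for block with the Kreweras complement governing the admissible $D$-partitions in the semicircular moment expansion, and invoking the precise cyclic form of the semicircular mixed-moment formula, so that the two computations line up exactly rather than merely in aggregate.
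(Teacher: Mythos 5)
Your proposal is correct and follows essentially the same route as the paper: reduce $\beta_{2m}$ under \eqref{example.eq1} to $\sum_{\sigma\in NC_2(2m)}\prod_{V\in K(\sigma)}\int x^{|V|}\mu_r(dx)$ and match this against the moment formula for $\mu_r\boxtimes\mu_s$, concluding by compact support and moment determinacy. The only difference is cosmetic: where the paper cites Theorem 14.4 of \cite{nica:speicher:2006} for the $\boxtimes$ moment formula and Corollary 4.3.2 of \cite{brockwell:davis:1991} for nonnegativity of $r$, you rederive the former via free cumulants of $\sqrt{D}\,S\sqrt{D}$ and the latter via Herglotz, both of which are sound.
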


\begin{remark}\label{ext.rem1}
 In \cite{bercovici:voiculescu:1993}, the free multiplicative convolution of two probability measures with possibly unbounded support, {\bf at least one of which is supported on a subset of $[0,\infty)$}, has been defined. Hence, in the above result, the claim that $r(\cdot)$ is non-negative is needed.
\end{remark}

\begin{proof}[Proof of Theorem \ref{example.t1}]
 In view of assumptions \eqref{eq.defrbar} and \eqref{example.eq1}, it is easy to see that
\begin{equation}\label{example.t1.eq1}
 \sum_{k=-\infty}^\infty|R(k,0)|={\bar R}^{1/2}<\infty\,,
\end{equation}
and hence the infinite sum defining $r(x)$ is absolutely convergent. Observing that $(R(k,0):k\in\bbz)$ is the autocovariance function of the one-dimensional process $(Y_{-k,0}:k\in\bbz)$, Corollary 4.3.2, page 120 of \cite{brockwell:davis:1991} implies that
\[
 r(x)\in[0,\infty),\,-\pi\le x\le\pi\,.
\]
This, in view of \eqref{example.t1.eq1}, establishes that the range of $r(\cdot)$ is a compact subset of $[0,\infty)$.

For establishing the other claim, we shall use Theorem 14.4 of \cite{nica:speicher:2006} which applies to compactly supported probability measures. From that result, it follows that $\mu_r\boxtimes\mu_s$ is an even probability measure, and
\begin{equation}\label{eq.moments}
\int x^{2m}(\mu_r\boxtimes\mu_s)(dx)=\sum_{\sigma\in NC_2(2m)}\prod_{j=1}^{m+1}\int x^{l_j^\sigma}\mu_r(dx),\,m\ge1\,, 
\end{equation}
where for any $\sigma\in NC_2(2m)$, $l_1^\sigma,\ldots,l_{m+1}^\sigma$ denote the block sizes of the Kreweras complement of $\sigma$. It is easy to see from the definition of $r(\cdot)$ that
\[
 \int x^j\mu_r(dx)=\sum_{(k_1,\ldots,k_j)\in\bbz^j:k_1+\ldots+k_j=0}\,\,\prod_{i=1}^jR(k_i,0),\,j\ge1\,.
\]
For $\sigma\in NC_2(2m)$ let the notation for its Kreweras complement be as in \eqref{eq.defV}, and recall the definition of $S(\sigma)$ from \eqref{eq.defS}. The above two identities put together imply that for $m\ge1$,
\begin{eqnarray*}
 \int x^{2m}(\mu_r\boxtimes\mu_s)(dx)&=&\sum_{\sigma\in NC_2(2m)}\sum_{k\in S(\sigma)}\prod_{j=1}^{2m}R(k_j,0)\\
&=&\sum_{\sigma\in NC_2(2m)}\sum_{k\in S(\sigma)}\prod_{(u,v)\in\sigma}R(k_u,0)R(k_v,0)\\
&=&\beta_{2m}
\end{eqnarray*}
\begin{eqnarray*}
&=&\int x^{2m}\mu(dx)\,,
\end{eqnarray*}
the equality in the second last line following from \eqref{example.eq1}. This completes the proof.
\end{proof}

The next result is the other main result of this section.

\begin{theorem}\label{example.t2}
 If 
\begin{equation}\label{example.eq3}
 R(k,0)=0\mbox{ for all }k\neq0\,,
\end{equation}
then $\mu=\mu_s$, where $\mu_s$ is the WSL as defined in \eqref{example.eq2}.
\end{theorem}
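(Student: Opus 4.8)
The plan is to show that under \eqref{example.eq3} the even moments $\beta_{2m}$ of \eqref{eq.defbeta} collapse to the Catalan numbers $\#NC_2(2m)$, which are precisely the even moments of the semicircular law $\mu_s$ in \eqref{example.eq2}; since $\mu$ is determined by its moments by Theorem~\ref{gaussian.t1} and both laws are even, this identifies $\mu=\mu_s$. Thus everything reduces to proving that for each fixed $\sigma\in NC_2(2m)$,
\[
\sum_{k\in S(\sigma)}\prod_{(u,v)\in\sigma}R(k_u,k_v)=1,
\]
with $S(\sigma)$ as in \eqref{eq.defS}; summing over the $\#NC_2(2m)$ partitions then gives $\beta_{2m}=\#NC_2(2m)$.

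First I would record the only two facts about $R$ that enter: $R(0,0)=1$ (variance one) and, combining \eqref{example.eq3} with the symmetry \eqref{eq.symmetry}, $R(k,0)=R(0,k)=0$ for every $k\neq0$. In particular $R(a,b)=0$ whenever exactly one of $a,b$ vanishes, so a nonzero factor $R(k_u,k_v)$ forces $k_u$ and $k_v$ to be either both zero or both nonzero. The displayed claim is therefore equivalent to the assertion that the only $k\in S(\sigma)$ giving a nonzero product is $k=0$, in which case the product equals $R(0,0)^m=1$.

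I would establish this by induction on $m$, peeling off a nearest-neighbour pair. The combinatorial input I would quote is that every $\sigma\in NC_2(2m)$ contains a pair of adjacent indices $(p,p+1)$, and that for such a pair the gap $\overline p$ is a singleton block of the Kreweras complement of $\sigma$. The corresponding constraint in \eqref{eq.defS} then reads $k_p=0$, so the factor attached to $(p,p+1)$ is $R(k_p,k_{p+1})=R(0,k_{p+1})$, which is nonzero only when $k_{p+1}=0$. Deleting $\{p,p+1\}$ and relabelling yields a $\sigma'\in NC_2(2m-2)$: in the Kreweras complement the singleton $\overline p$ disappears and the two outer gaps $\overline{p-1},\overline{p+1}$ merge, and since $k_{p+1}=0$ the constraints on the remaining coordinates are exactly those defining $S(\sigma')$. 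Hence the nonzero-product configurations of $S(\sigma)$ are in bijection with those of $S(\sigma')$, the peeled factor contributing $R(0,0)=1$, and the inductive hypothesis $\sum_{k'\in S(\sigma')}\prod_{\sigma'}R=1$ closes the step; the base case $m=1$ is immediate since $S(\{(1,2)\})=\{(0,0)\}$.

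The main obstacle I anticipate is the bookkeeping of the Kreweras complement under this peeling: one must verify carefully that deleting an adjacent pair collapses its inner singleton gap and merges precisely the two outer gaps $\overline{p-1},\overline{p+1}$, so that $S(\sigma)$ restricted to $k_p=k_{p+1}=0$ coincides with $S(\sigma')$ after relabelling. Everything else---the reduction to $\beta_{2m}=\#NC_2(2m)$, the forcing via $R(0,\cdot)=0$, and the final moment-determinacy step---is routine. An equivalent non-inductive phrasing is to note that the singleton blocks pin some coordinates to zero, after which the factors $R(0,\cdot)$ and the remaining block constraints propagate the value zero through all of $\{1,\dots,2m\}$; the induction, however, makes this propagation automatic and avoids having to rule out a deadlock for Kreweras blocks of size $\geq 3$ (which do occur, e.g. for $\sigma=\{(1,2),(3,4),\dots,(2m-1,2m)\}$).
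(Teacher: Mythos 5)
Your proposal is correct and follows essentially the same route as the paper: reduce to showing $\sum_{k\in S(\sigma)}\prod_{(u,v)\in\sigma}R(k_u,k_v)=1$ for each $\sigma\in NC_2(2m)$, then prove by induction on $m$ that any $k\in S(\sigma)$ with nonzero product must be identically zero, peeling off an adjacent pair $(j,j+1)$ whose inner gap is a singleton Kreweras block (forcing $k_j=0$, hence $k_{j+1}=0$ via $R(0,\cdot)=0$) and passing to the reduced partition. The bookkeeping of the Kreweras complement under deletion that you flag is exactly the step the paper dispatches as "easy to see," and your description of it is accurate.
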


\begin{proof}
 Clearly, it suffices to show that for all $m\ge1$ and $\sigma\in NC_2(2m)$,
\begin{equation}\label{example.t2.eq1}
 \sum_{k\in S(\sigma)}\prod_{(u,v)\in\sigma}R(k_u,k_v)=1\,,
\end{equation}
where $S(\sigma)$ is as in \eqref{eq.defS}. This is because, if the above is established, then it will follow that
\[
 \beta_{2m}=\#NC_2(2m)=\int x^{2m}\mu_s(dx)\,.
\]
In order to show \eqref{example.t2.eq1}, fix $m\ge1$ and $\sigma\in
NC_2(2m)$. What we shall show is that if $k\in S(\sigma)$ is such that
\begin{equation}\label{example.t2.eq2}
 \prod_{(u,v)\in\sigma}R(k_u,k_v)\neq0\,,
\end{equation}
then,
\begin{equation}\label{example.t2.eq3}
 k_1=\ldots=k_{2m}=0\,.
\end{equation}
Recalling that $R(0,0)=1$, which is a consequence of the assumption that the process $(Y_{ij}:i,j\in\bbz)$ mentioned at the beginning of this section, has variance one, the above will imply \eqref{example.t2.eq1}.

The claim \eqref{example.t2.eq3} is a tautology when $m=1$. As the induction hypothesis, we
assume that for a fixed $m\ge1$ and {\bf all} $\sigma\in NC_2(2m)$,
\eqref{example.t2.eq2} implies \eqref{example.t2.eq3}. To complete the
induction step, fix $\sigma\in NC_2(2m+2)$, and let
\eqref{example.t2.eq2} hold. By the property of non-crossing pair
partition, there exists $j\in\{1,\ldots,2m+1\}$ such that
$(j,j+1)\in\sigma$. Recalling that $K(\sigma)$, the Kreweras
complement of $\sigma$, is the maximal partition $\overline\sigma$ of
$\{\overline 1,\ldots,\overline {2m}\}$ such that
$\sigma\cup\overline\sigma$ is a non-crossing partition of
$\{1,\overline1,\ldots,2m,\overline{2m}\}$, it follows that $(\bar
j)\in K(\sigma)$. Hence,
\[
 k_j=0\,.
\]
Since \eqref{example.t2.eq2} holds, it follows that
\[
 R(k_j,k_{j+1})\neq0\,,
\]
which along with \eqref{example.eq3} implies that
\[
 k_{j+1}=0\,.
\]
If $\bar\sigma$ denotes the element of $NC_2(2m)$ obtained from $\sigma$ by deleting $(j,j+1)$ and the obvious relabeling, then it is easy to see that
\[
 (\bar k_1,\ldots,\bar k_{2m}):=(k_1,\ldots,k_{j-1},k_{j+2},\ldots,k_{2m+2})\in S(\bar\sigma)\,,
\]
and 
\[
 \prod_{(u,v)\in\bar\sigma}R(\bar k_u,\bar k_v)\neq0\,.
\]
By the induction hypothesis, it follows that
\[
 \bar k_1=\ldots=\bar k_{2m}=0\,,
\]
which establishes the induction step, and thereby completes the proof.
\end{proof}

Now, we shall see the relevance of the two main results proved above
in the light of Theorem \ref{inv.main}.

\subsection*{Corollary of Theorem \ref{example.t1}} 

Let $\{\epsilon_{i,j}:i,j\in\bbz\}$ be as in Section \ref{sec:inv}; in
particular, the Pastur condition \eqref{inv.pastur} holds. Let
$\{c_k:k\in\bbz\}$ be a sequence of real numbers such that
\begin{equation}\label{example.neweq1}
\sum_{k=-\infty}^\infty |c_{k}|<\infty\,,
\end{equation}
and
\begin{equation}\label{example.neweq2}
 \sum_{k=-\infty}^\infty c_k^2=1\,.
\end{equation}
Set
\[
 c_{k,l}:=c_kc_l,\,k,l\in\bbz\,.
\]
Define $Z_{i,j}$ and $R(\cdot,\cdot)$ by \eqref{inv.defZ} and \eqref{inv.defR} respectively.  Clearly, \eqref{inv.summable} and \eqref{inv.sym} hold, and the process $(Z_{i,j}:i,j\in\bbz)$ is weakly stationary with mean zero and variance one.
 Also, 
\begin{eqnarray*}
 R(u,v)&=&\left(\sum_kc_kc_{k-u}\right)\left(\sum_lc_lc_{l+v}\right)\\
&=&\left(\sum_k\sum_{k^\prime}c_kc_{k-u}c_{k^\prime}^2\right)\left(\sum_l\sum_{l^\prime}c_lc_{l+v}c_{l^\prime}^2\right)\\
&=&R(u,0)R(0,v)\,,
\end{eqnarray*}
the second equality following from \eqref{example.neweq2}. Let $A_n$ and $\mu_n$ be as in
\eqref{gaussian.defA_n} and \eqref{gaussian.defmu_n} respectively,
that is, the former is the $n\times n$ matrix whose $(i,j)$-th entry
is $Z_{i\wedge j,i\vee j}$, and the latter is the ESD of $A_n/\sqrt
n$.  Let $\mu_r$ and $\mu_s$ be as in the statement of Theorem
\ref{example.t1}. Then, as a corollary of the result mentioned above
and Theorem \ref{inv.main}, it follows that, $\mu_n$ converges weakly
in probability to $\mu_r\boxtimes\mu_s$.

\subsection*{Corollary of Theorem \ref{example.t2}}

Once again, let $\{\epsilon_{i,j}:i,j\in\bbz\}$ be as in Section
\ref{sec:inv} satisfying \eqref{inv.pastur}. Assume that
$\{c_{k,l}:k,l\in\bbz\}\subset\bbr$ is such that \eqref{inv.summable}
and \eqref{inv.sym} hold, and furthermore
\begin{eqnarray}
\sum_{l=-\infty}^\infty c_{k,l}c_{k^\prime,l}&=&\one(k=k^\prime)\mbox{ for all }k,k^\prime\in\bbz\,.\label{eq.orth1}
\end{eqnarray}
As before, let $Z_{i,j}$, $A_n$ and
$\mu_n$ be as in \eqref{inv.defZ}, \eqref{gaussian.defA_n} and
\eqref{gaussian.defmu_n} respectively.
It is easy to see that the conditions imposed above ensure that $(Z_{i,j}:i,j\in\bbz)$ is a mean zero variance one weakly stationary process, and that
\eqref{example.eq3} holds.   Then by Theorem \ref{inv.main}
and Theorem \ref{example.t2}, it follows that $\mu_n$ converges weakly
in probability to $\mu_s$ which is the WSL defined in
\eqref{example.eq2}.

We end this section by revisiting Examples 1 to 4 mentioned in Section
\ref{sec:intro}.

\noindent{\bf Example 1.} To start with, one needs to argue the existence of a stationary centered Gaussian process $\{Z_{i,j}:i,j\in\bbz\}$ satisfying
\[
 E[Z_{0,0}Z_{u,v}]=\rho^{|u|+|v|},\,u,v\in\bbz\,.
\]
That, however, is obvious from the observation that
\[
 \rho^{|u|+|v|}=\int_{(-\pi,\pi]^2}e^{i(ux+vy)}F(dx)F(dy),\,u,v\in\bbz\,,
\]
where $F$ is the spectral measure of the autocovariance function
$(\rho^{|h|}:h\in\bbz)$; see Herglotz theorem (Theorem 4.3.1 in
\cite{brockwell:davis:1991}). By Theorem \ref{example.t1} and results
about the AR($1$) process, it follows that $\mu_n$ converges in
probability to $\mu_r\boxtimes\mu_s$, where $\mu_r$ is the law of
$\frac{1-\rho^2}{1-2\rho\cos U+\rho^2}$, $U$ being an Uniform
$(-\pi,\pi)$ random variable.

\noindent{\bf Example 2.} Notice that the $(i,j)$-th entry of $A_n$ is given by
\[
 (N+1)\sum_{k,l\in\bbz}c_kc_lG_{i-k,j-l}=:(N+1)Y_{i,j}\,,
\]
where $c_k:=(N+1)^{-1/2}\one(-N\le k\le0)$. Then \eqref{example.neweq1} and \eqref{example.neweq2} hold, and therefore, the ESD of $((Y_{i,j}/\sqrt n))_{n\times n}$ converges to $\mu_r\boxtimes\mu_s$, where $\mu_r$ is the law of
\[
 1+2(N+1)^{-2}\sum_{k=1}^N(N-k+1)^2\cos(kU)\,,
\]
$U$ being distributed as Uniform $(-\pi,\pi)$. Hence the LSD of $A_n/\sqrt n$ is the free product convolution of $\mu_s$ with the law of
\[
 N+ 1+2(N+1)^{-1}\sum_{k=1}^N(N-k+1)^2\cos(kU)\,.
\]

\noindent{\bf Example 3.} By Theorem \ref{example.t2}, it follows that under the additional assumption that $\sum_{n=1}^\infty|E(G_0G_n)|<\infty$, the LSD of $A_n/\sqrt n$ is $\mu_s$.

\noindent{\bf Example 4.} Setting
\[
 \sigma:=\left(\sum_{k=-\infty}^\infty\sum_{l=-\infty}^\infty c_{k,l}^2\right)^{1/2}\,,
\]
it is easy to see from the Corollary of Theorem \ref{example.t2} that the LSD of\\ $\sigma^{-1}A_n/\sqrt n$ is $\mu_s$. Therefore, the LSD of
 $A_n/\sqrt n$ is $\tilde\mu_s$ given by
\[
 \tilde\mu_s(dx):=\frac{\sqrt{4-x^2/\sigma^2}}{2\pi\sigma}\one(|x|\le2\sigma)dx\,,
\]
which is a dilation of the WSL.

\section{An extension of Theorem \ref{example.t1}}\label{sec:ext}  In this section, we generalize Theorem \ref{example.t1} to the case when the covariances $R(u,v)$ are  not necessarily summable. Suppose that $(Z_{i,j}:i,j\in\bbz)$ is a stationary mean zero variance one Gaussian process. Define $A_n$, $\mu_n$ and $R(\cdot,\cdot)$ by \eqref{gaussian.defA_n}, \eqref{gaussian.defmu_n} and \eqref{gaussian.defR} respectively. The first assumption is, as before, that 
\begin{equation}\label{ext.assume1}
 R(u,v)=R(u,0)R(v,0),\,u,v\in\bbz\,.
\end{equation}
The second assumption, the one that replaces the summability of $R(u,v)$, is that the spectral measure of the one dimensional process $(Z_{i,0}:i\in\bbz)$ is absolutely continuous with respect to the Lebesgue measure. This means that there exists a non-negative integrable function $r$ on $[-\pi,\pi]$ such that
\begin{equation}\label{ext.ac}
 (2\pi)^{-1}\int_{-\pi}^\pi e^{inx}r(x)dx=R(n,0),\,n\in\bbz\,.
\end{equation}
Strictly speaking, $(2\pi)^{-1}r(\cdot)$ is the density of the spectral measure.
Since $R(n,0)=R(-n,0)$, it follows that $r(\cdot)$ is symmetric.
As in Theorem \ref{example.t1}, denote by $\mu_r$ the law of $r(U)$ where $U$ is an Uniform$(-\pi,\pi)$ random variable. The main result of this section is the following.

\begin{theorem}\label{ext.t1}
 As $n\to\infty$, $\mu_n$ converges weakly in probability to $\mu_r\boxtimes\mu_s$, where $\mu_s$ is the WSL.
\end{theorem}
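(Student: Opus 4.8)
The plan is to approximate the (possibly non-summable) spectral density $r$ by trigonometric polynomials, apply Theorem~\ref{example.t1} to the resulting finite-range Gaussian fields, and transfer the conclusion to $(Z_{i,j})$ by coupling the fields and invoking the Hoffman--Wielandt inequality together with the weak continuity of $\boxtimes$.

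First I would set up the approximation. Let $r_k$ be the $k$-th Fej\'er mean of $r$. Since the Fej\'er kernel is non-negative, each $r_k$ is non-negative; since it has total mass $2\pi$, one has $(2\pi)^{-1}\int_{-\pi}^\pi r_k=(2\pi)^{-1}\int_{-\pi}^\pi r=R(0,0)=1$; and $r_k$ is a trigonometric polynomial of degree $k$, so its Fourier coefficients $R_k(n,0):=(2\pi)^{-1}\int_{-\pi}^\pi e^{inx}r_k(x)dx$ vanish for $|n|>k$ and are thus absolutely summable. Fej\'er's theorem gives $r_k\to r$ in $L^1([-\pi,\pi])$, whence $r_k(U)\to r(U)$ in $L^1$ for $U$ uniform on $(-\pi,\pi)$, so $\mu_{r_k}\to\mu_r$ weakly. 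I will also use that $\sqrt{r_k}\to\sqrt r$ in $L^2$: this follows from the $L^1$ convergence by a Scheff\'e-type argument, since along any subsequence $r_k\to r$ a.e., $\sqrt{r_kr}\le\tfrac12(r_k+r)$, and $\int(r_k+r)\to2\int r$, forcing $\int(\sqrt{r_k}-\sqrt r)^2=\int r_k+\int r-2\int\sqrt{r_kr}\to0$. Setting $R_k(u,v):=R_k(u,0)R_k(v,0)$ gives a summable, symmetric, factorized covariance with $R_k(0,0)=1$; it is non-negative definite (a tensor product of the non-negative definite $R_k(\cdot,0)$, whose spectral density is $r_k\ge0$), so there is a stationary mean zero variance one Gaussian field $(Z^{(k)}_{i,j})$ with these covariances. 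Writing $A_n^{(k)}$ and $\mu_n^{(k)}$ for the associated matrix and ESD, Theorems~\ref{gaussian.t1} and~\ref{example.t1} apply and yield that $\mu_n^{(k)}$ converges weakly in probability to $\mu_{r_k}\boxtimes\mu_s$ (the function ``$r$'' attached to $Z^{(k)}$ being exactly $r_k$).

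Next I would couple $Z$ and $Z^{(k)}$ on one probability space. Because $r$ is even, $(Z_{i,j})$ has a spectral representation driven by a complex Gaussian orthogonal measure $W$ on $(-\pi,\pi]^2$ with $E|W(dx,dy)|^2=dx\,dy$ and spectral density $(2\pi)^{-2}r(x)r(y)$; defining $(Z^{(k)}_{i,j})$ from the same $W$ but with density $(2\pi)^{-2}r_k(x)r_k(y)$ realizes both prescribed laws on a common space, jointly stationary. Then
\[
 E\bigl[(Z_{0,0}-Z^{(k)}_{0,0})^2\bigr]=\frac1{(2\pi)^2}\int_{(-\pi,\pi]^2}\bigl(\sqrt{r(x)r(y)}-\sqrt{r_k(x)r_k(y)}\bigr)^2dx\,dy,
\]
which tends to $0$ as $k\to\infty$ by the $L^2$ convergence of $\sqrt{r_k}$ and the uniform bound $\int r_k=2\pi$. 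Exactly as in the proof of Theorem~\ref{inv.main}, the Hoffman--Wielandt inequality (Corollary A.41 of \cite{bai:silverstein:2010}) then gives
\[
 E\bigl[L^3(\mu_n,\mu_n^{(k)})\bigr]\le\frac1n E\Tr\Bigl[(A_n/\sqrt n-A_n^{(k)}/\sqrt n)^2\Bigr]=E\bigl[(Z_{0,0}-Z^{(k)}_{0,0})^2\bigr]=:\vep_k,
\]
with $\vep_k\to0$ uniformly in $n$.

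Finally I would pass to the limit. By \cite{bercovici:voiculescu:1993}, $\boxtimes$ is weakly continuous on probability measures supported on $[0,\infty)$, so $\mu_{r_k}\boxtimes\mu_s\to\mu_r\boxtimes\mu_s$ weakly; set $\eta_k:=L(\mu_{r_k}\boxtimes\mu_s,\mu_r\boxtimes\mu_s)\to0$. For any $\delta>0$ and any $k$, the triangle inequality for $L$ and Markov's inequality give
\[
 P\bigl(L(\mu_n,\mu_r\boxtimes\mu_s)>\delta\bigr)\le\frac{\vep_k}{(\delta/3)^3}+P\bigl(L(\mu_n^{(k)},\mu_{r_k}\boxtimes\mu_s)>\delta/3\bigr)+\one\{\eta_k>\delta/3\}.
\]
Letting $n\to\infty$ annihilates the middle term by the conclusion of the second step, and then letting $k\to\infty$ along $k$ with $\eta_k<\delta/3$ annihilates the remaining terms, proving $L(\mu_n,\mu_r\boxtimes\mu_s)\prob0$. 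I expect the main obstacle to be the weak continuity of $\boxtimes$ in the unbounded-support regime: since $r$ is merely integrable, $\mu_r$ may have unbounded support and infinite higher moments, so the moment method behind Theorem~\ref{gaussian.t1} is unavailable and one must lean on the Bercovici--Voiculescu theory; a secondary technical point is exhibiting the spectral coupling of $Z$ and $Z^{(k)}$ with the correct marginals and small $L^2$ distance.
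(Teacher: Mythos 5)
Your proposal follows the same overall architecture as the paper's proof --- approximate by a model with summable covariances to which Theorems \ref{gaussian.t1} and \ref{example.t1} apply, control $L^3(\mu_n,\mu_n^{(k)})$ uniformly in $n$ via the Hoffman--Wielandt inequality, and pass to the limit using weak continuity of $\boxtimes$ --- but your approximation and coupling step is genuinely different. The paper proves Lemma \ref{ext.l1}: the Fourier coefficients $c_k$ of $\sqrt r$ are square summable and $Z_{i,j}\eid\sum_{k,l}c_kc_lG_{i-k,j-l}$, so one truncates the moving-average coefficients at level $m$ and the two fields are automatically coupled through the common i.i.d.\ array $(G_{i,j})$, the $L^2$ error being $\sum_{|k|\vee|l|>m}c_k^2c_l^2$. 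You instead work in the spectral domain, replacing $r$ by its Fej\'er means $r_k$ and coupling through a common spectral white noise; your verifications that $R_k(0,0)=1$ (so the factorized covariance is consistent and has variance one), that the truncated covariances are realizable and satisfy the hypotheses of Theorem \ref{example.t1}, and that $\sqrt{r_k}\to\sqrt r$ in $L^2$ by a Scheff\'e argument are all correct, and the resulting bound $E[(Z_{0,0}-Z^{(k)}_{0,0})^2]\to0$ does the same job as the paper's tail estimate. Both routes work; the paper's has the minor advantage that the coupling is explicit and requires no construction of the spectral representation of a real Gaussian field.

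The one genuine gap --- which you yourself flag but do not close --- is the assertion that $\mu_{r_k}\boxtimes\mu_s\to\mu_r\boxtimes\mu_s$ weakly ``by \cite{bercovici:voiculescu:1993}''. Corollary 6.7 there gives weak continuity of $\boxtimes$ for probability measures supported on $[0,\infty)$, but $\mu_s$ is symmetric, not non-negative, so that result does not apply directly to the pair $(\mu_{r_k},\mu_s)$. The paper closes this by squaring: it first obtains $\mu_s^2\boxtimes\mu_{r_m}^2\weak\mu_s^2\boxtimes\mu_r^2$, where both factors are now supported on $[0,\infty)$, and then invokes Lemma 8 of \cite{arizmendi:abreu:2009}, which identifies $\nu_1\boxtimes\nu_2$ with $\sqrt{\nu_1^2\boxtimes\nu_2^2}$ for symmetric $\nu_1$ and non-negative $\nu_2$, together with the continuity of the square-root map on measures. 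You need to insert this step (or an equivalent); without it the convergence $\eta_k\to0$ in your final display is not justified.
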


For the proof of Theorem \ref{ext.t1}, we shall need the following lemma, which is an observation of independent interest.

\begin{lemma}\label{ext.l1}
 Define
\begin{equation}\label{ext.l1.eq1}
 c_k:=(2\pi)^{-1}\int_{-\pi}^\pi e^{ikx}\sqrt{r(x)}\,dx,\,k\in\bbz\,,
\end{equation}
the integral being defined because $\sqrt{r(\cdot)}\in L^2([-\pi,\pi])$, and real because $r(\cdot)$ is symmetric. Then,
\begin{equation}\label{ext.l1.sqsum}
 \sum_{k=-\infty}^\infty c_k^2<\infty\,,
\end{equation}
and thus the sum $\sum_{k,l\in\bbz}c_{k}c_lG_{i-k,j-l}$ converges in $L^2$ for all $i,j$, 
where $(G_{i,j}:i,j\ge1)$ is a family of i.i.d. standard Gaussian random variables. Furthermore,
\[
 (Z_{i,j}:i,j\in\bbz)\eid\left(\sum_{k,l\in\bbz}c_{k}c_lG_{i-k,j-l}:i,j\in\bbz\right)\,.
\]
\end{lemma}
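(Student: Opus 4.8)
The plan is to recognise the right-hand field
\[
 W_{i,j}:=\sum_{k,l\in\bbz}c_kc_lG_{i-k,j-l}
\]
as a centered Gaussian process and to reduce the claimed distributional identity to an equality of covariance functions, since a centered Gaussian process is determined by its covariance. Concretely, the three things to establish are the square-summability \eqref{ext.l1.sqsum}, the $L^2$-convergence defining $W_{i,j}$, and finally $E(W_{i,j}W_{i',j'})=E(Z_{i,j}Z_{i',j'})$ for all indices. The first two are soft; the substance lies in the covariance computation, which rests on a single harmonic-analysis identity.

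For the square-summability I would note that $\sqrt{r}\in L^2([-\pi,\pi])$, because $\int_{-\pi}^\pi(\sqrt{r})^2=\int_{-\pi}^\pi r<\infty$ as $r$ is an integrable spectral density. Thus the $c_k$ of \eqref{ext.l1.eq1} are precisely the Fourier coefficients of $\sqrt{r}$, and Parseval's identity gives $\sum_k c_k^2=(2\pi)^{-1}\int_{-\pi}^\pi r(x)\,dx<\infty$, which is \eqref{ext.l1.sqsum}; symmetry of $r$ forces each $c_k$ to be real and to satisfy $c_{-k}=c_k$. Square-summability of $(c_k)$ immediately yields $\sum_{k,l}(c_kc_l)^2=\left(\sum_kc_k^2\right)^2<\infty$, so the partial sums of $\sum_{k,l}c_kc_lG_{i-k,j-l}$ are Cauchy in $L^2$ (the $G$'s being orthonormal in $L^2$), establishing convergence and defining $W_{i,j}$. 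Being an $L^2$-limit of finite linear combinations of the i.i.d.\ Gaussian family, $W=(W_{i,j})$ is a centered Gaussian field, and it is manifestly stationary.

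The heart of the argument is the identity $R(n,0)=\sum_k c_kc_{n-k}$ for all $n\in\bbz$. This I would obtain by writing $r=(\sqrt{r})^2$ and invoking the standard fact that the Fourier coefficients of a product of two $L^2$ functions are the convolution of their individual coefficients, the convolution converging absolutely by Cauchy--Schwarz. Hence the $n$-th Fourier coefficient of $r$, which is $R(n,0)$ by \eqref{ext.ac}, equals $(c\ast c)(n)=\sum_kc_kc_{n-k}$. Using this together with $c_{-k}=c_k$, a direct expansion of the covariance gives, for all $i,j,u,v$,
\[
 E(W_{i,j}W_{i-u,j+v})=\sum_{k,l}c_kc_lc_{k-u}c_{l+v}=\left(\sum_kc_kc_{k-u}\right)\left(\sum_lc_lc_{l+v}\right)=R(u,0)R(v,0)\,,
\]
where orthonormality of the $G$'s collapses the quadruple sum and the two factors are recognised as $(c\ast c)(u)$ and $(c\ast c)(v)$ via $c_{-k}=c_k$. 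By Assumption \eqref{ext.assume1} the last product equals $R(u,v)=E(Z_{i,j}Z_{i-u,j+v})$, so the two stationary centered Gaussian fields share a covariance function and are therefore equal in law, which is the final assertion.

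I expect the only genuine obstacle to be the justification of $R(n,0)=\sum_kc_kc_{n-k}$: one must be careful that $\sqrt{r}$ lies only in $L^2$, not necessarily in any better space, so the product formula for Fourier coefficients must be applied in the $L^2\times L^2\to L^1$ setting rather than assuming absolute convergence of the Fourier series of $\sqrt{r}$ itself. Everything else, including the passage from equality of covariances to equality in distribution, is routine once square-summability is in hand.
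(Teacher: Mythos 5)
Your proposal is correct and follows essentially the same route as the paper: Parseval for \eqref{ext.l1.sqsum}, reduction of the distributional identity to matching covariances of two stationary centered Gaussian fields, and the key identity $R(n,0)=\sum_k c_kc_{n-k}$ obtained from the fact that the Fourier coefficients of $r=(\sqrt r)^2$ are the convolution of those of $\sqrt r$. The paper simply proves that product formula by hand (the $L^2$ convergence of the partial Fourier sums of $\sqrt r$ gives $L^1$ convergence of their squares to $r$, whence the coefficient identity), which is exactly the $L^2\times L^2\to L^1$ care you flag at the end.
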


\begin{proof}
  The  claim \eqref{ext.l1.sqsum} follows from an application of the Parseval identity. 
Therefore, 
\[
 \left(\sum_{k,l\in\bbz}c_{k}c_lG_{i-k,j-l}:i,j\in\bbz\right)
\]
is clearly a mean zero stationary Gaussian process. To check that it has the same finite dimensional distributions as $(Z_{i,j}:i,j\in\bbz)$, it suffices to verify that the correlations match, that is,
\begin{equation}\label{ext.l1.eq2}
 R(u,v)=\sum_{k=-\infty}^\infty\sum_{l=-\infty}^\infty c_kc_{k-u}c_lc_{l+v},\,u,v\in\bbz\,.
\end{equation}
To that end, we start with the observation that
\begin{equation}\label{ext.l1.eq3}
 \lim_{N\to\infty}\sum_{k=-N}^Nc_ke^{-ikx}=\sqrt{r(x)},\,-\pi\le x\le\pi\,,
\end{equation}
in $L^2([-\pi,\pi])$, which follows from the fact that the Fourier series of a square integrable function converges in the $L^2$ norm to that function. Therefore, the square of the left hand side converges in $L^1$ to $r(x)$ as $N\to\infty$. Consequently, for fixed $n\in\bbz$,
\begin{eqnarray*}
 \int_{-\pi}^\pi e^{inx}r(x)dx&=&\lim_{N\to\infty}\int_{-\pi}^\pi e^{inx}\left(\sum_{k=-N}^Nc_ke^{-ikx}\right)^2dx\\
&=&\lim_{N\to\infty}2\pi\sum_{k=-N}^Nc_kc_{n-k}\\
&=&2\pi\sum_{k=-\infty}^\infty c_kc_{n-k}\\
&=&2\pi\sum_{k=-\infty}^\infty c_kc_{k-n}\,,
\end{eqnarray*}
the second last equality following from \eqref{ext.l1.sqsum}, and the last equality being an outcome of the fact that $\sqrt{r(\cdot)}$ is symmetric. 
Comparing this with \eqref{ext.ac}, it follows that
\[
 R(n,0)=\sum_{k=-\infty}^\infty c_kc_{k-n},\,n\in\bbz\,.
\]
 Using \eqref{ext.assume1}, it follows that for all $u,v\in\bbz$,
\begin{eqnarray*}
 R(u,v)&=&\left(\sum_{k=-\infty}^\infty c_kc_{k-u}\right)\left(\sum_{l=-\infty}^\infty c_lc_{l-v}\right)\\
&=&\left(\sum_{k=-\infty}^\infty c_kc_{k-u}\right)\left(\sum_{l=-\infty}^\infty c_{l+v}c_l\right)\,,
\end{eqnarray*}
thereby establishing \eqref{ext.l1.eq2}. This completes the proof.
\end{proof}

\begin{proof}[Proof of Theorem \ref{ext.t1}]
Let  $(G_{i,j}:i,j\ge1)$ be a family of i.i.d. standard Gaussian random variables, and let $\{c_k\}$ be as in \eqref{ext.l1.eq1}. In view  of Lemma \ref{ext.l1}, without loss of generality, we can and do assume that
\[
 Z_{i,j}=\sum_{k,l\in\bbz}c_{k}c_lG_{i-k,j-l},\,i,j\in\bbz\,.
\]
Denote
\begin{eqnarray*}
  Z_{i,j}^{(m)}&:=&\sum_{k=-m}^m\sum_{l=-m}^mc_{k}c_lG_{i-k,j-l},\,i,j\in\bbz,\,m\ge1\,,\\
R^{(m)}(u,v)&:=&E\left[Z_{0,0}^{(m)}Z_{-u,v}^{(m)}\right],\,u,v\in\bbz,\,m\ge1\,,\\
 A_n^{(m)}&:=&((Z_{i,j}^{(m)}))_{n\times n},\,n,m\ge1\,.
\end{eqnarray*}
If $\mu_n^{(m)}$ denotes the ESD of $A_n^{(m)}/\sqrt n$, then exactly same arguments as those in the proof of Theorem \ref{inv.main} show that
\begin{equation}\label{ext.t1.eq1}
 \lim_{m\to\infty}\limsup_{n\to\infty}E\left[L^3\left(\mu_n^{(m)},\mu_n\right)\right]=0\,,
\end{equation}
where $L$ denotes the L\'evy distance. Clearly,
\begin{equation}\label{ext.t1.eq4}
\lim_{m\to\infty}R^{(m)}(0,0)=R(0,0)=1\,. 
\end{equation}
Fix $m$ large enough so that $R^{(m)}(0,0)>0$. By Theorems \ref{gaussian.t1} and \ref{example.t1}, it follows that
\begin{equation}\label{ext.t1.eq2}
 L\left(\mu_n^{(m)},\mu_{r_m}\boxtimes\mu_s\right)\prob0\,,
\end{equation}
as $n\to\infty$, where 
\[
 r_m(x):=\frac1{\sqrt{R^{(m)}(0,0)}}\sum_{n=-\infty}^\infty R^{(m)}(n,0)e^{-inx},\,-\pi\le x\le\pi\,,
\]
and $\mu_{r_m}$ denotes the law of $r_m(U)$, $U$ being an Uniform $(-\pi,\pi)$ random variable. Notice that in the sum on the right hand side, only finitely many terms are non-zero. In view of \eqref{ext.t1.eq1} and \eqref{ext.t1.eq2}, to complete the proof it suffices to show that
\begin{equation}\label{ext.t1.eq3}
 \lim_{m\to\infty}L\left(\mu_s\boxtimes\mu_{r_m},\mu_s\boxtimes\mu_r\right)=0\,.
\end{equation}

To that end, define
\[
 \tilde r_m(x):=\sum_{n=-\infty}^\infty R^{(m)}(n,0)e^{-inx}=\left(\sum_{k=-m}^m c_ke^{-ikx}\right)^2,\,-\pi\le x\le\pi\,.
\]
By \eqref{ext.l1.eq3}, it follows that
\[
 \lim_{m\to\infty}\int_{-\pi}^\pi\left|\tilde r_m(x)-r(x)\right|dx=0\,,
\]
which along with \eqref{ext.t1.eq4}
ensures that
\[
 \mu_{r_m}\weak\mu_r,\text{ as }m\to\infty\,.
\]

Define the map $\sqrt\cdot$ from the space of non-negative probability measures to that of symmetric probability measures as follows. If a non-negative probability measures $\nu$ is the law of a random variable $V$, then $\sqrt\nu$ is the law of $\epsilon\sqrt V$, where $\epsilon$ takes values $+1$ and $-1$ each with probability $1/2$ independently of $V$. Let $\cdot^2$ denote the inverse of $\sqrt\cdot$ . 
By Corollary 6.7 of \cite{bercovici:voiculescu:1993}, it follows that
\[
 \mu_s^2\boxtimes\mu_{r_m}^2\weak\mu_s^2\boxtimes\mu_r^2,\text{ as }m\to\infty\,.
\]
Lemma 8 of \cite{arizmendi:abreu:2009} tells us that for a symmetric probability measure $\nu_1$ and a non-negative probability measure $\nu_2$ such that $\nu_1(\{0\})\vee\nu_2(\{0\})<1$,
\[
 \nu_1\boxtimes\nu_2 =\sqrt{\nu_1^2\boxtimes\nu_2^2}\,.
\]
This shows \eqref{ext.t1.eq3} which, along with \eqref{ext.t1.eq1} and \eqref{ext.t1.eq2}, completes the proof.
\end{proof}

Next, let us see two examples where Theorem \ref{ext.t1} applies.

\noindent{\bf Example 5.} Let
\[
 R(u,v):=\frac{\sin u}u\frac{\sin v}v,\,u,v\in\bbz\,,
\]
where $(\sin 0)/0$ is to be interpreted as $1$. Clearly,
\[
 (2\pi)^{-1}\int_{-\pi}^\pi e^{inx}\pi\one(|x|\le1)dx=R(n,0),\,n\in\bbz\,.
\]
Therefore, by Theorem \ref{ext.t1} and the fact that the free product convolution of Bernoulli ($p$) and WSL is same as the classical product convolution of $\sqrt p$ times  Bernoulli ($p$) and WSL, it follows that the LSD in this example is the law of $\Pi W$, where $W$ follows WSL, and $\Pi$ takes the values $\sqrt\pi$ and $0$ with probabilities $1/\pi$ and $1-1/\pi$ respectively, independently of $W$ {\bf in the classical sense}.
Note that for this example, even though
\[
 \sum_{u,v}|R(u,v)|=\infty\,,
\]
the LSD is still compactly supported.

\noindent{\bf Example 6.} Define
\[
 r(x):=\frac{\sqrt\pi}2|x|^{-1/2}\one(x\neq0),\,-\pi\le x\le\pi\,,
\]
and
\[
 R(u,v):=\left((2\pi)^{-1}\int_{-\pi}^\pi e^{iux}r(x)dx\right)\left((2\pi)^{-1}\int_{-\pi}^\pi e^{ivx}r(x)dx\right),\,u,v\in\bbz\,.
\]
The fact that 
\[
 \int_{-\pi}^\pi r(x)dx=2\pi\,,
\]
ensures that the assumptions \eqref{ext.assume1} and \eqref{ext.ac} hold. Therefore, the LSD in this case is $\mu_r\boxtimes\mu_s$, where $\mu_r$ is the law of $\frac{\sqrt\pi}2|U|^{-1/2}$ and $U$ follows Uniform$(-\pi,\pi)$. The following proposition shows that the fourth moment of the LSD is infinite, which means, in particular, that the largest eigenvalue of $A_n/\sqrt n$ goes to infinity in probability.

\begin{prop}
 For any non-negative probability measure $\nu$ and integer $k\ge1$,
\begin{equation}\label{prop.eq1}
 \int_\bbr x^{2k}\mu_s\boxtimes\nu(dx)=\infty\,,
\end{equation}
if and only if
\begin{equation}\label{prop.eq2}
 \int_\bbr x^{k}\nu(dx)=\infty\,.
\end{equation}
\end{prop}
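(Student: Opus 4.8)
The plan is to reduce the statement to the moment formula \eqref{eq.moments}. Writing $l_1^\sigma,\dots,l_{k+1}^\sigma$ for the block sizes of the Kreweras complement $K(\sigma)$ of $\sigma\in NC_2(2k)$, that formula (with $\mu_r$ replaced by $\nu$, and using commutativity of $\boxtimes$) reads, for \emph{compactly supported} non-negative $\nu$,
\[
 \int_\bbr x^{2k}(\mu_s\boxtimes\nu)(dx)=\sum_{\sigma\in NC_2(2k)}\prod_{j=1}^{k+1}\int_\bbr x^{l_j^\sigma}\nu(dx)=:M(\nu)\,.
\]
My first step is to show that this identity survives, as an equality in $[0,\infty]$, for an arbitrary non-negative $\nu$; granting this, the proposition reduces to a counting statement about the exponents $l_j^\sigma$.

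To pass to possibly unbounded $\nu$ I would truncate. Let $\nu_R$ be the law of $X\wedge R$ for $X\sim\nu$; then $\nu_R$ is supported on $[0,R]$, $\nu_R\weak\nu$, and $\int x^l\nu_R\uparrow\int x^l\nu$ by monotone convergence, so $M(\nu_R)\uparrow M(\nu)$. As in the proof of Theorem~\ref{ext.t1}, writing $\mu_s\boxtimes\nu_R=\sqrt{\mu_s^2\boxtimes\nu_R^2}$ via Lemma~8 of \cite{arizmendi:abreu:2009} and invoking the weak continuity of $\boxtimes$ on $[0,\infty)$ (Corollary~6.7 of \cite{bercovici:voiculescu:1993}) gives $\mu_s\boxtimes\nu_R\weak\mu_s\boxtimes\nu$. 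Since $x\mapsto x^{2k}$ is non-negative and continuous, the portmanteau inequality yields $\int x^{2k}(\mu_s\boxtimes\nu)\le\liminf_R M(\nu_R)=M(\nu)$. The reverse inequality is the main obstacle, since weak convergence only controls moments from below. To obtain $\int x^{2k}(\mu_s\boxtimes\nu)\ge M(\nu)$ I would realise $\mu_s\boxtimes\nu$ as the distribution of $d^{1/2}sd^{1/2}$ in a tracial $W^*$-probability space, with $s$ semicircular and $d\ge0$ (affiliated, possibly unbounded) of law $\nu$ free from $s$, so that $d_R:=d\wedge R$ has law $\nu_R$. Writing $d=d_R+(d-R)_+$ and using traciality to express the $2k$-th moment as $\tau\bigl((sd)^{2k}\bigr)$, I expand into $2^{2k}$ terms $\tau(sc_1\cdots sc_{2k})$ with $c_i\in\{d_R,(d-R)_+\}$; freeness of $s$ from the commutative algebra generated by $d$ turns each such term, via the semicircular moment--cumulant formula, into $\sum_{\sigma\in NC_2(2k)}\prod_{V\in K(\sigma)}\tau\bigl(\prod_{i\in V}c_i\bigr)\ge0$, as the $c_i$ are commuting positive operators. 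Hence $\tau((sd)^{2k})$ dominates its all-$d_R$ term $\tau((sd_R)^{2k})=M(\nu_R)$ for every $R$, and letting $R\to\infty$ gives $\ge M(\nu)$. The only delicate point is justifying this expansion for the unbounded summand $(d-R)_+$, which is handled within the affiliated-operator calculus underlying \cite{bercovici:voiculescu:1993}; all quantities stay in $[0,\infty]$.

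With the identity $\int x^{2k}(\mu_s\boxtimes\nu)=M(\nu)$ in hand, the rest is elementary. For each $\sigma\in NC_2(2k)$ the complement $K(\sigma)$ has exactly $k+1$ nonempty blocks with $\sum_{j=1}^{k+1}l_j^\sigma=2k$; since the remaining $k$ blocks already contribute at least $k$, every block satisfies $l_j^\sigma\le k$. On the other hand, for $\sigma_0=\{(1,2),(3,4),\dots,(2k-1,2k)\}$ the points $\overline2,\overline4,\dots,\overline{2k}$ form a single block of $K(\sigma_0)$ of size $k$ while $\overline1,\overline3,\dots,\overline{2k-1}$ are singletons, so the $\sigma_0$-summand of $M(\nu)$ equals $\bigl(\int x\,\nu\bigr)^{k}\int x^{k}\nu$, exhibiting a factor $\int x^k\nu$.

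Finally I assemble the equivalence, using that for a non-negative measure $\int x^p\nu=\infty$ forces $\int x^q\nu=\infty$ for all $q\ge p$ (compare on $\{x\ge1\}$). If \eqref{prop.eq1} holds, i.e.\ $M(\nu)=\infty$, then some factor $\int x^{l_j^\sigma}\nu$ is infinite; since $l_j^\sigma\le k$ this gives $\int x^k\nu=\infty$, which is \eqref{prop.eq2}. Conversely, if \eqref{prop.eq2} holds then $\nu\neq\delta_0$, so $\int x\,\nu>0$, and the $\sigma_0$-summand $\bigl(\int x\,\nu\bigr)^k\int x^k\nu$ is infinite, forcing $M(\nu)=\infty$, which is \eqref{prop.eq1}. (If $\nu=\delta_0$ both moments vanish and neither side holds, so the equivalence is trivially true.) This completes the plan.
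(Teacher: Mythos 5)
Your overall architecture is sound, and two of your three ingredients coincide with the paper's: the upper bound $\int_\bbr x^{2k}\,(\mu_s\boxtimes\nu)(dx)\le M(\nu)$, obtained from $\mu_s\boxtimes\nu_R\weak\mu_s\boxtimes\nu$ together with lower semicontinuity of the $2k$-th moment under weak convergence, is exactly the paper's ``only if'' argument (phrased there via Skorohod embedding and Fatou's lemma), and your combinatorial observations --- every block of a Kreweras complement of an element of $NC_2(2k)$ has size at most $k$, while $\sigma_0=\{(1,2),\ldots,(2k-1,2k)\}$ produces the summand $\left(\int x\,\nu(dx)\right)^k\int x^k\,\nu(dx)$ --- are the same ones the paper uses.

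The gap is in your lower bound $\int_\bbr x^{2k}\,(\mu_s\boxtimes\nu)(dx)\ge M(\nu_R)$, which is the step that actually drives the implication \eqref{prop.eq2}$\,\Rightarrow\,$\eqref{prop.eq1}. You expand $\tau\bigl((sd)^{2k}\bigr)$ with $d=d_R+(d-R)_+$ into $2^{2k}$ terms and claim each term $\tau(sc_1\cdots sc_{2k})$ equals $\sum_{\sigma\in NC_2(2k)}\prod_{V\in K(\sigma)}\tau\bigl(\prod_{i\in V}c_i\bigr)\ge0$. For bounded $c_i$ this is the standard semicircular moment--cumulant formula, but every term containing $(d-R)_+$ involves an unbounded affiliated operator which, in precisely the regime you need (namely \eqref{prop.eq2}), has infinite moments. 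There the left-hand side is a trace of a non-self-adjoint, non-positive unbounded product and is not a priori well defined in $[0,\infty]$; the moment--cumulant identity is not established in that generality; and the additivity of $\tau$ over your $2^{2k}$ terms, needed to conclude that the all-$d_R$ term is dominated by the total, presupposes exactly the positivity and well-definedness being asserted. You cannot sidestep this by truncating both factors, since weak convergence controls moments only from below. So the step as written is an assertion, not a proof. The paper closes precisely this point by invoking Proposition 4.15 of \cite{bercovici:voiculescu:1993}: free multiplicative convolution of measures on $[0,\infty)$ preserves stochastic domination, so $\mu_s^2\boxtimes\nu_R^2$ is dominated by $\mu_s^2\boxtimes\nu^2$, whence
\[
\int_\bbr x^{2k}\,(\mu_s\boxtimes\nu)(dx)=\int_\bbr x^{k}\,(\mu_s^2\boxtimes\nu^2)(dx)\ge\int_\bbr x^{k}\,(\mu_s^2\boxtimes\nu_R^2)(dx)\ge\left(\int_\bbr x\,\nu_R(dx)\right)^k\int_\bbr x^k\,\nu_R(dx)\,,
\]
and letting $R\to\infty$ finishes the ``if'' direction. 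Substituting that citation for your operator expansion repairs the argument; everything else in your proposal stands.
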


\begin{proof}
 We start with the ``if'' part, that is, assume \eqref{prop.eq2}. Let $X$ be a random variable whose law is $\nu$. For $n\ge1$, let $\nu_n$ denote the law of $X\wedge n$. In what follows, all integrals are on the whole of $\bbr$. By \eqref{eq.moments}, considering the element $\{(1,2),\ldots,(2k-1,2k)\}$ of $NC_2(2k)$, it follows that
\[
 \int x^{2k}\mu_s\boxtimes\nu_n(dx)\ge\left(\int x\nu_n(dx)\right)^k\int x^k\nu_n(dx)\,.
\]
Proposition 4.15 of \cite{bercovici:voiculescu:1993} implies that $\mu_s^2\boxtimes\nu_n^2$ is dominated by $\mu_s^2\boxtimes\nu^2$, and hence
\begin{eqnarray*}
\int x^{2k}\mu_s\boxtimes\nu(dx)&=& \int x^{k}\mu_s^2\boxtimes\nu^2(dx)\\
&\ge&\int x^{k}\mu_s^2\boxtimes\nu_n^2(dx)\\
&\ge&\left(\int x\nu_n(dx)\right)^k\int x^k\nu_n(dx)\\
&\to&\left(\int x\nu(dx)\right)^k\int x^k\nu(dx)=\infty\,,
\end{eqnarray*}
the limit in the last line following from the monotone convergence theorem. This establishes the ``if'' part.

For the ``only if'' part, assume that
\[
 \int_\bbr x^{k}\nu(dx)<\infty\,.
\]
Define $\nu_n$ as before for $n\ge1$. For $\sigma\in NC_2(2k)$, let $l_1^{\sigma},\ldots,l_{k+1}^{\sigma}$ be as in \eqref{eq.moments}. By the Skorohod embedding and Fatou's lemma, it follows that
\begin{eqnarray*}
 \int x^{2k}\mu_s\boxtimes\nu(dx)&\le&\liminf_{n\to\infty} \int x^{2k}\mu_s\boxtimes\nu_n(dx)\\
&=&\liminf_{n\to\infty}\sum_{\sigma\in NC_2(2k)}\prod_{j=1}^{k+1}\int x^{l_j^{\sigma}}\nu_n(dx)\\
&=&\sum_{\sigma\in NC_2(2k)}\prod_{j=1}^{k+1}\int x^{l_j^{\sigma}}\nu(dx)\\
&<&\infty\,,
\end{eqnarray*}
the inequality in the last line following from the observation that $l_j^{\sigma}\le k$ for all $\sigma\in NC_2(2k)$ and $1\le j\le k+1$. This completes the proof of the ``only if'' part.
\end{proof}

\section*{Acknowledgement} 
The authors are grateful to Parthanil Roy for  helpful discussions.


\end{document}